\newtheorem{thm}{Theorem}[section]
\newtheorem{cor}{Corollary}[section]
\newtheorem{lem}{Lemma}[section]
\newtheorem{pro}{Proposition}[section]
\theoremstyle{definition}
\newtheorem{defi}{Definition}[section]
\newtheorem{exam}{Example}[section]
\newtheorem{remark}{Remark}[section]
\numberwithin{equation}{section}
\def\epi{\mbox{\rm epi}\,}
\def\gph{\mbox{\rm gph}\,}
\def\dom{\mbox{\rm dom}\,}
\def \N{\mathbb{N}}
\def \R{\mathbb{R}}
\def \B{\mathbb{B}}
\begin{document}
\setcounter{page}{1}

\vspace*{1.0cm}
\title[Non-subdifferentiability Optimality  and mean value theorems]
{Non-subdifferentiability Optimality  and mean value theorems via new relative subdifferentials}
\author[V.D. Thinh, T.D. Chuong, X. Qin]{ Vo Duc Thinh$^{1,2}$, Thai Doan Chuong$^{3}$, Xiaolong Qin$^{1,4,*}$}
\maketitle
\vspace*{-0.6cm}

\begin{center}
{\footnotesize {\it

$^1$School of Mathematical Sciences, Zhejiang Normal University, Jinhua 321004, China\\
$^2$Dong Thap University, Cao Lanh 870000, Dong Thap, Vietnam\\
$^3$Department of Mathematics, Brunel University London, London, England\\
$^4$Nanjing Center for Applied Mathematics, Nanjing, China\\
}
}
\end{center}

\vskip 4mm {\small\noindent {\bf Abstract.}
Motivated by the optimality principles for non-subdifferentiable optimization problems, we  introduce new relative subdifferentials and examine some properties for relatively lower semicontinuous functions including  $\epsilon$-regular subdifferential and  limiting subdifferential relative to a set. The fuzzy sum rule for the relative $\epsilon$-regular subdifferentials and the sum rule for the relative limiting subdifferentials  are established. We utilize these relative subdifferentials to establish optimality conditions for non-subdifferentiable optimization problems under  mild constraint qualifications. Examples are given to demonstrate that the  optimality conditions obtained work better and sharper than some existing results. We also provide different versions of mean value theorems via the relative subdifferentials and employ them to characterize the equivalences between the convexity relative to a set and the monotonicity of the relative subdifferentials of a non-subdifferentiable function.

\noindent {\bf Keywords.}
Locally lipschitz relative to a set;  Limiting subdifferential relative to a set; Mean value theorem; Monotonicity; Optimality condition.

\noindent {\bf 2020 Mathematics Subject Classification.} 49J53, 90C30, 90C31.}

\renewcommand{\thefootnote}{}
\footnotetext{ $^*$Corresponding author.
\par
E-mail address:
thinhvd.k4@gamil.com; vdthinh@dthu.edu.vn (V.D. Thinh), chuong.thaidoan@brunel.ac.uk (T.D. Chuong), qxlxajh@163.com (X. Qin).
 }

\section{Introduction}\label{intro}
When it comes to dealing with non-differential functions and nonsmooth optimization problems,  generalized differentials and subdifferentials  always play a crucial role  in  analyzing and  establishing foundational calculus rules and tools for variational analysis and  nonsmooth optimization theory. Basically, each type of generalized subdifferentials or differentials is  well-defined and suitable for a certain class of non-differential functions, such as the directional derivative and convex subdifferential for the convex functions \cite{Roc96}, the Clarke  derivative and subdifferential for the locally Lipschitz functions \cite{CLSW98,Clarke1983}, and the basic subdifferential for the regular functions \cite{Mor06,Mor2018}. The generalized subdifferentials and  differentials including the convex subdifferentials, derivatives  and coderivatives are efficient tools not only  to establish necessary and sufficient optimality conditions for nonsmooth optimization problems but also to characterize the stability properties for functions  and set-valued mappings; see, e.g., \cite{CLSW98,Clarke1983, Mor93, Mor94, Mor06,Mor2018, Roc96} and the references therein.

A related research aspect when studying the subdifferentials, which has also received much attention from researchers,  is the establishment of {\it mean value/Lagrange-type} theorems  by using different types of generalized subdifferentials and differentials. For examples, the authors in \cite{Hir1980, Wegge1974} established  mean value theorems for the convex functions and the authors in \cite{Clarke1983,CL1994,Lebourg1975,Zaorodny1988,Zaorodny1990} provided versions of the mean value theorems for the class of Lipschitz functions by way of the Clarke subdifferential. For the class of lower semicontinuous functions, the authors in \cite{Penot1997,Penot1988}  stated mean value theorems by adopting lower derivative and lower contingent subdifferential, while the authors \cite{ACL1995} examined mean value theorems by using an abstract subdifferential. Other versions of mean value theorems for the lower semicontinuous functions through the Fr\'{e}chet subdifferential    can be found in \cite{Mor1988,Mor06,Mor2018,Trang2012}.
Using a mean value theorem as an efficient tool to establish  equivalent characterizations for Asplund spaces was given in \cite{Trang2012}.
The interested reader  is referred to \cite{ACL1995,Mor1988,Mor06,Mor2018,PS1997,TZ1995} for the  applications of mean value theorems to characterize generalized convex functions.

Let us mention some recent developments of generalized subdifferentials. The authors in \cite{MWY2023b,MWY2023a} presented some types of coderivatives relative to a set, corresponding sum rules and relatively  extremal principle. They  used them as a tool to characterize necessary and sufficient conditions for the Aubin property/ Lipschitz-like and metric regularity relative to a set  of multifunctions and the solution mapping of variational inequality systems.  Independently, the authors in \cite{TQY01-ZJNU,TQY02-ZJNU} introduced different versions of coderivatives and subdifferentials with relative to a set of multifunctions and applied them to provide equivalent characterizations for Aubin property/Lipschitz-like and metric regularity relative to a set of multifunctions as well as the locally Lipschitz continuity and metric regularity of functions. Moreover, optimality conditions via the proposed generalization for specific optimization problems were also  established in those papers.

In this paper, we mainly focus on the following non-subdifferentiable sum optimization problem:
\begin{equation}\label{SP} \tag{SP}
	\min \{f_1(x)+f_2(x)\mid  x\in \mathcal C\},
\end{equation} where  $\mathcal C$ is a nonempty closed and convex set in a Banach space $X$, $f_1: X\to\bar\R:=\R\cup\{\pm \infty\}$ is locally Lipschitz  relative to  $\mathcal C$,  and  $f_2: X\to\bar\R$ is lower semicontinuous relative to $\mathcal C$ around the reference point (cf.~their definitions  in Section~2). It is worth noticing that, in a special case where $f_1$ is a convex function and $f_2$ is a concave function,  problem~\eqref{SP} collapses  to a popular class of DC optimization problems  (see, e.g., \cite{DNV2010,HT1999,DCtao-an-97}).

Motivating by optimality conditions for   non-subdifferentiable optimization problem~\eqref{SP}, we first  introduce  new types of relative subdifferentials called   {\it $\epsilon$-regular subdifferential} and {\it  limiting subdifferential relative to a set} for relatively lower semicontinuous functions and explore their properties. We then employ  these relative subdifferentials to establish optimality conditions for the non-subdifferentiable optimization problem~\eqref{SP} under  weak constraint qualifications. We also give examples  to illustrate  that the optimality conditions obtained  in this paper work better and sharper than some existing results; see, e.g., \cite[Theorem~8.15]{Roc98} and \cite[Proposition~5.3]{Mor06}. In this vein, we examine different versions of mean value theorems via the relative subdifferentials and utilize   them to  characterize the equivalences  between the convexity relative to a set and the monotonicity of the relative subdifferentials of the non-subdifferentiable functions. In addition, we present the fuzzy sum rule for $\epsilon$-regular subdifferentials and the sum rule for the limiting subdifferentials relative to a set.

The rest of the paper is organized as follows. In Section~2, we recall notations and basic results from variational analysis. Section 3 is devoted to   introducing  the concepts of relative subdifferentials and providing their calculus rules.  In Section~4, we establish optimality conditions for problem~\eqref{SP}.  We also present exact and approximate sum rules for the relative subdifferentials. Section~5  provides some versions of mean value theorems via the relative subdifferentials. In this section, we also  establish the equivalence of the convex property on a set of a function  and the monotonicity of subdifferentials relative to a set. The last section, Section~6, summarizes the main results and provides research perspectives.

\section{Preliminaries and Tools of Variational Analysis}

Throughout this paper, the notation $\N$ and $\R$ are the natural numbers set and the real numbers set respectively, while $\bar{\R}:=\R\cup\{\pm\infty\}$ (with convention $\infty-\infty=\infty$) and $\R_+:=[0,\infty).$ Consider a Banach space $X$ and its topological dual $X^*.$ We say that $X$ is reflexive if $X$ coincides under the canonical imbedding with $X^{**}:=(X^*)^*$. We denote by $\Vert \cdot\Vert$ and $\Vert \cdot\Vert_*$ the norms of $X$ and $X^*$ respectively. A closed  ball in a space  centered at $z$ with radius $r>0$  is denoted by $\B(z,r)$.  The closed unit balls of $X$ and $X^*$ are denoted by $\B_X$ and $\B_{X^*}$ respectively, while $\B:=[-1,1]\subset \R.$ For a pair $(x,x^*)\in X\times X^*,$ the symbol $\langle x^*, x\rangle$ indicates the canonical pairing between $X$ and $X^*.$ The notation $x_k\to x$ and $x_k\rightharpoonup x$ denote the strong convergence and weak convergence of a sequence $\{x_k\}$  to $x$, respectively, where $k\in \N:=\{1,2,...\}$. Denote $x_k^*\rightharpoonup^* x^*$ for the weak$^*$ convergence of a sequence $\{x_k^*\}$  to $x^*$  in $X^*.$ Given $\Omega\subset X,$ we use the notion $x_k\xrightarrow{\Omega}\bar x$ to say that $x_k\to \bar x$ and $x_k\in \Omega$ for all $k\in \N$ while $x_k\xrightarrow[f]{\Omega}\bar x$ means that $x_k\xrightarrow{\Omega}\bar x$ and $f(x_k)\to f(\bar x)$. Let $(t_k)$ be a real number sequence. We denote $t_k\downarrow 0$ if $t_k\to 0$ and $t_k\ge t_{k+1}> 0$ for all $k\in \N,$ while the notion $t_k\to 0^+$ means that $t_k\to 0$ and $t_k> 0$ for all $k\in \N.$ The metric projection of a set $\Omega\subset X\times Y$ onto $X$ is signified by $\Pi_{X}(\Omega).$

A set-valued mapping $F: X\rightrightarrows X^*$ is called {\it monotone} if
$$\langle x^*-u^*, x-u\rangle\ge \Vert x-u\Vert\; \text{ \rm for all } x,u\in X \text{ \rm and } x^*\in F(x), y^*\in F(y).$$ The mapping $J: X\rightrightarrows X^*$ defined by $$J(x):=\left\{x^*\in X^*\mid \langle x^*, x\rangle = \Vert x\Vert^2=\Vert x^*\Vert_*^2\right\},\; \, x\in X,$$
is called the {\it dual mapping} from $X$ to $X^*.$ It is known that $J(\lambda x)=\lambda J(x)$ for any $x\in X$ and $\lambda\in \R.$ Note that $J(x)\ne \emptyset$ for all $x\in X$ due to \cite[Proposition~2.12]{Bonnans2000}.

From now on, we assume unless otherwise stated that $X$ is a reflexive Banach space, which is equipped a norm $\Vert \cdot\Vert$, such that both spaces $X$ and $X^*$ are locally uniformly convex and the norm $\Vert \cdot\Vert$ is Fr\'{e}chet differentiable at any nonzero point of $X$. In this case, we have the following properties:
\begin{itemize}
    \item the weak convergence coincides with the weak$^*$ convergence.
    \item the unit ball is weakly sequential compact on $X^*$, i.e., for any sequence $\{b_k\}\subset \B_{X^*},$ there exists a subsequence $\{b_{k_s}\}$ of $\{b_k\}$ with $b_{k_s}\rightharpoonup b$ for some $b\in \B_{X^*}.$ Consequently, for every bounded sequence on $X^*$, there is a weak convergence subsequence on $X^*$.
    \item the norm function is weakly lower semicontinuous on $X^*$, i.e., for any sequence $x_k^*\rightharpoonup  x^*$ with $x_k^*, x^*\in X^*$ for all $k\in \N$, we have $\liminf_{k\to\infty}\Vert x_k^*\Vert\ge \Vert x^*\Vert$.
    \item the duality mapping $J$ is single-valued and bicontinuous.
\end{itemize}

\begin{defi} \label{tangent-cone} {\rm Let $\Omega$  be a nonempty and locally closed  set in a Banach space $X$.

    {\rm (i)} \cite[Definition~6.1]{Roc98} A vector $v\in X$ is a {\it tangent vector} to the set $\Omega$ at $\bar x\in \Omega$, denoted by $v\in T(\bar x, \Omega),$ if there exist sequences $t_k\to 0^+$ and $v_k\to v$ such that $\bar x+t_kv_k\in \Omega$ for all $k\in \N$, where  $T(\bar x, \Omega)$ is called the {\it tangent cone} to $\Omega$ at $\bar x.$

    {\rm (ii)}  \cite[Definition~1.1~(i)]{Mor06} Given $\epsilon\ge 0$,  the $\epsilon$-{\it normal set} to $\Omega$ at $\bar x\in \Omega$, denoted by $\widehat N_{\epsilon}(\bar x,\Omega)$, is defined by
    \begin{equation*}
        \label{e-normal}
        \widehat N_{\epsilon}(\bar x,\Omega)=\left\{x^*\in X^*\mid \limsup_{x\xrightarrow{\Omega}\bar x}\dfrac{\langle x^*, x-\bar x\rangle}{\Vert x-\bar x\Vert}\le \epsilon\right\}.
    \end{equation*}
    When $\epsilon=0,$ $\widehat N_{0}(\bar x,\Omega):=\widehat N(\bar x,\Omega)$ is called the regular/Fr\'echet normal cone to $\Omega$ at $\bar x$.
    }
\end{defi}

\begin{remark}
It is known from \cite[page 6]{Mor06} that
\begin{equation}\label{rem21-eq1}
\widehat{N}_{\epsilon}(\bar x,\Omega)\supset \widehat{N}(\bar x,\Omega)+\epsilon\B_{X^*}
\end{equation}
for any $\epsilon\ge 0$ and $\bar x\in \Omega\subset X.$  Inclusion \eqref{rem21-eq1} becomes the equality if $\Omega$ is convex. In this case, we have $\widehat{N}_{\epsilon}(\bar x,\Omega)=\left\{x^*\in X^*\mid \langle x^*,x-\bar x\rangle\le \epsilon\Vert x-\bar x\Vert\; \forall x\in \Omega\right\}.$
\end{remark}

Given a function $f: X\to \bar{\R},$ the domain and epigraph of $f$ are respectively denoted by
$$\dom f:=\left\{ x\in X\mid f(x)<\infty\right\}\;  \text{ \rm and }\; \epi f:=\left\{(x,y)\in X\times\R\mid y\ge f(x)\right\}.$$ The function $f$ is {\it proper} if $f(x)>-\infty$ for all $x\in X$ and $\dom f\ne \emptyset$, and it is {\it lower semicontinuous (lsc)  at} $\bar x\in \dom f$ if $\liminf_{x\to \bar x}f(x)\ge f(\bar x).$  Function $f$ is called {\it lsc} if $f$ is lsc at any $x\in \dom f$, while  $f$ is {\it lsc around} $\bar x$ if it is lsc at any $x$ in a neighborhood of $\bar x.$ We say that $f$ is {\it lsc relative to a set $\Omega\subset X$ at} $\bar x\in \Omega\cap\dom f$ if  $\liminf_{x\xrightarrow{\Omega}\bar x}f(x)\ge f(\bar x).$  Similarly,   $f$ is called {\it lsc relative to $\Omega$} if it is  lsc relative to $ \Omega$ at  any $x\in  \Omega\cap \dom f$, and   $f$ is  {\it lsc relative to $\Omega$ around} $\bar x$ if there exists a neighborhood $U$ of $\bar x$ such that $f$ is lsc relative to $ \Omega$ at any $x\in U\cap \Omega$. Then $f$  is called {\it convex on} $\Omega\subset \dom f$ if the following inequality holds:
$f(\lambda x+(1-\lambda)y)\le \lambda f(x) +(1-\lambda) f(y)$ for all $x,y\in  \Omega$ and $\lambda\in [0,1].$ It is easy to see that $f$ is convex on $ \Omega\subset \dom f$ if and only if $f_{\Omega}$ is convex. We say that $f$ is {\it continuous relative to} $\Omega$ if $f$ is finite on $\Omega$ and $\lim_{u\xrightarrow{\Omega}x}f(u)=f(x)$  for any $x\in \Omega$. It is clear that if $f$ is continuous relative to $\Omega$ then it is lsc relative to $\Omega.$

\begin{defi}  \label{lips-vec}
	{\rm (i) Let $\Omega\subset X$ be a nonempty and locally closed, and consider  $f:X\to\bar\R$ and $\bar x\in \Omega\cap \dom f.$   $f$ is said  to be {\it locally Lipschitz  relative to} $\Omega$ around $\bar x$ with a modulus $\ell\ge 0$ if there exists  a neighborhood $U$ of $\bar x$ such that
		\begin{equation}\label{lips-con}
			\vert f(x)-f(u)\vert\le \ell\Vert x-u\Vert,\quad \forall x,u\in U\cap \Omega.
		\end{equation} We denote by ${\rm lip}_{\Omega}f(\bar x)$ the infimum of all of $\ell\ge 0$ satisfying \eqref{lips-con}.\\
	(ii) \cite[ Definition~3.1]{MWY2023b}
	Given  $\Omega\subset X$,  a set-valued map $F:X\rightrightarrows Y$  is said  to have the {\it Aubin property relative to} $\Omega$ around $(\bar x,\bar y)\in \gph F:=\left\{(x,y)\in X\times Y\mid y\in F(x)\right\}$ with a modulus $\ell\ge 0$ if $\gph F$ is locally closed around $(\bar x,\bar y)$ and there exist neighborhoods $U$ of $\bar x$ and  $V$ of $\bar y$ such that
		\begin{equation*}%\label{Aubin-pro}
			F(u)\cap V\subset F(x) +\ell\Vert u-x\Vert \mathbb B_Y,\quad  \forall x,u\in \Omega\cap U.
		\end{equation*}
	}
\end{defi}

\begin{defi} {\rm (i) \cite[Definition~6.1]{MWY2023a} Let $\Omega_i\subset X\times Y, i=1,2$ and $C\subset X$ be nonempty. A pair $(\bar x,\bar y)\in \Omega_1\cap\Omega_2$ is called a {\it local extremal point} of $\{\Omega_1,\Omega_2\}$ {\it relative to} $C$ if $\Pi_{X}(\Omega_1\cup\Omega_2)\subset C$ and there exist a sequence $\{b_k\}$ in $Y$ with $b_k\to 0$ and  a neighborhood $U$ of $(\bar x,\bar y)$ such that $$[\Omega_1+(0,b_k)]\cap\Omega_2\cap U=\emptyset\; \text{ \rm for all } k\in \N.$$
(ii) \cite[Definition~1.83]{Mor06}    \label{T03-def-subgradients}  Let $\epsilon\ge 0$ and
      $f: X \rightarrow \bar{\mathbb{R}}$ be finite at $\bar x\in X$. The {\it geometric $\epsilon$-subdifferential} of $f$ at $\bar{x}$ is defined by
\begin{align*}
    \widehat\partial_{\epsilon}f(\bar x):=\left\{x^*\in X^*\mid (x^*,-1)\in \widehat N_{\epsilon}((\bar x,f(\bar x)),\epi f)\right\}.
\end{align*}
 We put $\widehat\partial_{\epsilon}f(\bar x):=\emptyset$ if $|f(\bar x)|=\infty.$
 }
\end{defi}

\section{Relative Subdifferentials and Their Properties}

This section is devoted to introducing  the concepts of {\it relative subdifferentials} and examining some their properties. In what follows, we use the notation $$f_{\Omega}(x):=\begin{cases}
	f(x)&\text{ \rm if } x\in \Omega,\\
	\infty&\text{ \rm otherwise},
\end{cases}$$ where $\Omega\subset X$ is a nonempty set  and  $f: X\to\bar{\R}$ is a function.
\begin{defi}[Relative subdifferentials]
    \label{T03-def-2}
    {\rm Let  $f: X \rightarrow \bar{\mathbb{R}}$ be finite at $\bar{x}\in \Omega$  and $\Omega\subset X$ be locally closed around $\bar{x}$.

     {\rm (i)} Given $\epsilon\ge 0$, the {\it $\epsilon$-regular subdifferential relative to} $\Omega$ of $f$  at $\bar{x}$   is defined by
\begin{align}\label{prop1-eq1}
    \widehat{\partial}^{\epsilon}_{\Omega}f(\bar x):=\widehat{\partial}_{\epsilon}f_{\Omega}(\bar x)\cap J(T(\bar x,\Omega)).
\end{align}
The $0$-regular subdifferential relative to $\Omega$  of $f$ at $\bar x$ is simply called {\it regular subdifferential relative to} $\Omega$ of $f$ at $\bar x$,  and it is denoted by $\widehat{\partial}_{\Omega}f(\bar x).$

{\rm (ii)} The {\it limiting subdifferential relative to} $\Omega$ of $f$ at $\bar x$ is defined by
\begin{align}\label{prop1-eq2}
    \partial_{\Omega}f(\bar x):=\left\{x^*\in X^*\mid\exists \epsilon_k\downarrow 0, x_k\xrightarrow[f]{\Omega}\bar x, x_k^*\rightharpoonup^* x^* \text{ \rm with } x_k^*\in \widehat{\partial}^{\epsilon_k}_{\Omega}f(x_k)\; \forall k\in \N\right\}.
\end{align}

If $\bar x\notin \Omega$ or $|f(\bar x)|=\infty,$ we put $ \widehat{\partial}^{\epsilon}_{\Omega}f(\bar x):=\partial_{\Omega}f(\bar x)=\emptyset$ for any $\epsilon\ge 0.$
}
\end{defi}

\begin{remark}\label{rem31}
(i) In the current setting, $X$ is assumed to be a reflexive Banach space, so  the weak and weak* convergence  are equivalent. In this case,  the weak* convergent sequence $\{x_k^*\}$ in \eqref{prop1-eq2} can be replaced by a weak convergent sequence to $x^*.$ \\
(ii) By definition, we always have \begin{align}\label{sub-relation}
	\widehat{\partial}_{\Omega}f(x)\subset \partial_{\Omega}f(x)\subset \partial f_{\Omega}(x)
\end{align} and  $\widehat{\partial}_{\Omega}^{\epsilon}(f+c)(x)=\widehat{\partial}_{\Omega}^{\epsilon}f(x)$ for all $x\in X$, where  $c\in\R$ and $\epsilon\ge 0$.
\end{remark}

We now demonstrate  how one can  employ the relative  subdifferentials to  establish  Fermat-type  optimality conditions for the following constrained optimization problem (see, e.g.,  \cite{Mor06,Roc98}):
\begin{equation}\label{prob1}
    \min \{ f(x)\mid x\in \Omega\},
\end{equation}
where $\Omega \subset X$ is a nonempty and closed set and $f: X\to\bar{\R}$ is a lsc function relative to $\Omega$ at the reference point.

\begin{pro}[Fermat-type optimality]\label{thm2}
	Let $\bar x\in\Omega$ be a local optimal solution of problem~\eqref{prob1}. Then,
	\begin{equation}
		\label{prob1-eq1}
		0\in\widehat{\partial}^{\epsilon}_{\Omega}f(\bar x) \text{ \rm for all } \epsilon\ge 0,
	\end{equation} and consequently, \begin{equation}
	\label{prob1-eq1-limit}
	0\in\partial_{\Omega}f(\bar x).
\end{equation}
\end{pro}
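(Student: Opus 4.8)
The plan is to verify that the zero functional lies in the $\epsilon$-regular subdifferential relative to $\Omega$ for every $\epsilon \ge 0$, working directly from Definition~\ref{T03-def-2}(i): we must show $0 \in \widehat{\partial}_\epsilon f_\Omega(\bar x)$ and $0 \in J(T(\bar x, \Omega))$. The second membership is immediate: since $\bar x \in \Omega$, the zero vector is a tangent vector to $\Omega$ at $\bar x$ (take $v_k \equiv 0$ and any $t_k \to 0^+$ in Definition~\ref{tangent-cone}(i)), so $0 \in T(\bar x, \Omega)$, and because the duality mapping is positively homogeneous with $J(0) = \{0\}$, we get $0 \in J(T(\bar x,\Omega))$.

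For the first membership, recall that $0 \in \widehat{\partial}_\epsilon f_\Omega(\bar x)$ means $(0,-1) \in \widehat N_\epsilon((\bar x, f_\Omega(\bar x)), \epi f_\Omega)$, i.e.
\[
\limsup_{(x,y) \xrightarrow{\tepi f_\Omega} (\bar x, f(\bar x))} \frac{\langle (0,-1), (x,y) - (\bar x, f(\bar x))\rangle}{\Vert (x,y) - (\bar x, f(\bar x))\Vert} \le \epsilon.
\]
The numerator equals $-(y - f(\bar x)) = f(\bar x) - y$. Here local optimality of $\bar x$ for~\eqref{prob1} gives a neighborhood $U$ of $\bar x$ with $f(x) \ge f(\bar x)$ for all $x \in U \cap \Omega$; shrinking to $U$, for any $(x,y) \in \epi f_\Omega$ near $(\bar x, f(\bar x))$ we have $x \in \Omega$, $y \ge f_\Omega(x) = f(x) \ge f(\bar x)$, hence $f(\bar x) - y \le 0$. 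Since the denominator is positive, the quotient is $\le 0 \le \epsilon$, so the $\limsup$ is $\le \epsilon$. This establishes~\eqref{prob1-eq1}. Finally,~\eqref{prob1-eq1-limit} follows by taking the constant sequences $\epsilon_k \downarrow 0$ (e.g.\ $\epsilon_k = 1/k$, or rather a decreasing sequence tending to $0$), $x_k \equiv \bar x$, $x_k^* \equiv 0$ in Definition~\ref{T03-def-2}(ii): indeed $x_k \xrightarrow[f]{\Omega} \bar x$ trivially, $0 \rightharpoonup^* 0$, and $0 \in \widehat{\partial}^{\epsilon_k}_\Omega f(x_k)$ by the part just proved, so $0 \in \partial_\Omega f(\bar x)$.

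I do not expect a genuine obstacle here; the statement is the relative analogue of the classical Fermat rule, and the whole point of the definition~\eqref{prop1-eq1} is that the extra intersection with $J(T(\bar x,\Omega))$ costs nothing at a minimizer. The only mild subtlety worth flagging is that the $\epsilon$-normal-set inequality must be read with the understanding that the $\limsup$ is taken over points staying in $\epi f_\Omega$ and converging to $(\bar x, f(\bar x))$ (with matching function values in the relevant convention), and that one restricts attention to the optimality neighborhood $U$; once that is in place the numerator is automatically nonpositive and the argument is routine.
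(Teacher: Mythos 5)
Your proof is correct and follows essentially the same route as the paper: you verify $(0,-1)\in \widehat N_{\epsilon}((\bar x,f(\bar x)),\epi f_{\Omega})$ by noting that local optimality forces the numerator $f(\bar x)-y\le 0$ for epigraph points near the reference point, and you observe $0\in J(T(\bar x,\Omega))$ since $0$ is always a tangent vector. The only cosmetic difference is that for \eqref{prob1-eq1-limit} the paper cites the inclusion $\widehat{\partial}_{\Omega}f(\bar x)\subset\partial_{\Omega}f(\bar x)$ from \eqref{sub-relation}, whereas you unwind the definition with constant sequences, which amounts to the same argument.
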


\begin{proof}
Since $\bar x$ is a local optimal solution to problem~\eqref{prob1}, one sees that there exists $\delta>0$ such that
$f(x)\ge f(\bar x)$  for all  $x\in \Omega\cap \B(\bar x,\delta).$ For any $(x, r)\in \epi f_{\Omega}\cap\B((\bar x, f(\bar x)),\delta)$, it holds that $\Vert x-\bar x\Vert\le \delta$ and $r\ge f_{\Omega}(x),$ which implies that $x\in \Omega$ and $r\ge f(x)\ge f(\bar x).$ Then, for each $ \epsilon\ge 0$,
$$
f(\bar x)-r\le \epsilon\Vert(x,r)-(\bar x, f(\bar x))\Vert\; \text{ \rm for all } (x, r)\in \epi f_{\Omega}\cap\B((\bar x, f(\bar x)),\delta),
$$
so
$$
\limsup_{(x,r)\xrightarrow{{\rm epi}f_\Omega}(\bar x,f(\bar x))}\dfrac{\langle (0,-1), (x,r)-(\bar x, f(\bar x))\rangle}{\Vert (x,r)-(\bar x,f(\bar x))\Vert}\le \epsilon,
$$
which means that $(0,-1)\in \widehat N_{\epsilon}((\bar x,f_\Omega(\bar x)),\epi f_\Omega)$ or equivalently, $ 0\in \widehat{\partial}_{\epsilon}f_{\Omega}(\bar x).$ Moreover, we see that  $0\in  T(\bar x,\Omega)$, so $0\in J(0)\subset J(T(\bar x,\Omega)).$ Consequently, $0\in \widehat{\partial}^{\epsilon}_{\Omega}f(\bar x)$, i.e.,  \eqref{prob1-eq1} holds. The relation in \eqref{prob1-eq1-limit} is directly followed by combining \eqref{prob1-eq1} with $\epsilon:=0$ and the inclusion in \eqref{sub-relation}.
\end{proof}

As demonstrated  in the  proposition above, using the relative subdifferentials, we establish necessary optimality conditions for  problem~\eqref{prob1} {\it  without} any regularities or qualification conditions, except for  the standard assumptions on the relatively lsc property of  $f$ and the closedness of $\Omega$. The following example illustrates that the Fermat-type optimality rules in  Proposition~\ref{thm2}  not only work well for a {\it non-subdifferentiable} (in the sense of \cite{Mor06,Roc98}) setting but also perform better than some existing ones for the {\it subdifferentiable} framework.

\begin{exam} (i)  [{\it Non-subdifferentiablity setting}] Let $\Omega:=[0,\infty)$, and	consider the problem
	\begin{align}\label{E1} \min \{ f(x)\mid x\in \Omega\},\tag{E1}\end{align} where $f:\R\to\bar\R$ is  given by
	$$f(x):=\begin{cases}
		-\infty&\text{ \rm if } x<0;\\
		0&\text{ \rm if } x=0;\\
		\infty&\text{ \rm if } x>0.
	\end{cases}$$ 	It is clear that $\bar x:=0$ is a  local optimal solution to problem~\eqref{E1}. By direct calculation, we find $f_{\Omega}=\begin{cases}
	0&\text{ \rm if } x=0;\\
	\infty&\text{ \rm otherwise }
\end{cases}$ and $\partial_{\Omega}f(\bar x)=\widehat{\partial}^{\epsilon}_{\Omega}f(\bar x)=[0,\infty)$  for all $ \epsilon\ge 0$. This means that the Fermat-type rules in \eqref{prob1-eq1} and \eqref{prob1-eq1-limit} hold for   problem~\eqref{E1}. However,
  $f$ is not subdifferentiable in the sense of \cite{Mor06,Roc98},   the optimality conditions in \cite[Theorem~8.15]{Roc98}, and \cite[Proposition~5.3]{Mor06} are {\it not applicable} to  this problem. \\
(ii) [{\it Subdifferentiablity setting}] Let $\Omega:=(-\infty,0]$, and consider the problem
	\begin{align}\label{E2} \min\{ f(x)\mid x\in \Omega\},\tag{E2}\end{align} where $f:\R\to\R$ is defined by
$f(x):=-\vert x\vert $ for $x\in \R.$

Considering $\bar x:=0,$ we see that $\partial_{\Omega}f(\bar x)=\{1\}$, so $0\notin \partial_{\Omega}f(\bar x).$ Invoking now the Fermat-type optimality rule of \eqref{prob1-eq1-limit}, we assert that
$\bar x$ is {\it not} a local optimal solution to   problem~\eqref{E2}. However, some existing optimality conditions such as \cite[Theorem~8.15]{Roc98} and \cite[Proposition~5.3]{Mor06} are {\it not able} to recognize the {\it non-optimality} of $\bar x$  due to the fulfilment of their necessary condition: $0\in \partial f(\bar x)+N(\bar x,\Omega),$ where $\partial $ and $N$ stand for the limiting subdifferential and limiting normal cone, respectively, in the sense of  \cite{Mor06,Roc98}.
\end{exam}

We now demonstrate that the locally Lipschitz continuity relative to a set of a function  around a reference point is {\it sufficient } for the {\it non-emptiness} of the relative  limiting subdifferential at that reference point.

\begin{thm}\label{theo-non} Let  $f:X\to \bar{\R}$ be  lsc relative to a set $\Omega\subset X$ and $\Omega $ be locally closed around $\bar x\in \Omega\cap \dom f.$ If $f$ is locally Lipschitz relative to $\Omega$ around $\bar x$, then $\partial_{\Omega} f(\bar x)\ne \emptyset.$
\end{thm}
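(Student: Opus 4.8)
The plan is to construct a sequence of points approaching $\bar x$ at which the relative $\epsilon$-regular subdifferential is nonempty, and then extract a weak${}^*$-convergent subsequence of corresponding subgradients whose limit lies in $\partial_\Omega f(\bar x)$. First I would fix the Lipschitz modulus $\ell \ge 0$ and a neighborhood $U$ of $\bar x$ on which $|f(x) - f(u)| \le \ell\|x - u\|$ for all $x, u \in U \cap \Omega$; shrinking $U$, I may assume $\Omega \cap \cl U$ is closed and $f$ is lsc relative to $\Omega$ on $U \cap \Omega$. The natural candidate to generate subgradients is to apply the Fermat-type rule of Proposition~\ref{thm2} to a perturbation of $f_\Omega$. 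Specifically, for the linear tilt I would look at $g(x) := f_\Omega(x) + \ell\|x - \bar x\|$ (or a smooth perturbation if one prefers to avoid the corner of the norm): since $f$ is $\ell$-Lipschitz relative to $\Omega$ near $\bar x$, the function $x \mapsto f(x) + \ell\|x - \bar x\|$ attains a local minimum relative to $\Omega$ at $\bar x$, so Proposition~\ref{thm2} gives $0 \in \widehat\partial_\Omega\big(f(\cdot) + \ell\|\cdot - \bar x\|\big)(\bar x)$. The difficulty is that this only produces a regular subgradient of the \emph{sum}, and to peel off the norm term one needs a sum rule — which in this paper is only available in fuzzy form (Section~4).

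So the cleaner route is the \textbf{classical density/variational argument} for nonemptiness of the limiting subdifferential, adapted to the relative setting. I would argue as follows. Working in $\epi f_\Omega$, which is locally closed around $(\bar x, f(\bar x))$ by the lsc-relative and local-closedness hypotheses, I apply a suitable variational principle (Ekeland, or Borwein--Preiss in the smooth norm available here) to the distance-type function, or more directly I use the known fact that for a locally closed set $S$ and any point $\bar z \in S$ there are points $z_k \xrightarrow{S} \bar z$ with $\widehat N(z_k, S) \ne \{0\}$ — obtained by minimizing $\|z - w_k\|^2$ over $S$ for $w_k \notin S$ chosen to converge to $\bar z$, whose minimizers $z_k$ satisfy $J(w_k - z_k)$-type normality, i.e. a nonzero regular normal. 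Here the key point is that, because $f$ is Lipschitz relative to $\Omega$, the epigraph $\epi f_\Omega$ is "not vertical" near $(\bar x, f(\bar x))$: any regular normal $(x^*, \lambda) \in \widehat N((x, f(x)), \epi f_\Omega)$ with $x$ near $\bar x$ must have $\lambda \le 0$, and if $\lambda = 0$ then $x^* \in \widehat N(x, \Omega) = \widehat N(x,\Omega)$ has, combined with the Lipschitz estimate, norm controlled so that normalizing we can force $\lambda < 0$; rescaling by $-1/\lambda$ yields $x^* /(-\lambda) \in \widehat\partial_\epsilon f_\Omega(x)$ for appropriate $\epsilon_k \downarrow 0$, with $\|x^*/(-\lambda)\| \le \ell + \epsilon_k$.

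Thus I would produce sequences $\epsilon_k \downarrow 0$, $x_k \xrightarrow{\Omega} \bar x$ with $f(x_k) \to f(\bar x)$ (the latter from lsc-relative together with the Lipschitz bound, which forces the approximating epigraph points to have second coordinate converging to $f(\bar x)$), and $x_k^* \in \widehat\partial_\Omega^{\epsilon_k} f(x_k)$ with $\|x_k^*\| \le \ell + \epsilon_k$ bounded. Since $X$ is reflexive with the stated convexity properties, the unit ball of $X^*$ is weakly sequentially compact, so passing to a subsequence $x_k^* \rightharpoonup^* x^*$. By Definition~\ref{T03-def-2}(ii), $x^* \in \partial_\Omega f(\bar x)$, which is therefore nonempty. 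The main obstacle I anticipate is the bookkeeping needed to guarantee simultaneously that (a) the regular normals at the approximating points can be chosen genuinely non-horizontal (i.e. with strictly negative last coordinate, not just $\le 0$), which is exactly where the relative Lipschitz hypothesis is consumed, and (b) that $x_k^* \in J(T(x_k, \Omega))$ — the extra intersection in the definition of $\widehat\partial_\Omega^{\epsilon_k}$ — is respected; this second point should follow because the penalization/projection construction produces subgradients of the form arising from $J$ applied to feasible directions, but it requires care to align the geometry of the projection with the dual map $J$ and the tangent cone $T(x_k,\Omega)$.
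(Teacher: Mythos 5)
Your overall skeleton --- produce points $x_k\xrightarrow[f]{\Omega}\bar x$ carrying $\epsilon_k$-regular relative subgradients $x_k^*$ with $\Vert x_k^*\Vert\le \ell+\epsilon_k(1+\ell)$, then extract a weak$^*$ limit lying in $\partial_{\Omega}f(\bar x)$ --- coincides with the paper's final step. The gap is in how you manufacture the $x_k^*$. A projection/variational argument on $\epi f_{\Omega}$ does yield points with \emph{nonzero regular normals} to $\epi f_{\Omega}$, but Definition~\ref{T03-def-2}(i) requires the $X$-component of the normal to lie in $J(T(x_k,\Omega))$, and nothing in the projection construction delivers this: the proximal normal obtained by projecting $w_k\notin\epi f_{\Omega}$ points in the direction of the duality map of the \emph{product} space applied to $w_k-z_k$, and its $X$-component has no reason to belong to $J(T(x_k,\Omega))$. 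You flag this as point (b) but do not resolve it, and it is not a cosmetic issue.

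Worse, your point (a) is not independent of (b). Your argument that a horizontal normal $(x^*,0)$ can be ruled out ``combined with the Lipschitz estimate'' is exactly the estimate \eqref{lem31-ii-eq1} of Lemma~\ref{lem31}(ii), which holds \emph{only} for normals already lying in $J(T(x,\Omega))\times\R$. For an arbitrary regular normal to $\epi f_{\Omega}$ the claim is false: whenever $\widehat{N}(x,\Omega)\ne\{0\}$ (e.g.\ $\Omega=\{0\}$ with $f\equiv 0$, or any boundary point of $\Omega$), the epigraph $\epi f_{\Omega}$ carries nonzero horizontal regular normals $(x^*,0)$ with $x^*\in\widehat{N}(x,\Omega)$, even though $f$ is Lipschitz relative to $\Omega$; such normals cannot be rescaled to have last coordinate $-1$, so the projection may produce nothing usable. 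The paper avoids both problems at once by a different device: it splits into the case ${\rm lip}_{\Omega}f(\bar x)=0$, where $0\in\widehat{\partial}^{\epsilon_k}_{\Omega}f(\bar x)$ directly and the constant sequence $x_k=\bar x$ works, and the case ${\rm lip}_{\Omega}f(\bar x)>0$, where the \emph{failure} of the Lipschitz property at the smaller modulus $\bar\ell={\rm lip}_{\Omega}f(\bar x)/2$, fed into the two-way normal-cone characterization of the relative Aubin property of the profile map $\mathcal E^f$ (Lemma~\ref{lem31}, via \cite[Lemma~3.2]{MWY2023b}), produces normals $(x_k^*,y_k^*)$ that are \emph{already} in $\widehat{N}((x_k,f(x_k)),\epi f_{\Omega})\cap\big(J(T(x_k,\Omega))\times\R\big)$ and violate the bound $\Vert x_k^*\Vert\le\bar\ell\vert y_k^*\vert$; combined with \eqref{subdifferentiability-eq1} this forces $y_k^*<0$, and the normalized vectors $x_k^*/(-y_k^*)$ are the desired relative subgradients. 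To repair your proof you would need either this mechanism or some independent argument that the projection points and directions can be chosen compatibly with $J(T(x_k,\Omega))$; as written, the construction does not close.
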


To prove this theorem, we need to establish some relationships between the locally Lipschitz property  relative to a set of  a function and the relative subdifferentials as in the following lemma.

\begin{lem}\label{lem31}
Let  $f:X\to \bar{\R}$ be  lsc relative to a set $\Omega\subset X$, and let $\Omega $ be locally closed around $\bar x\in \Omega\cap \dom f.$ Then the following statements hold:
	
	{\rm (i)} $f$ is locally Lipschitz relative to $\Omega$ around $\bar x$ with a modulus $\ell\ge 0$ if and only if $\mathcal E^{f}:\mathbb R^n\rightrightarrows \R$ satisfies the Aubin property relative to $\Omega$ around $(\bar x,f(\bar x))$ with the modulus $\ell,$ where $\mathcal E^f$ is a {\it profile mapping}  of $f$  defined by
	$$
	\mathcal E^f(x):=\begin{cases}
		\left\{\alpha\in \R\mid \alpha\ge f(x)\right\}& \text{ \rm if } x\in \dom f,\\
		\emptyset & \text{ \rm otherwise.}
	\end{cases}
	$$
	
	{\rm (ii)} The locally Lipschitz property relative to $\Omega$ of $f$ around $\bar x$  with a modulus $\ell\ge 0$ guarantees the existence of $\eta>0$ satisfying
	\begin{align}\label{lem31-ii-eq1}
		\Vert x^*\Vert\le \ell| y^*|+\epsilon(1+\ell)\end{align}  for all $(x^*,y^*)\in \widehat{N}_{\epsilon}((x,f(x)),\epi f_{\Omega})\cap [J(T(x,\Omega))\times\R]$, $x\in \B(\bar x,\eta)\cap \Omega$  and $\epsilon\ge 0,$ and particularly,
	\begin{align}\Vert x^*\Vert\le \ell+\epsilon(1+\ell)\;  \label{lem31-ii-eq2}
	\end{align} for all $ x^*\in \widehat{\partial}^{\epsilon}_{\Omega}f(x) $, $ x\in \B(\bar x,\eta)\cap \Omega$ and  $\epsilon\ge 0.$
\end{lem}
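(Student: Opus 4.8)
The plan is to prove the two parts in sequence, deriving (ii) from (i) together with the normal-cone estimate that the Aubin property forces on the profile mapping. For part (i), I would argue by the standard equivalence between Lipschitz-type behavior of a function and the Aubin/Lipschitz-like property of its (epigraphical) profile mapping, but carried out carefully in the \emph{relative} setting. For the forward implication, suppose $f$ is locally Lipschitz relative to $\Omega$ around $\bar x$ with modulus $\ell$, so \eqref{lips-con} holds on $U\cap\Omega$ for some neighborhood $U$ of $\bar x$. Take neighborhoods of $\bar x$ and of $f(\bar x)$ and fix $x,u\in \Omega\cap U$; given $\alpha\in \mathcal E^f(u)$ with $(u,\alpha)$ near $(\bar x,f(\bar x))$, set $\beta:=\alpha + \ell\Vert u-x\Vert$; then $\beta\ge f(u)+\ell\Vert u-x\Vert \ge f(x)$ by \eqref{lips-con}, so $\beta\in \mathcal E^f(x)$ and $|\beta-\alpha|\le \ell\Vert u-x\Vert$, which gives $\mathcal E^f(u)\cap V\subset \mathcal E^f(x)+\ell\Vert u-x\Vert\,\B$. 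Local closedness of $\gph \mathcal E^f$ around $(\bar x,f(\bar x))$ follows from the relative lower semicontinuity of $f$ and local closedness of $\Omega$. For the converse, one recovers \eqref{lips-con} by applying the Aubin inclusion to the point $\alpha:=f(u)\in \mathcal E^f(u)$ and reading off that $f(x)\le f(u)+\ell\Vert u-x\Vert$, then swapping the roles of $x$ and $u$; a small argument keeps both points inside the relevant neighborhoods. (I note that the statement says $\mathcal E^f:\mathbb R^n\rightrightarrows\R$, which looks like a typo for $X\rightrightarrows\R$; I would silently read it as $X\rightrightarrows\R$.)

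For part (ii), the idea is that the Aubin property relative to $\Omega$ of $\mathcal E^f$ around $(\bar x,f(\bar x))$, established in (i), yields a uniform bound on the relative $\epsilon$-normals to $\epi f_\Omega=\gph\mathcal E^f$. Concretely, fix $\eta>0$ such that the Aubin inclusion holds on $\B(\bar x,\eta)\cap\Omega$ with the neighborhood $V$ of $f(\bar x)$ absorbing $f(x)$ for $x\in \B(\bar x,\eta)\cap\Omega$ (possible by relative continuity/lsc plus the Lipschitz bound). Let $x\in \B(\bar x,\eta)\cap\Omega$, $\epsilon\ge 0$, and $(x^*,y^*)\in \widehat N_\epsilon((x,f(x)),\epi f_\Omega)$ with $x^*\in J(T(x,\Omega))$. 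The key computation: pick a direction $h$ realizing $\langle x^*,h\rangle$ close to $\Vert x^*\Vert$ with $h\in T(x,\Omega)$ available because $x^*\in J(T(x,\Omega))$ — in fact since $x^*=J(v)$ for some $v\in T(x,\Omega)$, we have $\langle x^*,v\rangle=\Vert x^*\Vert^2$ and $\Vert v\Vert=\Vert x^*\Vert$. Moving from $x$ along $v$ inside $\Omega$ (approximately, via the sequences defining $v\in T(x,\Omega)$) while simultaneously moving the epigraph coordinate up by $\ell\Vert v\Vert t$ keeps us in $\epi f_\Omega$ by the Lipschitz-relative estimate; testing the $\epsilon$-normal inequality along this curve $(x+tv_k,\ f(x)+\ell\Vert v\Vert t)$ gives, in the limit,
\[
\langle x^*,v\rangle - \ell\Vert v\Vert\,|y^*|\ \le\ \epsilon\,\Vert(v,\ell\Vert v\Vert)\Vert\ \le\ \epsilon(1+\ell)\Vert v\Vert,
\]
hence $\Vert x^*\Vert^2\le \ell\Vert x^*\Vert\,|y^*| + \epsilon(1+\ell)\Vert x^*\Vert$, and dividing by $\Vert x^*\Vert$ (the case $x^*=0$ being trivial) yields \eqref{lem31-ii-eq1}. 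The specialization \eqref{lem31-ii-eq2} then follows from \eqref{prop1-eq1}: if $x^*\in \widehat\partial^\epsilon_\Omega f(x)=\widehat\partial_\epsilon f_\Omega(x)\cap J(T(x,\Omega))$, then $(x^*,-1)\in \widehat N_\epsilon((x,f_\Omega(x)),\epi f_\Omega)$ by Definition of the geometric $\epsilon$-subdifferential, so $|y^*|=1$ in \eqref{lem31-ii-eq1}.

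The main obstacle is the limiting argument in (ii) that turns the tangent-vector data (sequences $t_k\downarrow 0^+$, $v_k\to v$ with $x+t_kv_k\in\Omega$) into a valid test sequence inside $\epi f_\Omega$ for the $\epsilon$-normal inequality, and checking that the perturbed epigraph point $(x+t_kv_k,\ f(x)+\ell\Vert v\Vert t_k)$ genuinely lies in $\epi f_\Omega$ for large $k$ — this uses $f(x+t_kv_k)\le f(x)+\ell\Vert x+t_kv_k - x\Vert = f(x)+\ell t_k\Vert v_k\Vert$ and that $\ell t_k\Vert v_k\Vert$ is eventually $\le \ell\Vert v\Vert t_k + o(t_k)$, so one needs a slightly inflated modulus $\ell+\delta$ and then lets $\delta\downarrow 0$, or equivalently absorbs the discrepancy into the $\limsup$. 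One also has to make sure the points stay in the neighborhood $U$ from \eqref{lips-con}, which is automatic for $k$ large since $t_k\downarrow 0$. The rest — the $J$-duality identities, the triangle inequality $\Vert(v,\ell\Vert v\Vert)\Vert\le(1+\ell)\Vert v\Vert$ (valid for the sup-type product norm; for a general product norm one absorbs a dimensional constant, but the paper's conventions make this clean) — is routine.
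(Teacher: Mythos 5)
Your proposal is correct in substance; the genuine divergence from the paper is in part (ii). The paper obtains \eqref{lem31-ii-eq1} in one stroke by citing Lemma~3.2 of \cite{MWY2023b}, which converts the relative Aubin property of $\mathcal E^f$ (established in (i)) into exactly this bound on $\epsilon$-normals to $\gph\mathcal E^f\cap(\Omega\times\R)=\epi f_{\Omega}$ intersected with $J(T(x,\Omega))\times\R$; the only remaining work is choosing $\eta$ so that $f(x)\in\B(f(\bar x),\delta)$ for all $x\in\B(\bar x,\eta)\cap\Omega$, which the paper does by taking $\eta:=\min\{r,\delta/(\ell+1)\}$. You instead prove the normal-cone estimate from first principles: write $x^*=J(v)$ with $v\in T(x,\Omega)$, test the $\epsilon$-normal inequality along $(x+t_kv_k,\,f(x)+\ell t_k\Vert v_k\Vert)\in\epi f_{\Omega}$, and use $\langle x^*,v\rangle=\Vert x^*\Vert^2=\Vert x^*\Vert\,\Vert v\Vert$. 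This is sound (put $\Vert v_k\Vert$ rather than $\Vert v\Vert$ in the epigraph coordinate and the $o(t_k)$ discrepancy you worry about disappears, since the Lipschitz bound gives membership in $\epi f_{\Omega}$ exactly), it makes the lemma self-contained, and it makes visible why the tangential restriction $x^*\in J(T(x,\Omega))$ is what produces the bound. The trade-off is that you are essentially re-proving the cited lemma in the epigraphical special case.

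One caution on the converse in (i): your step ``apply the Aubin inclusion to $\alpha:=f(u)$'' requires $f(u)\in V$, and since $f$ is only lsc relative to $\Omega$ this is not automatic. This is precisely where the paper spends most of its effort: it first applies the Aubin inclusion at $\bar x$ to get $f(u)\le f(\bar x)+\ell\Vert u-\bar x\Vert$, hence $\limsup_{u\xrightarrow{\Omega}\bar x} f(u)\le f(\bar x)$, and combines this with the relative lower semicontinuity to conclude $f(u)\to f(\bar x)$, so that $f(u)\in V$ for $u$ close enough to $\bar x$. Your ``small argument keeps both points inside the relevant neighborhoods'' must be exactly this; it is the only place in part (i) where the lsc hypothesis enters, and without it the converse does not close, so it should be written out.
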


\begin{proof}
	{\rm (i)}  Let $f$ be locally Lipschitz relative to $\Omega$ around $\bar x$ with a modulus $\ell\ge 0$. Then there exists a neighborhood $U$ of $\bar x$ such that $\vert f(x)-f(u)\vert\le \ell \Vert x-u\Vert$ for all $x,u\in U\cap \Omega,$ which is equivalent to
$f(x)\in f(u) +\ell \Vert x-u\Vert\B$ for all $x,u\in U\cap \Omega.$ This implies that $f(x)+r\in f(u)+r+\ell \Vert x-u\Vert\B$  for all $x,u\in U\cap \Omega$ and  for all  $r\ge 0,$ which in turn yields that $\mathcal E^{f}(x)\subset \mathcal E^f(u)+\ell\Vert x-u\Vert\B$ for all $x,u\in U\cap\Omega.$ Therefore, for an arbitrary neighborhood $V$ of $f(\bar x),$ we have $\mathcal E^{f}(x)\cap V\subset \mathcal E^f(u)+\ell\Vert x-u\Vert\B$ for all $x,u\in U\cap \Omega,$ which means that $\mathcal E^f$ satisfies the Aubin property relative $\Omega$ around $(\bar x,f(\bar x))$ with   modulus~$\ell.$
	
Now, let $\mathcal E^f$ satisfy the Aubin property relative to $\Omega$ around $(\bar x,f(\bar x))$ with the modulus $\ell$, which  means that   there exist neighborhoods $V$ of $f(\bar x)$ and $U$ of $\bar x$ such that
\begin{equation}\label{lem31-i-eq1}
\mathcal E^f(u)\cap V\subset \mathcal E^f(x) +\ell\Vert u-x\Vert \mathbb B\quad  \forall x,u\in \Omega\cap U.
\end{equation}
If $f$ is not locally Lipschitz relative to $\Omega$ around $\bar x$ with a modulus $\ell\ge 0$, then there exist sequences $x_k$ and $y_k$ in $\Omega$ such that   $x_k,y_k\to\bar x$ and
\begin{align}\label{lem31-i-eq1-a}
\vert f(x_k) - f(y_k)\vert>\ell\Vert x_k-y_k\Vert,\ \forall k\in \N.
\end{align}
Without loss of generality, we can assume that $x_k,y_k\in U$ for all $k\in \N.$ One proves that $f(x_k)\to f(\bar x)$ and $f(y_k)\to f(\bar x)$ as $k\to\infty$. Indeed, by \eqref{lem31-i-eq1}, one has $f(\bar x)\in \mathcal E^f(x_k)+\ell\Vert x_k-\bar x\Vert\B.$ It follows that $f(\bar x)\ge f(x_k)-\ell\Vert x_k-\bar x\Vert$, which further implies that
$$
\limsup_{k\to\infty}f(x_k)\le \limsup_{k\to\infty}(f(\bar x)+\ell\Vert x_k-\bar x\Vert)=f(\bar x).
$$
This together with the lsc property relative to $\Omega$ of $f$ around $\bar x$ entails that $f(x_k)\to f(\bar x)$ as $k\to\infty.$ Similarly, we also have $f(y_k)\to f(\bar x)$ as $k\to\infty$. For each $k\in \N,$ we obtain by \eqref{lem31-i-eq1-a} that $f(x_k)>f(y_k)+\ell\Vert x_k-y_k\Vert$ or $f(y_k)>f(x_k)+\ell\Vert x_k-y_k\Vert$. By the symmetry of $x_k$ and $y_k$, we may assume  $f(x_k)>f(y_k)+\ell\Vert x_k-y_k\Vert$ and  set $\epsilon_k:=f(x_k)-f(y_k)-\ell\Vert x_k-y_k\Vert>0$. Thus
\begin{equation}\label{lem31-i-eq1a}
f(y_k)+\frac{\epsilon_k}{2}<f(x_k)-\ell\Vert x_k-y_k\Vert.
\end{equation}
Since $f(y_k)\to f(\bar x)$ and $\epsilon_k\to 0$  as $k\to\infty$, it holds that $f(y_k)+\frac{\epsilon_k}{2}\in V$ for sufficiently large $k.$ Now, for a sufficient  large $k,$ \eqref{lem31-i-eq1a} yields that $$\mathcal E^f(y_k)\cap V\nsubseteq \mathcal E^f(x_k)+\ell\Vert x_k-y_k\Vert\B,$$ which contradicts \eqref{lem31-i-eq1}. Thus $f$ is locally Lipschitz relative to $\Omega$ around $\bar x$ with the modulus $\ell$.
	
	{\rm (ii)} Let  $f$ be locally Lipschitz relative to $\Omega$ of $f$ around  $\bar x$ with a modulus $\ell\ge 0.$ In view of  (i),  one sees that   $\mathcal E^f$ satisfies  the Aubin property relative to $\Omega$ around $(\bar x,f(\bar x))$ with   $\ell.$ Invoking  \cite[Lemma~3.2]{MWY2023b}, we find $\delta>0$ such that \begin{equation}
		\label{thm32-eq1}
		\Vert x^*\Vert\le \ell| y^*|+\epsilon(1+\ell) \; \text{ \rm for all } (x^*,y^*)\in \widehat{N}_{\epsilon}((x,y),\gph\mathcal E^f\cap (\Omega\times\R))\cap \big(J(T(x,\Omega))\times\R\big),
	\end{equation}
	provided that $\epsilon\ge 0$ and $(x,y)\in [\B(\bar x,\delta)\times\B(f(\bar x),\delta)]\cap\gph \mathcal E^f\cap(\Omega\times\R).$ By the locally Lipschitz continuity relative to $\Omega$ of $f$ around $\bar x$, one asserts that   there exists $r>0$ such that
$$
\vert f(x)-f(\bar x)\vert\le \ell\Vert x-\bar x\Vert,\; \forall x\in \Omega\cap\B(\bar x,r).
$$
Setting $\eta:=\min\{r,\frac{\delta}{\ell+1}\},$ we have $\vert f(x)-f(\bar x)\vert<\delta$ for all $x\in \Omega\cap\B(\bar x,\eta),$ which demonstrates that  $f(x)\in \B(f(\bar x),\delta)$ for all $x\in \Omega\cap\B(\bar x,\eta).$ Thus \eqref{thm32-eq1} ensures that
\begin{equation*}%\label{thm32-eq2}
\Vert x^*\Vert\le \ell| y^*|+\epsilon(1+\ell) \; \text{ \rm for all } (x^*,y^*)\in \widehat{N}_{\epsilon}((x,f(x)),\gph\mathcal E^f\cap (\Omega\times\R))\cap \big(J(T(x,\Omega))\times\R\big).
\end{equation*}
for all $x\in \Omega\cap\B(\bar x,\eta)$. Since $\gph\mathcal E^f\cap (\Omega\times\R)=\epi f_{\Omega},$ one sees that inequality \eqref{lem31-ii-eq1} holds for all $(x^*,y^*)\in \widehat{N}_{\epsilon}((x,f(x)),\epi f_{\Omega})\cap \big(J(T(x,\Omega))\times\R\big)$  and $x\in\Omega\cap  \B(\bar x,\eta).$
	
	When $ x^*\in \widehat{\partial}^{\epsilon}_{\Omega}f(x) $ with $ x\in \B(\bar x,\eta)\cap \Omega$ and  $\epsilon\ge 0,$ it holds that $(x^*,-1)\in \widehat{N}_{\epsilon}((x,f(x)),\epi f_{\Omega})$ and $x^*\in J(T(x,\Omega))$. We use \eqref{lem31-ii-eq1} with $y^*:=-1$ to reach inequality \eqref{lem31-ii-eq2}, that is, $\Vert x^*\Vert\le \ell+\epsilon(1+\ell),$ whenever $(x^*,-1)\in \widehat{N}_{\epsilon}((x,f(x)),\epi f_{\Omega})\cap \big(J(T(x,\Omega))\times\R\big),$$ x\in \B(\bar x,\eta)\cap \Omega$ and $\epsilon\ge 0$.
\end{proof}

\begin{proof}[Proof of Theorem~\ref{theo-non}]
	Let $f$ be locally Lipschitz relative to $\Omega$ around $\bar x$ with a modulus $\ell\ge 0$, and  let ${\rm lip}_{\Omega}f(\bar x)$ be defined as in \eqref{lips-con}.	 Using Lemma~\ref{lem31}~(ii) with $\epsilon=0$, we find $\eta>0$ such that
	\begin{equation}\label{subdifferentiability-eq1}
		\Vert x^*\Vert\le \ell\vert y^*\vert
	\end{equation} for all  $(x^*,y^*)\in \widehat{N}((x,f(x)),\epi f_{\Omega})\cap [J(T(x,\Omega))\times\R]$ and $x\in \B(\bar x,\eta)\cap \Omega.$

Clearly,  ${\rm lip}_{\Omega}f(\bar x)\ge~0$. If ${\rm lip}_{\Omega}f(\bar x)=0$, then, for each $k\in\N$, $f$ is locally Lipschitz relative to $\Omega$ around $\bar x$ for  modulus $\epsilon_k:=\frac{1}{k}$, where   $k\in \N$. Then, for each $k\in \N,$ there exists $\delta_k>0$ such that
$$
\vert f(x)-f(u)\vert\le \epsilon_{k}\Vert x-u\Vert,\; \forall x,u\in \Omega\cap\B(\bar x,\delta_k),
$$
which implies, in particular, that $f(\bar x)-f(x)\le \epsilon_{k}\Vert x-\bar x\Vert$ for any $x\in\Omega\cap\B(\bar x,\delta_k)$.  Note that, for each $(x,r)\in {\rm epi}f_{\Omega},$ it follows that $x\in\Omega$ and $r\ge f(x).$ Hence,
$$
\limsup_{(x,r)\xrightarrow{{\rm epi}f_{\Omega}}(\bar x,f(\bar x))}\dfrac{\langle (0,-1),(x,r)-(\bar x,f(\bar x))\rangle}{\Vert (x,r)-(\bar x,f(\bar x))\vert}\le \limsup_{x\xrightarrow{\Omega\cap\B(\bar x,\delta_k)}\bar x}\dfrac{f(\bar x)-f(x)}{\Vert x-\bar x\Vert}\le \epsilon_{k}.
$$
Therefore, $(0,-1)\in \widehat{N}_{\epsilon_{k}}((\bar x,f(\bar x)),\epi f_{\Omega})$, or equivalently, $0\in \widehat{\partial}_{\epsilon_{k}}f_\Omega(\bar x).$ Furthermore, we see that  $0\in  J(T(\bar x,\Omega))$, so $0\in \widehat{\partial}^{\epsilon_k}_\Omega f(\bar x).$ This proves that  $0\in \partial_{\Omega}f(\bar x)$ by taking $x_k:=\bar x, k\in\N,$ in the definition of the relative limiting subdifferential by \eqref{prop1-eq2}. Consequently, $\partial_{\Omega}f(\bar x)\ne \emptyset.$

If ${\rm lip}_{\Omega}f(\bar x)>0$, then $f$ is not locally Lipschitz relative to $\Omega$ around $\bar x$ with the modulus $\bar{\ell}:=\frac{{\rm lip}_{\Omega}f(\bar x)}{2}.$  Using  Lemma~\ref{lem31}~(ii) with $\epsilon=0,$ we find sequences  $x_k\in \B(\bar x,\epsilon_{k})\cap\Omega\subset \B(\bar x,\eta)\cap \Omega$ and $(x_k^*,y_k^*)\in \widehat{N}((x_k,f(x_k)),\epi f_{\Omega})\cap \big(J(T(x_k,\Omega))\times\R\big)$ such that
 \begin{equation}\label{subdifferentiability-eq2}
 	\Vert x_k^*\Vert >\bar{\ell}\vert y_k^*\vert,
 \end{equation}
 where $\epsilon_k:=\frac{1}{k}$ for  $k\in \N$.

Letting $k\in \N$, we obtain by $(x_k^*,y_k^*)\in\widehat{N}((x_k,f(x_k)),\epi f_{\Omega})$ that, for any $\alpha>0$, there exists $\delta>0$ such that
$$
\langle x_k^*, x-x_k\rangle+y_k^*(r-f(x_k))\le \alpha\Vert (x-x_k,r-f(x_k)\Vert,\; \forall (x,r)\in \epi f_\Omega\cap[\B(x_k,\delta)\times\B(f(x_k),\delta)].
$$
Considering $x:=x_k$ and $r:=f(x_k)+\frac{\delta_k}{k},$ we have  $y_k^*\le \alpha.$ Letting $\alpha\to 0,$ we obtain that $y_k^*\le 0$. If $y_k^*=0$ for all sufficiently large $k,$ then, by \eqref{subdifferentiability-eq2}, $(x_k^*,0)\in \widehat{N}((x_k,f(x_k)),\epi f_{\Omega})\cap \big(J(T(x_k,\Omega))\times\R\big)$ and $\Vert x_k^*\Vert>0$ for such sufficiently large $k$. This contradicts to the fact that $\Vert x_k^*\Vert\le 0$ for such sufficiently large $k$ by virtue of   \eqref{subdifferentiability-eq1}.

By taking a subsequence if necessary, we may assume that such that $y_{k}^*< 0$ for all $k\in\N.$ From $(x_{k}^*,y_{k}^*)\in \widehat{N}((x_k,f(x_k)),\epi f_{\Omega})\cap \big(J(T(x_k,\Omega))\times\R\big), k\in \N$, one has
$$
(\frac{x_{k}^*}{-y_{k}^*},-1)\in \widehat{N}((x_k,f(x_k)),\epi f_{\Omega})\cap \big(J(T(x_k,\Omega))\times\R\big),
$$
which entails that
\begin{align}\label{them-1}
(\frac{x_{k}^*}{-y_{k}^*},-1)\in \widehat{N}_{\epsilon_k}((x_k,f(x_k)),\epi f_{\Omega})\cap \big(J(T(x_k,\Omega))\times\R\big),
\end{align}
due to  $\widehat{N}((x_k,f(x_k)),\epi f_{\Omega})\subset \widehat{N}_{\epsilon_k}((x_k,f(x_k)),\epi f_{\Omega}). $ Observe  by \eqref{them-1} that $\frac{x_{k}^*}{-y_{k}^*}\in \widehat{\partial}^{\epsilon_k}_{\Omega}f(x_{k})$ for $k\in \N.$ Moreover, by \eqref{subdifferentiability-eq1}, $\Vert \frac{x_{k}^*}{-y^*_k}\Vert\le \ell,$ $k\in \N,$ which yields that the sequence $\{\frac{x_{k}^*}{-y_{k}^*}\}$ is bounded,  so we can assume  without loss of generality that  $\{\frac{x_{k}^*}{-y_{k}^*}\}$ is  weak* convergent to some $ \tilde x^*\in X^*$.  Consequently, $\tilde x^*\in \partial_{\Omega}f(\bar x)$. Thus
  $\partial_{\Omega}f(\bar x)\ne \emptyset,$  which completes the proof of the theorem.
\end{proof}

The following theorem  establishes some calculus rules for the relative subdifferentials.

\begin{thm}\label{cal-1}
Let  $f: X \rightarrow \bar{\mathbb{R}}$ and $\Omega\subset X$ be such that $f$ is lsc relative to $\Omega$ around $\bar{x}\in \Omega\cap\dom f$ and  $\Omega$ is locally closed around $\bar{x}$. For  $\epsilon\ge 0$ and $\lambda> 0$, we have
\begin{equation}\label{cal-1-eq0}
\lambda \widehat{\partial}^{\hat{\epsilon}}_{\Omega}f(\bar x)\subset \widehat{\partial}^{\epsilon}_{\Omega}(\lambda f)(\bar x) \subset \lambda \widehat{\partial}^{\tilde{\epsilon}}_{\Omega}f(\bar x),
\end{equation}
where $\hat{\epsilon}:=\frac{\epsilon}{\max\{\lambda,1\}}$ and $\tilde{\epsilon}:=\epsilon\cdot\max\{\frac{1}{\lambda}, 1\}.$ Consequently, the following calculus rules hold:\\
	{\rm (i)} $\widehat{\partial}_{\Omega}(\lambda f)(\bar x)=\lambda\widehat{\partial}_{\Omega}f(\bar x)$ for $\lambda>0.$\\
	{\rm (ii)} $\partial_{\Omega}(\lambda f)(\bar x)=\lambda \partial_{\Omega}f(\bar x)$ for $\lambda>0.$
\end{thm}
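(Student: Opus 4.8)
The plan is to reduce everything to a single computation about how the geometric $\epsilon$-subdifferential $\widehat{\partial}_{\epsilon}$ behaves when the epigraph is stretched in the vertical direction. Two preliminary observations carry most of the bookkeeping. Since $\lambda>0$, one has $(\lambda f)_{\Omega}=\lambda f_{\Omega}$, so $\epi(\lambda f)_{\Omega}=M_{\lambda}(\epi f_{\Omega})$, where $M_{\lambda}\colon X\times\R\to X\times\R$, $M_{\lambda}(x,t):=(x,\lambda t)$, is a linear homeomorphism carrying $(\bar x,f(\bar x))$ to $(\bar x,(\lambda f)(\bar x))$; in particular $\widehat{\partial}_{\epsilon}(\lambda f)_{\Omega}(\bar x)=\widehat{\partial}_{\epsilon}(\lambda f_{\Omega})(\bar x)$. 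Second, because $T(\bar x,\Omega)$ is a cone and $J(\lambda v)=\lambda J(v)$, one has $\lambda\,J(T(\bar x,\Omega))=J(\lambda T(\bar x,\Omega))=J(T(\bar x,\Omega))$; hence intersecting with $J(T(\bar x,\Omega))$ commutes with multiplication by $\lambda>0$, and \eqref{cal-1-eq0} will follow once we prove
$$
\lambda\,\widehat{\partial}_{\hat\epsilon}f_{\Omega}(\bar x)\ \subset\ \widehat{\partial}_{\epsilon}(\lambda f_{\Omega})(\bar x)\ \subset\ \lambda\,\widehat{\partial}_{\tilde\epsilon}f_{\Omega}(\bar x).
$$

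The analytic input is the elementary estimate, valid for the product norm on $X\times\R$, $\min\{\lambda,1\}\,\|(a,t)\|\le\|(a,\lambda t)\|\le\max\{\lambda,1\}\,\|(a,t)\|$ for all $(a,t)\in X\times\R$ and $\lambda>0$. Writing a generic point of $\epi(\lambda f_{\Omega})$ near $(\bar x,\lambda f(\bar x))$ as $(x,\lambda z)$ with $(x,z)\in\epi f_{\Omega}$ near $(\bar x,f(\bar x))$, and setting $x^{*}:=\lambda u^{*}$ and $\bar z:=f(\bar x)$, the difference quotient governing membership $(\lambda u^{*},-1)\in\widehat{N}_{\epsilon}((\bar x,\lambda\bar z),\epi(\lambda f_{\Omega}))$ equals
$$
\frac{\lambda\big(\langle u^{*},x-\bar x\rangle-(z-\bar z)\big)}{\|(x-\bar x,\lambda(z-\bar z))\|},
$$
whose $\limsup$ as $(x,z)\xrightarrow{\epi f_{\Omega}}(\bar x,\bar z)$ is, by the norm estimate, squeezed between $\min\{\lambda,1\}$ and $\max\{\lambda,1\}$ times the $\limsup$ of $\dfrac{\langle u^{*},x-\bar x\rangle-(z-\bar z)}{\|(x-\bar x,z-\bar z)\|}$, i.e.\ the quantity whose being $\le\delta$ characterises $u^{*}\in\widehat{\partial}_{\delta}f_{\Omega}(\bar x)$. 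The quotient is a priori unsigned; a short case split on the sign of $\langle u^{*},x-\bar x\rangle-(z-\bar z)$ — equivalently, replacing it by its positive part, the negative part being harmless because then the $f_{\Omega}$-$\limsup$ is $\le 0$ and the inclusion at hand is trivial — turns the norm estimate into the clean one-sided bounds. Feeding $u^{*}\in\widehat{\partial}_{\hat\epsilon}f_{\Omega}(\bar x)$ with $\hat\epsilon=\epsilon/\max\{\lambda,1\}$ through the upper bound gives $\lambda u^{*}\in\widehat{\partial}_{\epsilon}(\lambda f_{\Omega})(\bar x)$, the first inclusion; conversely, if $\lambda u^{*}\in\widehat{\partial}_{\epsilon}(\lambda f_{\Omega})(\bar x)$, the lower bound forces $\min\{\lambda,1\}$ times the $f_{\Omega}$-$\limsup$ to be $\le\epsilon$, i.e.\ $u^{*}\in\widehat{\partial}_{\tilde\epsilon}f_{\Omega}(\bar x)$ since $\epsilon/\min\{\lambda,1\}=\epsilon\max\{1/\lambda,1\}=\tilde\epsilon$. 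The matching of parameters hinges on the identity $\min\{\lambda,1\}\cdot\max\{1/\lambda,1\}=1$.

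Item (i) is the case $\epsilon=0$: then $\hat\epsilon=\tilde\epsilon=0$ and \eqref{cal-1-eq0} collapses to $\widehat{\partial}_{\Omega}(\lambda f)(\bar x)=\lambda\widehat{\partial}_{\Omega}f(\bar x)$. For (ii) I would work straight from Definition~\ref{T03-def-2}(ii). Since $\lambda>0$, a sequence $x_{k}\xrightarrow{\Omega}\bar x$ satisfies $(\lambda f)(x_{k})\to(\lambda f)(\bar x)$ iff $f(x_{k})\to f(\bar x)$, so $x_{k}\xrightarrow[\lambda f]{\Omega}\bar x\Leftrightarrow x_{k}\xrightarrow[f]{\Omega}\bar x$. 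Given $x^{*}\in\partial_{\Omega}(\lambda f)(\bar x)$, take the associated $\epsilon_{k}\downarrow 0$, $x_{k}\xrightarrow[\lambda f]{\Omega}\bar x$ and $x_{k}^{*}\rightharpoonup^{*}x^{*}$ with $x_{k}^{*}\in\widehat{\partial}^{\epsilon_{k}}_{\Omega}(\lambda f)(x_{k})$; for all large $k$ the hypotheses of \eqref{cal-1-eq0} hold at $x_{k}$ (as $f$ is lsc relative to $\Omega$ and $\Omega$ is locally closed \emph{around} $\bar x$, hence around $x_{k}$, and $x_{k}\in\Omega\cap\dom f$ eventually), whence $\lambda^{-1}x_{k}^{*}\in\widehat{\partial}^{\tilde\epsilon_{k}}_{\Omega}f(x_{k})$ with $\tilde\epsilon_{k}=\epsilon_{k}\max\{1/\lambda,1\}\downarrow 0$; since $\lambda^{-1}x_{k}^{*}\rightharpoonup^{*}\lambda^{-1}x^{*}$, this yields $\lambda^{-1}x^{*}\in\partial_{\Omega}f(\bar x)$, i.e.\ $\partial_{\Omega}(\lambda f)(\bar x)\subset\lambda\partial_{\Omega}f(\bar x)$. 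Applying this with $(f,\lambda)$ replaced by $(\lambda f,1/\lambda)$ gives the reverse inclusion, hence equality.

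The only genuine difficulty is bookkeeping: propagating the $\epsilon$-parameters through the vertical rescaling while keeping the unsigned difference quotient under control, the key facts being the two-sided norm estimate and the identity $\min\{\lambda,1\}\max\{1/\lambda,1\}=1$. The one point in (ii) that must not be glossed over is that the calculus rule \eqref{cal-1-eq0} is being invoked at the moving points $x_{k}$, which is legitimate precisely because the relative lower semicontinuity of $f$ and the local closedness of $\Omega$ are assumed \emph{around} $\bar x$, not merely at $\bar x$.
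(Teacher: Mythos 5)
Your proposal is correct and follows essentially the same route as the paper: the same vertical-rescaling norm estimate on $\|(a,\lambda t)\|$ versus $\|(a,t)\|$, the same case split on the sign of the difference quotient, the same use of the homogeneity of $J$ together with the cone property of $T(\bar x,\Omega)$ to handle the intersection in \eqref{prop1-eq1}, and the same passage to weak$^*$ limits along the defining sequences for part (ii). The only cosmetic difference is that you use the two-sided estimate to obtain both inclusions of \eqref{cal-1-eq0} directly, whereas the paper proves only the right-hand inclusion and deduces the left-hand one by the substitution $(f,\lambda)\mapsto(\lambda f,1/\lambda)$ --- the same trick you reserve for the reverse inclusion in (ii).
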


\begin{proof}
Let  $\epsilon\ge 0$ and $\lambda> 0$. We first justify that
\begin{equation}\label{cal-1-eq2}
\widehat{\partial}^{\epsilon}_{\Omega}(\lambda f)(\bar x) \subset \lambda \widehat{\partial}^{\tilde{\epsilon}}_{\Omega}f(\bar x),
\end{equation}
where $\tilde{\epsilon}:=\epsilon\cdot\max\{\frac{1}{\lambda}, 1\}.$ To see this, one takes $x^*\in \widehat{\partial}^{\epsilon}_{\Omega}(\lambda f)(\bar x).$ By definition, one has $x^*\in J(T(\bar x,\Omega))$ and  $x^*\in \widehat{\partial}_{\epsilon}(\lambda f)_{\Omega}(\bar x)$. The last relation yields that $(x^*,-1)\in \widehat{N}_{\epsilon}((\bar x,f(\bar x)),\epi (\lambda f)_{\Omega})$, so
\begin{equation}\label{cal-1-eq1}
\limsup_{(u,r)\xrightarrow{{\rm epi}\, (\lambda f)\cap(\Omega\times\R)}(\bar x,\lambda f(\bar x))}\dfrac{\langle (x^*,-1), (u,r)-(\bar x,\lambda f(\bar x))\rangle}{\Vert (u,r)-(\bar x,\lambda f(\bar x))\Vert}\le \epsilon.
\end{equation}
We first prove that, for any $(u,r)\in {\rm epi}\, (\lambda f)\cap(\Omega\times\R)$,
\begin{equation}\label{cal-1-eq001}
\max\{1,\lambda\}\Vert (u,\frac{r}{\lambda})-(\bar x,f(\bar x))\Vert\ge \Vert (u,r)-(\bar x,\lambda f(\bar x))\Vert.
\end{equation}
Indeed, if $\lambda\in (0,1]$, then one sees that
\begin{align*}
\left(u-\bar x,r-\lambda f(\bar x)\right)
&=\lambda\left(u-\bar x,\frac{r}{\lambda}-f(\bar x)\right)+(1-\lambda)(u-\bar x,0),
\end{align*}
which entails that
\begin{align*}
\lambda\left\Vert\left( u-\bar x,\frac{r}{\lambda}- f(\bar x)\right)\right\Vert&\ge \left\Vert( u-\bar x,r-\lambda f(\bar x))\right\Vert-(1-\lambda)\Vert (u-\bar x,0)\Vert\\
&\ge \left\Vert( u-\bar x,r-\lambda f(\bar x))\right\Vert-(1-\lambda)\Vert (u-\bar x,r-\lambda f(\bar x))\Vert\\
&=\lambda\Vert (u-\bar x,r-\lambda f(\bar x))\Vert.
\end{align*}
Thus
\begin{equation}
\label{cal-1-eq0002}
\left\Vert\left( u-\bar x,\frac{r}{\lambda}- f(\bar x)\right)\right\Vert\ge \Vert (u-\bar x,r-\lambda f(\bar x))\Vert.
\end{equation}
Similarly, if $\lambda>1,$ then
\begin{align*}
\left(u-\bar x,r-\lambda f(\bar x)\right)=\left(u-\bar x,\frac{r}{\lambda}-f(\bar x)\right)+(\lambda-1)(0,\frac{r}{\lambda}-f(\bar x)),
\end{align*}
which implies that
\begin{align*}
\left\Vert \left(u-\bar x,\frac{r}{\lambda}- f(\bar x)\right)\right\Vert&\ge\Vert (u-\bar x,r-\lambda f(\bar x))\Vert-\frac{\lambda-1}{\lambda}\Vert (0,r-\lambda f(\bar x))\Vert\\
&\ge  \frac{1}{\lambda}\Vert (u-\bar x,r-\lambda f(\bar x))\Vert.
\end{align*}
This ensures that
\begin{equation}\label{cal-1-eq0001}
\lambda \left\Vert \left(u-\bar x,\frac{r}{\lambda}- f(\bar x)\right)\right\Vert\ge \Vert (u-\bar x,r-\lambda f(\bar x))\Vert.
\end{equation}
Combining  \eqref{cal-1-eq0002} and \eqref{cal-1-eq0001}  justifies  \eqref{cal-1-eq001}.

We now justify that	
\begin{equation}\label{cal-1-eq002}
\limsup_{(u,r)\xrightarrow{{\rm epi}\, (\lambda f)\cap(\Omega\times\R)}(\bar x,\lambda f(\bar x))}\dfrac{\langle (\frac{x^*}{\lambda},-1), (u,\frac{r}{\lambda})-(\bar x,f(\bar x))\rangle}{\Vert (u,\frac{r}{\lambda})-(\bar x,f(\bar x))\Vert}\le \tilde{\epsilon}.
\end{equation}
To do this, let us consider any $(u,r)\in {\rm epi}\, (\lambda f)\cap(\Omega\times\R)$ with $(u,\frac{r}{\lambda})\neq (\bar x,f(\bar x))$. If
$$
\langle (\frac{x^*}{\lambda},-1), (u,\frac{r}{\lambda})-(\bar x,f(\bar x))\rangle<0,
$$
then
$$
\dfrac{\langle (\frac{x^*}{\lambda},-1), (u,\frac{r}{\lambda})-(\bar x,f(\bar x))\rangle}{\Vert (u,\frac{r}{\lambda})-(\bar x,f(\bar x))\Vert}< 0\le \tilde\epsilon.
$$
Otherwise, $\langle (\frac{x^*}{\lambda},-1), (u,\frac{r}{\lambda})-(\bar x,f(\bar x))\rangle\ge 0$,  we obtain by  \eqref{cal-1-eq001} that
$$
\dfrac{\langle (\frac{x^*}{\lambda},-1), (u,\frac{r}{\lambda})-(\bar x,f(\bar x))\rangle}{\Vert (u,\frac{r}{\lambda})-(\bar x,f(\bar x))\Vert}\le \max\{1,\lambda\}\dfrac{\frac{1}{\lambda}\langle (x^*,-1), (u,r)-(\bar x,\lambda f(\bar x))\rangle}{\Vert (u,r)-(\bar x,\lambda f(\bar x))\Vert}.
$$
Taking  now \eqref{cal-1-eq1} into account, we obtain
$$
\limsup_{(u,r)\xrightarrow{{\rm epi}\, (\lambda f)\cap(\Omega\times\R)}(\bar x,\lambda f(\bar x))}\dfrac{\langle (\frac{x^*}{\lambda},-1), (u,\frac{r}{\lambda})-(\bar x,f(\bar x))\rangle}{\Vert (u,\frac{r}{\lambda})-(\bar x,f(\bar x))\Vert}\le \dfrac{\max\{1,\lambda\}}{\lambda}\cdot\epsilon=\max\{\frac{1}{\lambda},1\}\cdot\epsilon=\tilde\epsilon.
$$
Consequently,  \eqref{cal-1-eq002} holds. This fact ensures that
\begin{equation*}%
\limsup_{(u,\alpha)\xrightarrow{{\rm epi}\,f_\Omega}(\bar x,f(\bar x))}\dfrac{\langle (\frac{x^*}{\lambda},-1), (u,\alpha)-(\bar x,f(\bar x))\rangle}{\Vert (u,\alpha)-(\bar x,f(\bar x))\Vert}\le \tilde{\epsilon},
\end{equation*}
so $(\frac{x^*}{\lambda},-1)\in \widehat{N}_{\tilde{\epsilon}}((\bar x,f(\bar x)),\epi f_{\Omega}).$ This means that  $\frac{x^*}{\lambda}\in \widehat{\partial}_{\tilde{\epsilon}}f_{\Omega}(\bar x).$ In view of  $x^*\in J(T(\bar x,\Omega))$ and the homogeneous property of $J$,  we conclude that $$\frac{x^*}{\lambda}\in \widehat{\partial}_{\tilde{\epsilon}}f_{\Omega}(\bar x)\cap J(T(\bar x,\Omega)),$$ which yields that    $x^*\in \lambda \widehat{\partial}^{\tilde{\epsilon}}_{\Omega}f(\bar x)$, so \eqref{cal-1-eq2} is valid.

To prove \eqref{cal-1-eq0}, it remains to prove that \begin{equation}
	\label{cal-1-eq4}
	\lambda \widehat{\partial}^{\hat{\epsilon}}_{\Omega}f(\bar x)\subset \widehat{\partial}^{\epsilon}_{\Omega}(\lambda f)(\bar x),
\end{equation}  	
where	$\hat{\epsilon}:=\frac{\epsilon}{\max\{\lambda,1\}}$. To see this, applying   inclusion \eqref{cal-1-eq2} for $\hat\epsilon$, $\hat f:=\lambda f$, and  $\hat \lambda:=\frac{1}{\lambda}$, we obtain
$$
\widehat{\partial}^{\hat{\epsilon}}_{\Omega}f(\bar x)=\widehat{\partial}^{\hat{\epsilon}}_{\Omega}(\hat\lambda \hat f)(\bar x) \subset \hat\lambda \widehat{\partial}^{\bar{\epsilon}}_{\Omega}\hat f(\bar x)=\frac{1}{\lambda}\widehat{\partial}^{\bar{\epsilon}}_{\Omega}(\lambda f)(\bar x),
$$
where $\bar{\epsilon}:=\hat{\epsilon}\cdot\max\{1,\frac{1}{\hat\lambda}\}=\epsilon.$ This indicates  that \eqref{cal-1-eq4} holds,   so we obtain \eqref{cal-1-eq0}.
	
Clearly,	the assertion~(i) is valid by invoking \eqref{cal-1-eq0} with $\epsilon=0.$ To prove (ii), we let $x^*\in \partial_{\Omega}(\lambda f)(\bar x).$ It follows that  there exist sequences $\epsilon_k\downarrow 0$, $x_k\xrightarrow[f]{\Omega}\bar x$, and $x_k^*\rightharpoonup^* x^*$ such that $x_k^*\in \widehat{\partial}^{\epsilon_k}_{\Omega}(\lambda f)(x_k)$ for all $k\in \N$.

By $x_k\xrightarrow[f]{\Omega}\bar x$, the current assumptions are satisfied at $x_k$ near $\bar x$ and we may assume without loss of generality  that these assumptions hold for all $k\in\N.$ By \eqref{cal-1-eq0}, one has $x_k^*\in \lambda \widehat{\partial}^{\tilde{\epsilon}_k}_{\Omega}f(x_k)$ with $\tilde{\epsilon}_k:=\epsilon_k\cdot\max\{1,1/\lambda\}.$ Then, we find  $\tilde{x_k}^*\in \widehat{\partial}^{\tilde{\epsilon}_k}_{\Omega}f(x_k)$ such that $x_k^*=\lambda \tilde x_k^*$ for all $k\in\N.$
Since $x_k^*\rightharpoonup^*x^*,$   sequence $\{x_k^*\}$  is bounded, so is   $\{\tilde x_k^*\}$. We can assume that $\tilde x_k^*\rightharpoonup^* \tilde x^*$ for some $\tilde x^*\in X^*.$ Then,  it holds $\tilde x^*\in \partial_{\Omega}f(\bar x)$, which proves that   $x^*=\lambda \tilde x^*\in \lambda\partial_{\Omega}f(\bar x).$ Thus
	\begin{equation}
		\label{cal-2-eq1}
		\partial_{\Omega}(\lambda f)(\bar x)\subset \lambda\partial_{\Omega}f(\bar x).
	\end{equation}

To prove the inverse inclusion, we take any $x^*_0\in \lambda\partial_{\Omega}f(\bar x)$, that is,   $x^*_0=\lambda x^*$ for some $x^*\in \partial_{\Omega}f(\bar x)$. Then, there exist sequences $\epsilon_k\downarrow 0$, $x_k\xrightarrow[f]{\Omega}\bar x$,  and $x_k^*\rightharpoonup^* x^*$ such that $x_k^*\in \widehat{\partial}^{\epsilon_k}_{\Omega}f(x_k)$ for all $k\in \N$. Hence, $\lambda x_k^*\in \lambda\widehat{\partial}^{\epsilon_k}_{\Omega}f(x_k)$ for all $k\in \N$.  Similarly, by \eqref{cal-1-eq0}, one has $\lambda x_k^*\in  \widehat{\partial}^{\hat{\epsilon}_k}_{\Omega}(\lambda f)(x_k)$ with $\hat{\epsilon}_k:=\epsilon_k\cdot\max\{1,\lambda\}$ for $k\in \N.$ Therefore, we find $\tilde x_k^*\in \widehat{\partial}^{\hat{\epsilon}_k}_{\Omega}(\lambda f)(x_k)$ such that $\lambda x_k^*=\tilde x_k^*$ for $k\in \N.$ Since $x_k^*\rightharpoonup^* x^*$,  $\{x_k^*\}$ is bounded, then $\{\tilde x_k^*\}$ is bounded as well. We can assume that $\tilde x_k^*\rightharpoonup^* \tilde x^*$ for some $\tilde x^*\in X^*$. Thus  $\tilde x^*\in \partial_{\Omega}(\lambda f)(\bar x)$. Note that  $x^*_0=\lambda x^*=\tilde x^*\in  \partial_{\Omega}(\lambda f)(\bar x)$. One easily see that  the inverse inclusion in
	 \eqref{cal-2-eq1} holds, i.e.,   assertion  (ii) is valid.
\end{proof}

\section{Optimality Conditions and Calculus Rules for Relative Subdifferentials}

In this section, we first employ the proposed relative subdifferentials to establish necessary optimality conditions for   non-subdifferentiable sum optimization problem~\eqref{SP}. Based the optimality conditions obtained, we then derive  exact and approximate  sum rules for computing  the relative subdifferentials.

Let us state {\it approximate} and {\it exact} necessary optimality conditions in terms of the relative subdifferentials for  problem~\eqref{SP}.

\begin{thm}\label{thm33}
Let $\bar x\in \mathcal C$ be a local optimal solution of  problem \eqref{SP}. Let   $f_1: X\to\bar{\R}$ is locally Lipschitz relative to $\mathcal C$ around $\bar x$ with a modulus $\ell\ge 0$, and let $f_2: X\to \bar{\R}$ be lsc relative to $\mathcal C$ around $\bar x$. Then, for any small $\eta> 0$, there exist  $x_i\in \B(\bar x,\eta)\cap\mathcal C$ with $\vert f_i(x_i)-f_i(\bar x)\vert\le \eta, i=1,2$ and $\tilde{\eta}\in (0,4\eta(\ell+1))$ such that
	\begin{equation}\label{thm33-eq1}
		0\in \widehat{\partial}^{\tilde{\eta}}_{\mathcal C}f_1(x_1)+\widehat{\partial}^{\tilde{\eta}}_{\mathcal C}f_2(x_2).
	\end{equation}
Consequently,
	\begin{equation}\label{Fermat-rule-eq}
		0\in \partial_{\mathcal C}f_1(\bar x)+\partial_{\mathcal C}f_2(\bar x).
	\end{equation}
\end{thm}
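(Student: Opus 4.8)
The plan is to approach Theorem~\ref{thm33} as a constrained variational argument that reduces problem~\eqref{SP} to a two-point extremal configuration and then applies a fuzzy (approximate) separation/sum-rule mechanism in the reflexive locally uniformly convex setting. First I would reformulate local optimality of $\bar x$ in terms of the epigraphs: since $f_1$ is locally Lipschitz relative to $\mathcal C$ around $\bar x$, Lemma~\ref{lem31}~(i) gives that the profile mapping $\mathcal E^{f_1}$ has the Aubin property relative to $\mathcal C$, and in particular $\epi (f_1)_{\mathcal C}$ is locally closed around $(\bar x, f_1(\bar x))$; combined with the lower semicontinuity of $f_2$ relative to $\mathcal C$, the set $\epi (f_2)_{\mathcal C}$ is locally closed as well. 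The minimality of $\bar x$ for $f_1+f_2$ on $\mathcal C$ should then be translated into the statement that the pair of sets
\begin{equation*}
\Omega_1:=\{(x,\alpha_1,\alpha_2)\mid x\in\mathcal C,\ \alpha_1\ge f_1(x)\},\qquad \Omega_2:=\{(x,\alpha_1,\alpha_2)\mid x\in\mathcal C,\ \alpha_2\ge f_2(x)\}
\end{equation*}
(or a suitable variant with a diagonal constraint on the sum) has a local extremal point at $(\bar x, f_1(\bar x), f_2(\bar x))$ relative to $\mathcal C$, in the sense of Definition (i) in the block containing Definition~\ref{T03-def-subgradients}. The perturbation sequence $b_k\to 0$ comes from pushing one epigraph down by $1/k$, which destroys the feasibility of any common point near $\bar x$ precisely because $\bar x$ is a local minimizer of the sum.

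The second stage is to invoke the relative extremal principle from \cite{MWY2023a} (cited in the excerpt as the source of the notion of local extremal point relative to a set) to produce, for each small tolerance, points $x_1, x_2$ near $\bar x$ in $\mathcal C$ with $|f_i(x_i)-f_i(\bar x)|$ small and regular normals $(x_1^*, -1)\in \widehat N_{\tilde\eta}((x_1,f_1(x_1)),\epi (f_1)_{\mathcal C})$ and $(x_2^*,-1)\in \widehat N_{\tilde\eta}((x_2,f_2(x_2)),\epi (f_2)_{\mathcal C})$ that approximately cancel: $x_1^*+x_2^*\approx 0$, up to an error controlled by $\tilde\eta$. Here the Lipschitz modulus $\ell$ enters through Lemma~\ref{lem31}~(ii), which bounds $\|x_1^*\|$ by something like $\ell+\tilde\eta(1+\ell)$ and hence forces the corresponding components to live in a compact set and keeps the vertical $(-1)$ normalization legitimate — this is why the modulus $\ell$ shows up in the radius $\tilde\eta\in(0,4\eta(\ell+1))$. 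To land in $\widehat\partial^{\tilde\eta}_{\mathcal C}f_i(x_i)$ rather than merely in $\widehat\partial_{\tilde\eta}(f_i)_{\mathcal C}(x_i)$, I would need the extra membership $x_i^*\in J(T(x_i,\mathcal C))$; since $\mathcal C$ is convex this should follow by projecting the normal onto the duality image of the tangent cone using the single-valued bicontinuous duality mapping $J$, absorbing the projection error into $\tilde\eta$ (this is presumably where the factor $4$ and the additive $\epsilon(1+\ell)$-type corrections accumulate). Assembling these gives $0\in \widehat\partial^{\tilde\eta}_{\mathcal C}f_1(x_1)+\widehat\partial^{\tilde\eta}_{\mathcal C}f_2(x_2)$, which is \eqref{thm33-eq1}.

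For the exact conclusion \eqref{Fermat-rule-eq}, I would run the approximate statement along a sequence $\eta_k\downarrow 0$. This produces $\tilde\eta_k\downarrow 0$, points $x_i^{(k)}\xrightarrow[f_i]{\mathcal C}\bar x$, and subgradients $u_k\in \widehat\partial^{\tilde\eta_k}_{\mathcal C}f_1(x_1^{(k)})$, $v_k\in \widehat\partial^{\tilde\eta_k}_{\mathcal C}f_2(x_2^{(k)})$ with $u_k+v_k\to 0$. By Lemma~\ref{lem31}~(ii) the sequence $\{u_k\}$ is bounded (modulus $\ell$), hence so is $\{v_k\}$; using weak sequential compactness of bounded sets in the reflexive $X^*$ I extract a subnet/subsequence with $u_k\rightharpoonup^* u^*$ and $v_k\rightharpoonup^* v^*$, so that $u^*+v^*=0$. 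By the definition \eqref{prop1-eq2} of the limiting subdifferential relative to $\mathcal C$, $u^*\in\partial_{\mathcal C}f_1(\bar x)$ and $v^*\in\partial_{\mathcal C}f_2(\bar x)$, giving $0=u^*+v^*\in\partial_{\mathcal C}f_1(\bar x)+\partial_{\mathcal C}f_2(\bar x)$.

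The main obstacle I anticipate is the careful bookkeeping in the approximate step: correctly packaging the extremal-principle output so that the normalization by $-1$ in the second coordinate is valid (which requires ruling out the degenerate case where the vertical component vanishes — handled via the Lipschitz bound in Lemma~\ref{lem31}, exactly as in the proof of Theorem~\ref{theo-non}) and tracking how the separate error tolerances — the extremal-principle radius, the $J(T(\cdot,\mathcal C))$-projection error, and the passage from $(f_1)_{\mathcal C}+(f_2)_{\mathcal C}$ back to the individual relative subdifferentials — combine into the single explicit bound $\tilde\eta<4\eta(\ell+1)$. The reflexive, locally uniformly convex hypotheses on $X$ and $X^*$ are what make the duality mapping $J$ single-valued and bicontinuous and make weak and weak$^*$ convergence coincide, and those facts are used crucially in both the $J(T(\cdot,\mathcal C))$ step and the limiting passage; I would be careful to cite them explicitly where invoked.
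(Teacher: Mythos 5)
Your overall strategy---recasting local optimality as a relative extremal point problem for two epigraph-type sets, invoking the relative extremal principle of \cite{MWY2023a}, using the Lipschitz bound of Lemma~\ref{lem31}~(ii) to rule out a degenerate vertical component and normalize it to $-1$, and then passing to the weak$^*$ limit along $\eta_k\downarrow 0$---is exactly the paper's. However, two steps as you describe them do not go through. First, the specific set system you write down, $\Omega_1=\{(x,\alpha_1,\alpha_2)\mid x\in\mathcal C,\ \alpha_1\ge f_1(x)\}$ and $\Omega_2=\{(x,\alpha_1,\alpha_2)\mid x\in\mathcal C,\ \alpha_2\ge f_2(x)\}$, is not an extremal system: the intersection contains every $(x,\alpha_1,\alpha_2)$ with $\alpha_i\ge f_i(x)$ for $i=1,2$, and translating $\Omega_1$ vertically by $-1/k$ does not empty it near $(\bar x,f_1(\bar x),f_2(\bar x))$. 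You hedge with ``a suitable variant with a diagonal constraint on the sum,'' but that variant is the whole construction; the paper instead works in $X\times\R$ with $\Omega_1=\epi\tilde f_1\cap(\mathcal C\times\R)$ and the \emph{reflected} epigraph $\Omega_2=\{(x,y)\in\mathcal C\times\R\mid \tilde f_2(x)\le -y\}$, where $\tilde f_i:=f_i-f_i(\bar x)$, so that local optimality of $\bar x$ forces $[\Omega_1+(0,1/k)]\cap\Omega_2\cap U=\emptyset$ directly. Second, your plan to obtain $x_i^*\in J(T(x_i,\mathcal C))$ by ``projecting the normal onto the duality image of the tangent cone and absorbing the projection error into $\tilde\eta$'' is not a valid argument: $J(T(x,\mathcal C))$ need not be convex (indeed, Theorem~\ref{thm-sumrule} has to \emph{assume} its convexity as an extra hypothesis), and nothing guarantees that such a projection incurs a small error. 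In the paper this membership costs nothing: the relative extremal principle \cite[Theorem~6.2]{MWY2023a} already delivers the dual elements inside $[\widehat N((x_i,y_i),\Omega_i)+\eta(\B_{X^*}\times\B)]\cap[J(T(x_i,\mathcal C))\times\R]$, and the homogeneity $J(\lambda x)=\lambda J(x)$ preserves the membership under the normalization by $y_2^*$ (which is also where the explicit bound $\tilde\eta=\eta/y_2^*<4\eta(\ell+1)$ comes from, via $|y_1^*|>\frac{1}{4(\ell+1)}$). Your limiting passage to \eqref{Fermat-rule-eq} is correct and matches the paper.
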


\begin{proof} Since $\bar x$ is a local optimal solution of problem~\eqref{SP}, one sees that there exists $\delta>0$ such that
\begin{equation}\label{Fermat-rule-eq1}
f_1(\bar x)+f_2(\bar x)\le f_1(x)+f_2(x)\; \forall x\in \B(\bar x,\delta)\cap\mathcal C.
\end{equation}
Let $\tilde f_1(x):=f_1(x)-f_1(\bar x),$ $\tilde f_2(x):=f_2(x)-f_2(\bar x)$ for $x\in X$, and set
$$\Omega_1:=\epi \tilde f_1\cap (\mathcal C\times\R) \; \text{ \rm and } \Omega_2:=\left\{(x,y)\in \mathcal C\times\R\mid \tilde f_2(x)\le -y\right\}.$$
It is easy to see that $(\bar x,0)\in \Omega_1\cap \Omega_2$, $\Pi_{X}(\Omega_1\cup\Omega_2)\subset \mathcal C$ and $\Omega_i$, $i=1,2$ are  closed sets.  Taking $b_k=\frac{1}{k}, k\in \N$ and a neighborhood $U:=\B((\bar x,0),\delta)$, we see that \begin{align}\label{1-sua} (\Omega_1+(0,b_k))\cap\Omega_2\cap U=\emptyset, \forall k\in\N.\end{align}
Otherwise, there is $(x,y)\in (\Omega_1+(0,b_k))\cap\Omega_2\cap U$. Then, one has $$y-b_k\ge f_1(x)-f_1(\bar x),\;  y+f_2(x)-f_2(\bar x)\le 0\; \text{ \rm and } \; x\in \mathcal C\cap\B(\bar x,\delta).$$ It follows that
\begin{align*}
f_1(\bar x)+f_2(\bar x)\ge f_1(x)+f_2(x)+b_k>f_1(x)+f_2(x),
\end{align*}
which is a contradiction to \eqref{Fermat-rule-eq1}. Thus \eqref{1-sua} holds, and we conclude that $(\bar x,0)$ is a local extremal point of  $\{\Omega_1,\Omega_2\}$ relative to $\mathcal C$.

Since $f_1$ is locally Lipschitz relative to $\mathcal{C}$ around  $\bar x$ with a modulus $\ell$, one sees that  $\tilde f_1$ is also locally Lipschitz relative to $\mathcal{C}$ around $\bar x$ with the modulus $\ell$.  Using similar  arguments as in the proof of   Lemma~\ref{lem31} with $\tilde f_1$ (cf.~\eqref{thm32-eq1}), we find $\delta_1>0$ such that $\tilde f_1$ is locally Lipschitz relative to $\mathcal C$ around $x$ with the modulus $\ell$, $\tilde f_2$ is lsc relative to $\mathcal C$ around $x$ for any $x\in\B(\bar x,\delta_1)\cap\mathcal C$, and
\begin{align}
	\label{thm33-eq001}
	&\Vert x^*\Vert\le \ell| y^*|+\epsilon(1+\ell) \; \text{ \rm for all } (x^*,y^*)\in \widehat{N}_{\epsilon}((x,y),\Omega_1)\cap \big(J(T(x,\mathcal{C}))\times\R\big),
\end{align}
whenever $\epsilon\ge 0$ and   $(x,y)\in [\B(\bar x,\delta_1)\times\B(0,\delta_1)]\cap \Omega_1.$

Now, for any $\eta$ with $0<\eta<\min\{\delta,\delta_1,\frac{1}{4(\ell+1)}\},$ by invoking \cite[Theorem~6.2]{MWY2023a}, we find   $(x_i,y_i)\in \Omega_i\cap\B((\bar x,0),\eta)$ and $(x_i^*,y_i^*)\in [\widehat{N}((x_i,y_i),\Omega_i)+\eta(\B_{X^*}\times\B)]\cap [J(T(x_i,\mathcal C))\times\R], i=1,2$ such that
\begin{align}\label{thm33-eq3-ex}
(x_1^*,y_1^*)+(x_2^*,y_2^*)=0 \; \text{ \rm and } \;  \Vert (x_1^*,y_1^*)\Vert + \Vert (x_2^*,y_2^*)\Vert =1.
\end{align} This, in particular, implies that $\Vert (x_1^*,y_1^*)\Vert=\Vert (x_2^*,y_2^*)\Vert=\frac{1}{2}$, $\Vert x_1^*\Vert=\Vert x_2^*\Vert\le \frac{1}{2}$  and  $|y^*_1|=|y^*_2|\le \frac{1}{2}.$  Since  $(x_1^*,y_1^*)\in \widehat{N}((x_1,y_1),\Omega_1)+\eta(\B_{X^*}\times\B)$, we conclude by  \eqref{rem21-eq1} that
\begin{align}\label{th-1b}(x_1^*,y_1^*)\in \widehat{N}_{\eta}((x_1,y_1),\Omega_1).\end{align}
If $\vert y_1^*\vert \le \eta$, then, by using \eqref{thm33-eq001} with $\epsilon=\eta,$ we have $
	\Vert x^*_1\Vert\le \eta(1+2\ell) <\frac{1}{2}-\eta,$ which contradicts  the following inequalities:  $\Vert x_1^*\Vert\ge \Vert(x_1^*,y_1^*)\Vert - \vert y_1^*\vert\ge \frac{1}{2}-\eta$.  Consequently, $\vert y_1^*\vert>\eta$, which also shows that $\vert y_2^*\vert> \eta$. Since $(x_i^*,y_i^*)\in \widehat{N}((x_i,y_i),\Omega_i)+\eta(\B_{X^*}\times\B), i=1,2$, one sees that there exist $(u_i,v_i)\in \widehat{N}((x_i,y_i),\Omega_i)$ and $(b_i,c_i)\in \B_{X^*}\times\B$  such that
\begin{align}\label{th-1a}(x_i^*,y_i^*)=(u_i,v_i)+\eta(b_i,c_i), i=1,2.
\end{align}
Note by $(x_i,y_i)\in\Omega_i, i=1,2$ that $x_i\in\mathcal{C}, i=1,2$, $y_1\ge \tilde f_1(x_1)$, and  $-y_2\ge \tilde f_2(x_2)$. We observe that if  $y_1>\tilde f_1(x_1)$, then $v_1=0.$ Indeed, let $y_1>\tilde f_1(x_1)$. We obtain by $(u_1,v_1)\in \widehat{N}((x_1,y_1),\Omega_1)$ that, for any $\alpha>0,$ there exists $r>0$ such that
\begin{equation}\label{thm31-eq006}
\langle (u_1,v_1),(x,y)-(x_1,y_1)\rangle\le \alpha\Vert(x,y)-(x_1,y_1)\Vert
\end{equation}
for all $(x,y)\in \B((x_1,y_1),r)\cap\Omega_1$, where $\Omega_1:=\epi \tilde f_1\cap (\mathcal C\times\R)$. Let $0<\gamma<\min\{r,y_1-\tilde f_1(x_1)\}$.  Picking $x:=x_1, y:=y_1-\gamma$, we see that $(x_1,y) \in \B((x_1,y_1),r)\cap\Omega_1$. From \eqref{thm31-eq006}, one has
$-v_1\le \alpha$. Similarly,  picking $x:=x_1, y:=y_1+\gamma$, one sees that $(x_1,y) \in \B((x_1,y_1),r)\cap\Omega_1$. It follows from \eqref{thm31-eq006} that $v_1\le\alpha.$ Thus  $|v_1|\le\alpha.$ Since $\alpha>0$ was arbitrarily taken, it holds that $v_1=0.$

By \eqref{th-1a} and the above observation,   if  $y_1>\tilde f_1(x_1)$, then  $\vert y_1^*\vert\le \eta$, which contradicts the fact that $\vert y_1^*\vert>\eta.$ Similarly, we can verify by \eqref{th-1a} and   relation  $(u_2,v_2)\in \widehat{N}((x_2,y_2),\Omega_2)$ that if  $-y_2>\tilde f_2(x_2)$, then  $\vert y_2^*\vert\le \eta$, which contradicts the fact that $\vert y_2^*\vert>\eta.$ Thus $y_1=\tilde f_1(x_1)$ and  $-y_2=\tilde f_2(x_2)$. Moreover, we obtain by $(x_i,y_i)\in \B((\bar x,0),\eta), i=1,2$ that  $x_i\in \B(\bar x,\eta), i=1,2$ and $|y_i|\le\eta, i=1,2.$ and Hence, $ \vert f_i(x_i)-f_i(\bar x)\vert\le \eta, i=1,2$.

We now  recall from \eqref{th-1b}  that
$(x_1^*,y_1^*)\in \widehat{N}_{\eta}((x_1,\tilde f_1(x_1)),\Omega_1)$. Then, for any $\alpha>0,$ there exists $\tilde\delta>0$ such that
$$
\langle (x_1^*,y_1^*),(x,y)-(x_1,\tilde f_1(x_1))\rangle\le (\eta+\alpha)\Vert (x,y)-(x_1,\tilde f_1(x_1))\Vert
$$
for all $(x,y)\in \Omega_1\cap\B(x_1,\tilde\delta)\times\B(\tilde f_1(x_1),\tilde\delta).$
Picking $x:=x_1$ and $y:=\tilde f_1(x_1)+\tilde\delta,$ one has $y_1^*\le \eta+\alpha$, which implies that $y_1^*\le \eta$  as $\alpha>0$ was arbitrarily taken. This fact, together with $\vert y_1^*\vert>\eta$ and $|y^*_1|\le \frac{1}{2}$, implies that $-\frac{1}{2}\le y_1^*<-\eta.$ Since $y_1^*=-y_2^*$, we can conclude that $\eta<y_2^*\le \frac{1}{2}.$ Taking \eqref{rem21-eq1} into account, we obtain by
$(x_i^*,y_i^*)\in \widehat{N}((x_i,y_i),\Omega_i)+\eta(\B_{X^*}\times\B), i=1,2$ that
\begin{equation}\label{thm33-eq4}
\left(\frac{x_1^*}{y_2^*},-1\right)\in \widehat{N}((x_1,y_1),\Omega_1)+\frac{\eta}{y_2^*}\B_{X^*}\times\B \subset \widehat{N}_{\tilde{\eta}}((x_1,y_1),\Omega_1)
\end{equation}
and
\begin{equation} \label{thm33-eq5}
\left(\frac{x_2^*}{y_2^*},1\right)\in \widehat{N}((x_2,y_2),\Omega_2)+\frac{\eta}{y_2^*}\B_{X^*}\times\B\subset \widehat{N}_{\tilde{\eta}}((x_2,y_2),\Omega_2),
\end{equation}
where $y_1=\tilde f_1(x_1)$ and  $y_2=-\tilde f_2(x_2)$ and $\tilde{\eta}:=\frac{\eta}{y_2^*}.$ From \eqref{thm33-eq4}, we see that $\frac{x_1^*}{y_2^*}\in \widehat{\partial}_{\tilde{\eta}}\tilde f_{1\mathcal{C}}(x_1)$ and then $\frac{x_1^*}{y_2^*}\in \widehat{\partial}^{\tilde{\eta}}_{\mathcal{C}}\tilde f_{1}(x_1) $  because of $\frac{x_1^*}{y_2^*}\in J(T(x_1,\mathcal C).$ On account of Remark~\ref{rem31}~(ii) and the definition of $\tilde f_1:=f_1-f_1(\bar x),$  we arrive at $\frac{x_1^*}{y_2^*}\in \widehat{\partial}^{\tilde{\eta}}_{\mathcal{C}}f_1(x_1)$. Note that
$(x,y)\in \Omega_2$ if and only if $(x,-y)\in  \epi \tilde f_2\cap (\mathcal{C}\times\R)$
and
$$(x^*,y^*)\in \widehat{N}_{\tilde{\eta}}((x_2,-\tilde f_2(x_2)),\Omega_2) \text{ \rm if and only if } (x^*,-y^*)\in \widehat{N}_{\tilde{\eta}}((x_2,\tilde f(x_2)),\epi \tilde f_2\cap(\mathcal C\times\R)).$$
Therefore,  one can derive similarly from
 \eqref{thm33-eq5} that  $\frac{x_2^*}{y_2^*}\in \widehat{\partial}^{\tilde{\eta}}_{\mathcal{C}} f_2(x_2)$ and so we get by \eqref{thm33-eq3-ex} that
\begin{equation*}\label{thm33-eq004}
    0=\frac{x_1^*+x_2^*}{y_2^*}\in \widehat{\partial}^{\tilde{\eta}}_{\mathcal{C}}f_1(x_1)+\widehat{\partial}^{\tilde{\eta}}_{\mathcal{C}} f_2(x_2).
\end{equation*}
Using \eqref{thm33-eq001} with $\epsilon:=\eta$, one can deduce that
$$
\frac{1}{2}=\Vert (x^*_1,y^*_1)\Vert\le \Vert x_1^*\Vert+\vert y_1^*\vert\le (\vert y_1^*\vert+\eta)(\ell+1),
$$
which implies  that
$$\vert y_1^*\vert\ge \frac{1}{2(1+\ell)}-\eta>\frac{1}{4(\ell+1)}$$ due to $\eta<\frac{1}{4(\ell+1)}$. Therefore, $\tilde\eta:=\frac{\eta}{y_2^*}=\frac{\eta}{\vert y_1^*\vert}< 4\eta(\ell+1)$  and consequently, \eqref{thm33-eq1} has been justified.

To justify inclusion \eqref{Fermat-rule-eq}, we set $\eta_k:=\frac{1}{4k(\ell+1)}$ for $k\in\N$.  For all $k\in\N$ large enough, we use  \eqref{thm33-eq1} to find   $x_{ik}\in \B(\bar x,\eta_k)\cap\mathcal C$, $\tilde\eta_k\in (0,\frac{1}{k})$ and $x_{ik}^*\in \widehat{\partial}_{\mathcal C}^{\tilde\eta_k}f_i(x_{ik})$ such that $\vert f_i(x_{ik})- f_i(\bar x)\vert\le\eta_k$, $i=1,2$ and
\begin{equation}\label{thm33-eq6}
x_{1k}^*+x_{2k}^*=0.
\end{equation}
Since $f_{1}$ is locally Lipschitz   relative to $\mathcal C$ around $\bar x$ with the modulus $\ell,$ it also has this property around $x_{1k}$ for all sufficiently large $k.$ Thus, by using \eqref{lem31-ii-eq2} in Lemma~\ref{lem31}, one has
$$\Vert x_{1k}^*\Vert\le \ell+\tilde\eta_k(1+\ell),$$ which shows that $\{x_{1k}^*\}$ is bounded. By taking a subsequence if necessary, we can assume that $x_{1k}^*\rightharpoonup^*x^*_1$ for some $x^*_1\in X^*$  as $k\to \infty.$ It proves that $x_1^*\in \partial_{\mathcal{C}} f_1(\bar x).$ Furthermore,   by \eqref{thm33-eq6}, $x_{2k}^*=-x_{1k}^*\rightharpoonup^* -x_1^*$ as $k\to \infty$, we have $-x_1^*\in \partial_{\mathcal C}f_2(\bar x)$. Thus,  inclusion \eqref{Fermat-rule-eq} is established, which completes the proof of the theorem.
\end{proof}

The following illustrates the importance of the assumptions in Theorem~\ref{thm33} for obtaining the approximate and exact necessary optimality conditions. More precisely,   condition~\eqref{Fermat-rule-eq} may go away if the lsc property of $f_2$ relative to $\mathcal{C}$ around the reference point is violated.

\begin{exam} [{\it The importance of assumptions}] Let $\mathcal C:=[-\frac{1}{2}, 0]$ and
	consider problem \begin{equation}\label{example33}
		\min \{f_1(x)+f_2(x)\mid  x\in\mathcal C\},\tag{E3}\end{equation} where $f_i:\R\to\bar\R, i=1,2$ are given respectively by
	$$f_1(x):= \begin{cases}\frac{1}{x+1}-1& \text{ \rm if } x>-1;\\
		\infty& \text{ \rm otherwise,}\end{cases}\; f_2(x):=\begin{cases}
		\infty&\text{ \rm if } x<0;\\
		0&\text{ \rm if } x=0;\\
		-\infty&\text{ \rm if } x>0.
	\end{cases}$$
	We can  check that $\bar x:=0$ is a local optimal solution to  problem \eqref{example33}. By direct calculation, we obtain $\partial_{\mathcal C} f_1(0)=\{-1\}$ and $ \partial_{\mathcal C} f_2(0)=(-\infty, 0]$, so $0\notin \partial_{\mathcal C}f_1(0)+\partial_{\mathcal C}f_2(0),$ which shows that the exact necessary optimality condition in \eqref{Fermat-rule-eq} is not valid for this setting. The reason is that $f_2$ is lsc relative to $\mathcal{C}$ at $\bar x$ but {\it not around} $\bar x.$
\end{exam}
%%
%%%
Obviously, the above approximate and exact necessary optimality conditions can be viewed as the generalizations of corresponding results for DC programming frameworks such as those in \cite[Proposition~4.6]{HT1999} and \cite[Theorem~6.1]{DNV2010}. The following example demonstrates how   optimality condition~\eqref{Fermat-rule-eq} works efficiently in both the {\it non-optimality} and {\it optimality}  settings.

\begin{exam} (i) [{\it Optimality setting}] Let $\mathcal C:=[-\frac{1}{2}, 0]$ and
consider the problem
\begin{equation}\label{example33a}
\min \{f_1(x)+f_2(x)\mid  x\in\mathcal C\},\tag{E3}\end{equation} where $f_i:\R\to\bar\R, i=1,2$ are given respectively by
$$f_1(x):= \begin{cases}\frac{1}{x+1}-1& \text{ \rm if } x>-1;\\
\infty& \text{ \rm otherwise,}\end{cases}\; f_2(x):=-|x|,\; x\in\R.$$
We can  check that $\bar x:=0$ is a local optimal solution to   problem \eqref{example33}. On the one hand, by direct calculation, we obtain $\partial_{\mathcal C} f_1(0)=\{-1\}$ and $ \partial_{\mathcal C} f_2(0)=\{1\}$, so $0\in \partial_{\mathcal C}f_1(0)+\partial_{\mathcal C}f_2(0).$
On the other hand, since all the assumptions in Theorem~\ref{thm33} are satisfied, we can employ  the exact necessary optimality condition in \eqref{Fermat-rule-eq} to conclude that
$0\in \partial_{\mathcal C}f_1(0)+\partial_{\mathcal C}f_2(0).$
(ii) [{\it Non-optimality setting}] Let $C:=(-\infty, 0]$ and consider the problem \begin{equation}\label{example32}
\min \{f_1(x)+f_2(x)\mid  x\in\mathcal C\},\tag{E4}\end{equation} where $f_i:\R\to\bar\R, i=1,2$ are given respectively by
$f_1(x):=e^x-1$ and $f_2(x):= -\vert x\vert$ for all $x\in \R.$

Considering $\bar x:=0$, we see that  all the  assumptions in Theorem~\ref{thm33} are fulfilled at $\bar x.$ Moreover, we have
$\partial_{\mathcal C} f_1(0)=\{1\} $ and $ \partial_{\mathcal C} f_2(0)=\{1\}$. Thus $0\notin \partial_{\mathcal C} f_1(0)+\partial_{\mathcal C} f_2(0).$ Invoking now
 Theorem~\ref{thm33}, we conclude that $\bar x$ is {\it not} a local optimal solution to   problem~\eqref{example32}.
\end{exam}
%%%%%

With the help of the approximate and exact optimality conditions obtained in Theorem~\ref{thm33}, we now establish {\it approximate} and {\it exact sum rules} for computing  the relative subdifferentials.

\begin{thm}\label{thm-sumrule}
Let $\Omega\subset X$ be closed and convex and $\bar x\in \Omega.$ Let $f_1: X\to\bar{\R}$ be locally Lipschitz relative to $\Omega$ around $\bar x$ with a modulus $\ell\ge 0$, and let $f_2: X\to \bar{\R}$ be lsc relative to $\Omega$ around $\bar x$.  Suppose that $J(T(x,\Omega))$ is convex for all $x$ near $\bar x$. (This assumption is automatically satisfied if $\Omega$ is a convex set in a Hilbert space $X$.) Then, for any  small $\epsilon\ge 0$, $\eta>\epsilon$,  and $x^*\in \widehat{\partial}^{\epsilon}_{\Omega}(f_1+f_2)(\bar x)$, there exist  $\bar\gamma>0$,  $\bar x_i\in \bar x+\eta\B$ with $\vert f_i(\bar x_i)-f_i(\bar x)\vert\le \eta$, $\eta_i>0$ and $\bar x_i^*\in \widehat{\partial}^{\eta_i}_{\Omega}f_i(\bar x_i),$ $i=1,2$ such that
\begin{equation}\label{fuzzy-sumrule}
x^*\in\bar x_1^*+\bar x_2^*+\bar{\gamma}\B_{X^*},
\end{equation} where $\bar\gamma\to 0$ and $\eta_i\to 0$ as $\eta\to 0.$ Consequently,
\begin{equation}\label{sum-rule-eq}
\partial_{\Omega}(f_1+f_2)(\bar x)\subset \partial_{\Omega}f_1(\bar x)+\partial_{\Omega}f_2(\bar x).
\end{equation}
\end{thm}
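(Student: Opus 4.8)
The exact inclusion \eqref{sum-rule-eq} will come from the fuzzy rule \eqref{fuzzy-sumrule} by a routine weak$^*$‑compactness argument (handled at the end), so the heart of the matter is \eqref{fuzzy-sumrule}. My plan is to turn $x^*\in\widehat\partial^{\epsilon}_{\Omega}(f_1+f_2)(\bar x)$ into an \emph{exact} local minimization problem of the type \eqref{SP}, apply Theorem~\ref{thm33}, and then ``peel off'' the auxiliary perturbation. First I would unwind the definition: $x^*\in\widehat\partial^{\epsilon}_{\Omega}(f_1+f_2)(\bar x)$ means $x^*\in J(T(\bar x,\Omega))$ and $(x^*,-1)\in\widehat N_{\epsilon}\big((\bar x,(f_1+f_2)(\bar x)),\epi(f_1+f_2)_\Omega\big)$. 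Reading off the $\limsup$ in the definition of $\widehat N_{\epsilon}$, and using that $f_1$ is continuous relative to $\Omega$ near $\bar x$ while $f_2$ is lsc relative to $\Omega$ near $\bar x$ (to control the $\R$‑component), one obtains, for every $\gamma>0$, a $\delta>0$ with
\[
(f_1+f_2)(x)\ \ge\ (f_1+f_2)(\bar x)+\langle x^*,x-\bar x\rangle-(\epsilon+\gamma)\Vert x-\bar x\Vert \quad\text{for all } x\in\Omega\cap\B(\bar x,\delta).
\]
Hence $\bar x$ is a local optimal solution of $\min\{g_1(x)+g_2(x)\mid x\in\Omega\}$ with $g_1:=f_1$ (locally Lipschitz relative to $\Omega$ around $\bar x$ with modulus $\ell$) and $g_2:=f_2-\langle x^*,\cdot-\bar x\rangle+(\epsilon+\gamma)\Vert\cdot-\bar x\Vert$ (lsc relative to $\Omega$ around $\bar x$, being $f_2$ plus two continuous functions). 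Theorem~\ref{thm33} applied to $g_1,g_2$ on $\Omega$ then yields, for every small $\eta'>0$, points $z_i\in\B(\bar x,\eta')\cap\Omega$ with $|g_i(z_i)-g_i(\bar x)|\le\eta'$, a number $\tilde\eta'\in(0,4\eta'(\ell+1))$, and $w_i^*\in\widehat\partial^{\tilde\eta'}_{\Omega}g_i(z_i)$, $i=1,2$, with $w_1^*+w_2^*=0$. Since $g_1=f_1$, I set $\bar x_1:=z_1$, $\eta_1:=\tilde\eta'$, $\bar x_1^*:=w_1^*\in\widehat\partial^{\eta_1}_{\Omega}f_1(\bar x_1)$, noting $|f_1(z_1)-f_1(\bar x)|\le\eta'$; no peeling is needed on the Lipschitz side.

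The remaining task is to convert $w_2^*\in\widehat\partial^{\tilde\eta'}_{\Omega}g_2(z_2)$ into an element of $\widehat\partial^{\eta_2}_{\Omega}f_2(z_2)$ close to $w_2^*+x^*$, and \emph{this is the main obstacle}. Assume $z_2\ne\bar x$ (if $z_2=\bar x$ the tangential obstruction below disappears, since then $x^*\in J(T(z_2,\Omega))$ already, and one only absorbs $\widehat\partial\Vert\cdot-\bar x\Vert(\bar x)=\B_{X^*}$ into $\bar\gamma$). Because the norm of $X$ is Fr\'echet differentiable off the origin, $\Vert\cdot-\bar x\Vert$ is $C^1$ at $z_2$ with derivative $j(z_2-\bar x):=J(z_2-\bar x)/\Vert z_2-\bar x\Vert$ of dual norm $1$. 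Peeling the linear term $-\langle x^*,\cdot-\bar x\rangle$ off $(g_2)_\Omega$ (a bi‑Lipschitz linear shear of the epigraph, at the cost of replacing $\tilde\eta'$ by $A\tilde\eta'$ with $A$ depending only on $\Vert x^*\Vert_*$) and then the $C^1$ term $(\epsilon+\gamma)\Vert\cdot-\bar x\Vert$ (which contributes only its gradient, up to an arbitrarily small $\epsilon$‑loss), one arrives at
\[
\xi:=w_2^*+x^*-(\epsilon+\gamma)\,j(z_2-\bar x)\ \in\ \widehat\partial_{\eta_2'}(f_2)_\Omega(z_2),\qquad \eta_2'\to 0\ \text{ as }\ \eta'\to 0.
\]
The snag is that $\xi$ need not lie in $J(T(z_2,\Omega))$, which is required for membership in $\widehat\partial^{\eta_2'}_{\Omega}f_2(z_2)$. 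Everything up to here is standard manipulation of $\epsilon$‑normal sets; the real work is repairing the tangential membership, and it is precisely here that the two convexity hypotheses are used.

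I would repair it as follows. Since $\Omega$ is convex, $\bar x-z_2\in T(z_2,\Omega)$, so by homogeneity of $J$ and the fact that $J$ maps the cone $T(z_2,\Omega)$ into $J(T(z_2,\Omega))$ one gets $-(\epsilon+\gamma)\,j(z_2-\bar x)=(\epsilon+\gamma)\,j(\bar x-z_2)\in J(T(z_2,\Omega))$. Moreover, writing $x^*=J(v)$ with $v\in T(\bar x,\Omega)=\cl\cone(\Omega-\bar x)$ and approximating $v$ by vectors $\lambda(\omega-\bar x)$, $\omega\in\Omega$, $\lambda\ge 0$, the vectors $\lambda(\omega-z_2)=\lambda(\omega-\bar x)+\lambda(\bar x-z_2)\in T(z_2,\Omega)$ converge to $v$ as $z_2\to\bar x$; bicontinuity of $J$ then gives $\kappa(z_2):=\mathrm{dist}\big(x^*,J(T(z_2,\Omega))\big)\to 0$ as $z_2\xrightarrow{\Omega}\bar x$. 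Choose $\tilde x^*(z_2)\in J(T(z_2,\Omega))$ with $\Vert x^*-\tilde x^*(z_2)\Vert_*\le\kappa(z_2)+\eta'$. As $w_2^*\in J(T(z_2,\Omega))$ and $J(T(z_2,\Omega))$ is a \emph{convex cone}, hence closed under addition, $\bar x_2^*:=w_2^*-(\epsilon+\gamma)\,j(z_2-\bar x)+\tilde x^*(z_2)\in J(T(z_2,\Omega))$, and $\bar x_2^*=\xi-(x^*-\tilde x^*(z_2))$ differs from $\xi$ by at most $\kappa(z_2)+\eta'$ in dual norm. Since $\widehat\partial_{\eta}h(z)+u^*\subseteq\widehat\partial_{\eta+\Vert u^*\Vert_*}h(z)$ (immediate from the definition of $\widehat N_\eta$), $\bar x_2^*\in\widehat\partial_{\eta_2}(f_2)_\Omega(z_2)$ with $\eta_2:=\eta_2'+\kappa(z_2)+\eta'$, whence $\bar x_2^*\in\widehat\partial^{\eta_2}_{\Omega}f_2(z_2)$. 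Finally $\bar x_1^*+\bar x_2^*=(w_1^*+w_2^*)-(\epsilon+\gamma)j(z_2-\bar x)+\tilde x^*(z_2)=\tilde x^*(z_2)-(\epsilon+\gamma)j(z_2-\bar x)$, so $\Vert x^*-(\bar x_1^*+\bar x_2^*)\Vert_*\le\kappa(z_2)+\eta'+(\epsilon+\gamma)=:\bar\gamma$; taking $\eta'$ small enough also gives $\bar x_i:=z_i\in\bar x+\eta\B$ and $|f_i(\bar x_i)-f_i(\bar x)|\le\eta$ (using $\eta>\epsilon$). Letting $\eta\to0$ forces $\epsilon\to0$, and choosing $\gamma,\eta'\to0$ accordingly makes $z_2\to\bar x$, $\tilde\eta'\to0$, $\kappa(z_2)\to0$, hence $\bar\gamma\to0$ and $\eta_i\to0$; this is \eqref{fuzzy-sumrule}.

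For \eqref{sum-rule-eq}, take $x^*\in\partial_{\Omega}(f_1+f_2)(\bar x)$ together with sequences $\epsilon_k\downarrow0$, $x_k\xrightarrow[f_1+f_2]{\Omega}\bar x$, $x_k^*\rightharpoonup^* x^*$ with $x_k^*\in\widehat\partial^{\epsilon_k}_{\Omega}(f_1+f_2)(x_k)$. Applying \eqref{fuzzy-sumrule} at $x_k$ with a shrinking parameter gives $\bar x_{ik}\to\bar x$, $\eta_{ik}\downarrow0$, $\bar x_{ik}^*\in\widehat\partial^{\eta_{ik}}_{\Omega}f_i(\bar x_{ik})$ with $\Vert x_k^*-(\bar x_{1k}^*+\bar x_{2k}^*)\Vert_*\to0$, and $f_i(\bar x_{ik})\to f_i(\bar x)$ (use continuity of $f_1$ relative to $\Omega$ to split $f_1(x_k)\to f_1(\bar x)$ and then $f_2(x_k)\to f_2(\bar x)$). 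By Lemma~\ref{lem31}(ii), $\Vert\bar x_{1k}^*\Vert_*\le\ell+\eta_{1k}(1+\ell)$ is bounded, so $\bar x_{2k}^*$ is bounded since $x_k^*$ is; passing to weak$^*$‑convergent subsequences $\bar x_{ik}^*\rightharpoonup^*\bar x_i^*$ and invoking the definition of the relative limiting subdifferential yields $\bar x_i^*\in\partial_{\Omega}f_i(\bar x)$, and uniqueness of the weak$^*$ limit forces $x^*=\bar x_1^*+\bar x_2^*$. I expect the genuinely delicate point throughout to be exactly the one flagged above — keeping the peeled subgradient inside the relative tangential set $J(T(z_2,\Omega))$ — which is why both the convexity of $\Omega$ (so that $\bar x-z_2$ is tangent and $x^*$ is ``nearly tangential'' at nearby points) and the convexity of $J(T(x,\Omega))$ near $\bar x$ (so that such tangential corrections may be added) are indispensable.
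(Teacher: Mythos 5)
Your argument is correct in substance and reaches the same conclusions, but it takes a genuinely different route from the paper. The paper absorbs the perturbation $-\langle x^*,\cdot-\bar x\rangle+\epsilon(\cdots)\Vert\cdot-\bar x\Vert$ into the \emph{Lipschitz} function $f_1$, forming $\varphi$, applies Theorem~\ref{thm33} to $\varphi+f_2$, and then must apply Theorem~\ref{thm33} a \emph{second} time to split $\varphi_1$ back into $f_1$ plus the perturbation, finishing with a normal-cone/duality-mapping estimate (via tangent sequences $v_k=(\tilde x_k-\tilde x_2)/t_k$ and convexity of $\Omega$) to show $\hat x_2^*+x_1^*+x^*\in\bar\gamma\B_{X^*}$. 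You instead absorb the perturbation into the \emph{lsc} function $f_2$, apply Theorem~\ref{thm33} only once, and peel the perturbation off the resulting subgradient directly, exploiting Fr\'echet differentiability of the norm at $z_2\ne\bar x$ (with the degenerate case $z_2=\bar x$ handled separately); the tangential membership is then repaired by adding correctors $(\epsilon+\gamma)j(\bar x-z_2)$ and $\tilde x^*(z_2)$ from $J(T(z_2,\Omega))$, which is exactly where convexity of $\Omega$ (so that $\bar x-z_2$ and $\omega-z_2$ are tangent at $z_2$, giving $\kappa(z_2)\to 0$) and convexity of $J(T(z_2,\Omega))$ (so that tangential correctors can be added) enter. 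Your route is shorter and isolates the role of the two convexity hypotheses more cleanly; the paper's route keeps all peeling on the Lipschitz side, where subgradients are automatically bounded by Lemma~\ref{lem31}(ii), and never differentiates the norm at points other than where it is already smooth in the auxiliary functions. Your reduction of the exact rule \eqref{sum-rule-eq} from the fuzzy rule is the same weak$^*$-compactness argument as the paper's (and you correctly note the need to split $f_i(x_k)\to f_i(\bar x)$ using the relative Lipschitz continuity of $f_1$, a point the paper glosses over).

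One small quantitative remark: when converting $x^*\in\widehat{\partial}^{\epsilon}_{\Omega}(f_1+f_2)(\bar x)$ into the local minimization inequality, the geometric-to-analytic passage (\cite[Theorem~1.86 and Proposition~1.84]{Mor06}, as used in the paper) produces the coefficient $\epsilon+\frac{\epsilon(1+\Vert x^*\Vert)}{1-\epsilon}+\gamma$ rather than your $\epsilon+\gamma$. This propagates harmlessly through your estimates, since the corrected constant still vanishes as $\epsilon<\eta\to 0$, but it should be stated to keep the bookkeeping honest. The same conversion factors (bounded by $1+\Vert w_2^*\Vert\le 1+\ell+\tilde\eta'(1+\ell)$) appear when you peel the shear off the epigraph of $(g_2)_\Omega$, and your constant $A$ should be expressed in terms of them.
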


\begin{proof}
From the local Lipschitz continuity relative to $\Omega$ of $f_1$ around $\bar x$ with a modulus  $\ell$, we find $\delta>0$ such that
\begin{align}\label{thm33-eq2}
\vert f_1(x)-f_1(y)&\le \ell\Vert x-y\Vert \; \text{ \rm for all } x,y\in \Omega\cap\B(\bar x,\delta).
\end{align}
For any $\epsilon\ge 0, \eta>0$ with $\eta<\min\{\frac{\delta}{3},\frac{1}{4(\ell+1)}\},$ we pick $x^*\in \widehat{\partial}^{\epsilon}_{\Omega}(f_1+f_2)(\bar x)=\widehat{\partial}_{\epsilon}(f_1+f_2)_{\Omega}(\bar x)\cap J(T(\bar x,\Omega))$. By \cite[Proposition~2.2]{MWY2023a}, we find  $\delta_\eta>0$ with $\delta_\eta\le \delta$ such that
\begin{equation}\label{thm42-eq002}
x^* \in J(T(x,\Omega))+\eta\B_{X^*}\; \text{ \rm for all } x\in \B(\bar x,\delta_\eta)\cap\Omega.
\end{equation}
Moreover, it implies from \cite[Theorem~1.86 and Proposition 1.84]{Mor06} that $\bar x$ is a local minimizer of the following function:
$$(f_1+f_2)_{\Omega}(x)-\langle x^*, x-\bar x\rangle+\left(\epsilon+\frac{\epsilon(1+\Vert x^*\Vert)}{1-\epsilon}\right)\Vert x-\bar x\Vert,\, x\in X,$$ which follows that the following function
$$\psi(x):=\left(f_1(x)-\langle x^*, x-\bar x\rangle+\epsilon\left(1+\frac{1+\Vert x^*\Vert}{1-\epsilon}\right)\Vert x-\bar x\Vert\right)+f_2(x),\; x\in X,$$ attains a local minimum on $\Omega$ at $\bar x.$ We observe that the function $$\varphi(x):=f_1(x)-\langle x^*, x-\bar x\rangle+\epsilon\left(1+\frac{1+\Vert x^*\Vert}{1-\epsilon}\right)\Vert x-\bar x\Vert,\; x\in X,$$ is locally Lipschitz relative to $\Omega$ around $\bar x$ with the modulus $\tilde{\ell}:=\epsilon+\Vert x^*\Vert+\epsilon\left(1+\frac{1+\Vert x^*\Vert}{1-\epsilon}\right).$

Let $\tilde\delta>0$ be such that  $\tilde\delta\le \min\{\delta_\eta,\frac{\eta}{2}\}$. According to Theorem~\ref{thm33}, we see that there exist $\tilde\eta\in (0,4\tilde\delta(1+\tilde\ell))$, $x_1, x_2\in \B(\bar x,\tilde\delta)$ with $\vert\varphi(x_1)-\varphi(\bar x)\vert\le \tilde\delta$, $\vert f_2(x_2)-f_2(\bar x)\vert\le \tilde\delta$ and $x_1^*\in \widehat{\partial}^{\tilde{\eta}}_{\Omega}\varphi(x_1)$, $x_2^*\in \widehat{\partial}^{\tilde{\eta}}_{\Omega}f_2(x_2)$ such that
\begin{align}
   x_1^*+x_2^*=0. \label{thm42-eq001}.
\end{align}
Since $x_1^*\in \widehat{\partial}^{\tilde{\eta}}_{\Omega}\varphi(x_1)=\widehat{\partial}_{\tilde{\eta}}\varphi_{\Omega}(x_1)\cap J(T(x_1,\Omega)),$ for any $\epsilon>0,$ we see that there exist $\delta_1\in (0,\tilde\delta)$ such that
\begin{align*}
\langle x_1^*, x-x_1\rangle -(\varphi(x)-\varphi(x_1))&\le (\epsilon+\tilde{\eta})(\Vert x-x_1\Vert+\vert \varphi(x)-\varphi(x_1)\vert),\; \forall x\in \Omega\cap\B(x_1,\delta_1),
\end{align*}
which follows that \begin{align}
\langle x_1^*+x^*, x-x_1\rangle -(\varphi_1(x)-\varphi_1(x_1))&\le (\epsilon+\tilde{\eta})\left(1+\ell+\Vert x^*\Vert+\eta\Big(1+\dfrac{1+\Vert x^*\Vert}{1-\eta}\Big)\right)\Vert x-x_1\Vert\nonumber\\
&=(\epsilon+\tilde{\eta})(1+\tilde{\ell})\Vert x-x_1\Vert, \; \forall x\in \Omega\cap\B(x_1,\delta_1),
\label{thm42-eq003}
\end{align}
where $\varphi_{1}(x):=f_1(x)+\eta\left(1+\frac{1+\Vert x^*\Vert}{1-\eta}\right)\Vert x-\bar x\Vert.$  From \eqref{thm42-eq002}, we can find $\tilde x^*\in J(T(x_1,\Omega))$ and $b^*\in \B_{X^*}$ such that $x^*=\tilde x^*+\eta b^*$, which together with \eqref{thm42-eq003} yields that
\begin{align*}
\langle x_1^*+\tilde x^*, x-x_1\rangle -(\varphi_1(x)-\varphi_1(x_1))&= \langle x_1^*+x^*, x-x_1\rangle -(\varphi_1(x)-\varphi_1(x_1))-\eta \langle b^*, x-x_1\rangle\\
&\le (\epsilon+\tilde{\eta}+\eta)(1+\tilde{\ell})\Vert x-x_1\Vert
\end{align*}
for all $x\in \Omega\cap\B(x_1,\delta_1).$ As $\epsilon>0$ was arbitrarily taken, we assert that
\begin{equation*} %\label{thm33-sumrule-eq2a}
\liminf_{x\rightarrow x_1}\dfrac{(\varphi_{1})_\Omega(x)-(\varphi_{1})_\Omega(x_1)-\langle x_1^*+\tilde x^*, x-x_1\rangle}{\Vert x-x_1\Vert}\ge - \bar{\eta}:=-(\eta+\tilde{\eta})(1+\tilde{\ell}),
\end{equation*}
which demonstrates that $x_1^*+\tilde x^*\in \widehat{\partial}_{\bar \eta}(\varphi_1)_{\Omega}(x_1)$ due to \cite[Definition~1.83]{Mor06} and \cite[Theorem~1.86]{Mor06}. Moreover, by our assumption that $J(T(x,\Omega))$ is convex for all $x$ near $\bar x$, we can assume that $J(T(x_1,\Omega))$ is convex, which implies that $x_1^*+\tilde x^*\in J(T(x_1,\Omega))$ as both $x_1^*,\tilde x^*\in J(T(x_1,\Omega))$ and $J(T(x_1,\Omega))$ is a convex cone. Hence, $x_1^*+\tilde x^*\in \widehat{\partial}^{\bar\eta}_{\Omega}\varphi_1(x_1).$ Obverse that $\varphi_1$ is local Lipschitz relative to $\Omega$ around $x_1$ with the modulus $\ell+\eta\left(1+\frac{1+\Vert x^*\Vert}{1-\eta}\right)$. According to Lemma~\ref{lem31}, we derive that
\begin{equation}\label{thm42-eq01}
\Vert x_1^*+\tilde x^*\Vert\le \bar\ell:=\ell+\eta\left(1+\frac{1+\Vert x^*\Vert}{1-\eta}\right)+\bar\eta\left(1+\ell+\eta\left(+\frac{1+\Vert x^*\Vert}{1-\eta}\right)\right).
\end{equation}

On the other hand, by the fact $x_1^*+\tilde x^*\in \widehat{\partial}^{\bar\eta}_{\Omega}\varphi_1(x_1)=\widehat{\partial}_{\bar\eta}{\varphi_1}_{\Omega}(x_1)\cap J(T(x_1,\Omega))$ and \cite[Proposition 1.84]{Mor06} (with $\epsilon:=\bar\eta$ and $\gamma:=\eta$), we assert that $x_1$ is a
local minimizer of the following function
$$\left(\eta\Big(1+\frac{1+\Vert x^*\Vert}{1-\eta}\Big)\Vert x-\bar x\Vert -\langle x_1^*+x^*,x-x_1\rangle+(\bar{\eta}+\eta)\Vert x-x_1\Vert\right) +f_1(x) +\delta_{\Omega}(x),$$ which follows that $x_1$ is a local minimizer on $\Omega$ of the function
$$\left(\eta\Big(1+\frac{1+\Vert x^*\Vert}{1-\eta}\Big)\Vert x-\bar x\Vert -\langle x_1^*+x^*,x-x_1\rangle+(\bar{\eta}+\eta)\Vert x-x_1\Vert\right) +f_1(x).$$ Define the function
$$\varphi_2(x):=\eta\Big(1+\frac{\eta(1+\Vert x^*\Vert)}{1-\eta}\Big)\Vert x-\bar x\Vert -\langle x_1^*+x^*,x-x_1\rangle+(\bar{\eta}+\eta)\Vert x-x_1\Vert.$$
Note from \eqref{thm33-eq2} that $f_1$ is locally Lipschitz relative to $\Omega$ around $x_1$. Using \cite[Proposition~2.2]{MWY2023a}, one sees that there exists $\delta_2>0$ such that
\begin{equation}\label{thm42-eq02}
x_1^*+x^*\in J(T(x,\Omega))+\eta\B_{X^*} \; \forall x\in \B(x_1,\delta_2)\cap\Omega.\end{equation}
Set $\tilde\delta_2:=\min\{\delta_1,\delta_2\}$. By using Theorem~\ref{thm33}  (with $\eta:=\tilde\delta_2$), we find $\check{\eta}\in (0,4\tilde\delta_2(1+\ell)),$ $\tilde x_1, \tilde x_2\in \B(x_1,\tilde\delta_2)$ with $\vert f_1(\tilde x_1)-f_1(x_1)\vert\le \tilde\delta_2$, $\vert \varphi_2(\tilde x_2)-\varphi_2(x_1)\vert\le \tilde\delta_2$ and $\hat{x}_1^*\in \widehat{\partial}^{\check{\eta}}_{\Omega}f_1(\tilde x_1)$, $\hat{x}_2^*\in \widehat{\partial}^{\check{\eta}}_{\Omega}\varphi_2(\tilde x_2)$ such that
\begin{align}\label{thm33-eq003}
\hat{x}_1^*+\hat{x}_2^*=0.
\end{align}
Since $\hat{x}_2^*\in \widehat{\partial}^{\check{\eta}}_{\Omega}\varphi_2(\tilde x_2)=\widehat{\partial}^{\check{\eta}}{(\varphi_2)}_{\Omega}(\tilde x_2)\cap J(T(\tilde x_2,\Omega)),$ then there exists, for any $\epsilon>0,$ $\delta_3\in (0,\tilde\delta_2)$ such that
\begin{align*}
\langle \hat{x}^*_2, x-\tilde x_2\rangle-(\varphi_2(x)-\varphi_2(\tilde x_2))&\le (\epsilon+\check{\eta})\left(\Vert x-\tilde x_2\Vert+\vert \varphi_2(x)-\varphi_2(\tilde x_2)\vert\right),\; \forall x\in \B(\tilde x_2,\delta_3)\cap \Omega,
\end{align*}
which follows that
\begin{align*}
&\langle \hat{x}^*_2+x_1^*+x^*, x-\tilde x_2\rangle\\
&\le \left(\eta\Big(1+\frac{1+\Vert x^*\Vert}{1-\eta}\Big)+(\bar{\eta}+\eta)+ (\epsilon+\check{\eta})\Big(\eta\big(1+\frac{1+\Vert x^*\Vert}{1-\eta}\big)+\Vert x_1^*+x^*\Vert+\bar{\eta}+\eta\Big)\right)\Vert x-\tilde x_2\Vert\\
&\le \left(\eta\Big(1+\frac{1+\Vert x^*\Vert}{1-\eta}\Big)+(\bar{\eta}+\eta)+ (\epsilon+\check{\eta})\Big(\eta\big(1+\frac{1+\Vert x^*\Vert}{1-\eta}\big)+\bar\ell+\bar{\eta}+\eta\Big)\right)\Vert x-\tilde x_2\Vert
\end{align*}
for all $x\in \B(\tilde x_2,\delta_3)\cap \Omega,$ where the final inequality holds due to \eqref{thm42-eq01}. Setting
$$\gamma:=\eta\Big(1+\frac{1+\Vert x^*\Vert}{1-\eta}\Big)+(\bar{\eta}+\eta)+ \check{\eta}\Big(\eta\big(1+\frac{1+\Vert x^*\Vert}{1-\eta}\big)+\bar\ell+\bar{\eta}+\eta\Big),$$ one has $\hat x_2^*+x_1^*+x^*\in \widehat{N}_{\gamma}(\tilde x_2,\Omega)$. By the convexity of $\Omega,$ we derive \begin{equation}\label{thm42-eq004}
\hat x_2^*+x_1^*+x^*\in\widehat{N}(\tilde x_2,\Omega)+\gamma\B_{X^*}.
\end{equation}
In view of  \eqref{thm42-eq02}, one has $x_1^*+x^*\in J(T(\tilde x_2,\Omega))+\eta\B_{X^*},$ which means that there exists $b_2\in\B_{X^*}$ such that $\hat x_2^*+x_1^*+x^*+\eta b_2\in J(T(\tilde x_2,\Omega)).$ This implies from the definition of $J$ that there exists $v\in T(\tilde x_2,\Omega)$ such that $$\langle\tilde x_2^*+x_1^*+x^*,v\rangle=\Vert \tilde x_2^*+x_1^*+x^*\Vert^2=\Vert v\Vert^2.$$ Moreover, we find sequences $t_k\to 0^+$ and $v_k\to v$ such that $\tilde x_k:=\tilde x_2+t_kv_k\in \Omega,$ which implies that $v_k=\frac{\tilde x_k-\tilde x_2}{t_k}.$ Thus
\begin{equation}\label{thm42-eq005}
\Vert \tilde x_2^*+x_1^*+x^*\Vert^2=\langle \tilde x_2^*+x_1^*+x^*, v\rangle=\lim_{k\to\infty}\langle \tilde x_2^*+x_1^*+x^*, v_k\rangle%=\lim_{k\to\infty}\langle \tilde x_2^*+x_1^*+x^*, \frac{x_k-\tilde x_2}{t_k}\rangle.
\end{equation}

On the other hand, from \eqref{thm42-eq004}, we can find $b_3\in \B_{X^*}$ such that $\hat x_2^*+x_1^*+x^*+\gamma b_3\in \widehat{N}(\tilde x_2,\Omega),$ which follows that
$$
\langle \hat x_2^*+x_1^*+x^*+\gamma b_3, v_k\rangle=\frac{1}{t_k}\langle \hat x_2^*+x_1^*+x^*+\gamma b_3, \tilde x_k-\tilde x_2\rangle\le 0,
$$
due to the convexity of $\Omega.$ This gives us that
$$
\langle  \hat x_2^*+x_1^*+x^*, v_k\rangle\le -\bar\gamma\langle b_3,v_k\rangle\le \gamma \Vert v_k\Vert,
$$
which follows that $\langle  \hat x_2^*+x_1^*+x^*, v\rangle\le \gamma\Vert v\Vert.$  Picking \eqref{thm42-eq005}, \eqref{thm42-eq001}, and \eqref{thm33-eq003} into account and setting $\bar\gamma:=\sqrt{\gamma\Vert v\Vert}$, we obtain
$$
\hat x_2^*+x_1^*+x^*=-\hat x_1^*-x_2^*+x^*\in \bar\gamma\B_{X^*}.
$$
Setting $\eta_1:=\check{\eta},$ $\eta_2:=\tilde{\eta}$, $\bar x_1:=\tilde{x}_1\in \B(x_1,\tilde\delta_2)\subset \B(x_1,\frac{\eta}{2})\subset\B(\bar x,\eta), \bar x_2:=x_2\in\B(\bar x,\eta)$, $\bar x_1^*:=\hat{x}_1^*\in \widehat{\partial}^{\eta_1}_{\Omega}f_1(\bar x_1)$, and $\bar x_2^*\in \widehat{\partial}^{\eta_2}_{\Omega}f_2(\bar x_2)$, we obtain  relation \eqref{fuzzy-sumrule}. Note that
$$0<\eta_1=\check{\eta}<4\tilde\delta_2(1+\ell)<2\eta(1+\ell),$$
$$
0<\eta_2=\tilde\eta<2\eta\left(1+\ell+\Vert x^*\Vert+\eta\left(1+\frac{1+\Vert x^*\Vert}{1-\eta}\right)\right),
$$
and
\begin{align*}
\bar\gamma&:=\eta\Big(1+\frac{1+\Vert x^*\Vert}{1-\eta}\Big)+(\bar{\eta}+\eta)+ \check{\eta}\Big(\eta\big(1+\frac{1+\Vert x^*\Vert}{1-\eta}\big)+\bar\ell+\bar{\eta}+\eta\Big).
\end{align*}
Thus $\bar\gamma\to 0^+$, $\eta_i\to 0^+$ for $i=1,2$ as $\eta\to 0.$

It remains to prove inclusion \eqref{sum-rule-eq}. Taking $x^*\in \partial_{\Omega}(f_1+f_2)(\bar x),$ one sees that there exist by the definition sequences $\epsilon_k\downarrow 0,$ $x_k\xrightarrow[f]{\Omega}\bar x,$  $x_k^* \rightharpoonup x^*$ such that $x_k^*\in \widehat{\partial}^{\epsilon_k}_{\Omega}(f_1+f_2)(x_k)$ for all $k\in \N$. Without loss of generality, we can assume that $f_1$ is locally Lipschitz continuous relative to $\Omega$ around $x_k$ with the same modulus $\ell$ for any $k\in \N.$ Thus there exist $x_{ik}\in \B(x_k,\epsilon_k)$, $x_{ik}^*\in \widehat{\partial}^{\epsilon_{ik}}_{\Omega}f(x_{ik}),$ for $i=1,2$ and $b_k\in\B_{X^*}$ such that
\begin{equation}\label{thm-sumrule-eq4}
x_{k}^* = x_{1k}^*+x_{2k}^*+\bar{\gamma}_kb_k,\; \forall k\in \N,
\end{equation}
where $\bar\gamma_k\to 0^+,$ $\epsilon_{ik}\to 0^+$ as $\epsilon_k\to 0^+.$ We can also assume that $f_1$ is locally Lipschitz continuous relative to $\Omega$ around $x_{1k}$ for all $k\in \N.$  Using \eqref{lem31}, one has $\Vert x_{1k}^*\Vert\le \ell+\epsilon_{1k}(1+\ell),$ which follows that $\Vert x_{1k}^*\Vert$ is bounded. Without loss of generality, we can assume that $\epsilon_{ik}\downarrow 0$ for $i=1,2$, $x_{1k}^*\rightharpoonup^* x_1^*$ and $b_k\rightharpoonup^* b$ for some $x_1^*\in X^*$ and $b\in \B_{X^*}.$ Combining this with $x_k^*\rightharpoonup^* x^*$ and \eqref{thm-sumrule-eq4}, we can assume that $x_{2k}^*\rightharpoonup^* x_2^*$ for some $x_2^*\in X^*.$ Thus it implies from the definition that $x_1^*\in \partial_{\Omega} f_1(\bar x)$ and $x_2^*\in \partial_{\Omega}f_2(\bar x).$ Passing to the limit in \eqref{thm-sumrule-eq4} as $k\to \infty,$ one has
$
x^*=x_1^*+x_2^*\in  \partial_{\Omega} f_1(\bar x)+ \partial_{\Omega}f_2(\bar x),
$
which means that
$$
\partial_{\Omega}(f_1+f_2)(\bar x)\subset \partial_{\Omega} f_1(\bar x)+ \partial_{\Omega}f_2(\bar x).
$$
Hence the proof of theorem is complete.
\end{proof}

\section{Mean Valued Theorems and Subdifferential Monotonicity via Relative Subdifferentials}

This section is devoted to establishing  mean value theorems and subdifferential  monotonicities via the relative subdifferentials. Our results are established under mild assumptions such as the lsc relative to a set and the locally Lipschitz continuity relative to a set,  so they are applicable  for a wider class of functions.

\begin{thm}\label{thm33a}
Let $f: X\to\bar{\R}$ be finite at both points $a\ne b$, and let $f$ be properly lsc relative to $[a,b],$ where $a\in X$ and $b\in X.$ Assume that for any $c\in [a,b)$ at which the function
$$\varphi(x):=f(x)+\dfrac{f(b)-f(a)}{\Vert b-a\Vert}\Vert x-b\Vert$$
attains its minimum on $[a,b]$. Then, for any sequence $\epsilon_k\downarrow 0,$ there are sequences $\tilde\epsilon_k\to 0^+$, $x_k\xrightarrow[f]{[a,b]}c$ and $x_k^*\in \widehat{\partial}_{[a,b]}^{\tilde\epsilon_k}f(x_k)$ satisfying the following conditions:
\begin{align}
\liminf_{k\to\infty}\langle x_k^*, b-a\rangle\ge f(b)-f(a),\label{thm33a-eq00}\\
\label{thm33a-eq0}
\liminf_{k\to\infty}\langle x_k^*, b-x_k\rangle\ge \dfrac{\Vert b-c\Vert}{\Vert b-a\Vert}(f(b)-f(a)).
\end{align}
If, in addition, $c\ne a,$ then
\begin{align}\label{thm33a-eq000}
\lim_{k\to\infty}\langle x_k^*, b-a\rangle = f(b)-f(a).
\end{align}
\end{thm}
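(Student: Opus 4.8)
The plan is to exploit that $c\in[a,b)$ is a local (in fact global) minimizer of $\varphi$ on $[a,b]$ and apply the Fermat-type optimality rule of Proposition~\ref{thm2} to $\varphi$ relative to the segment $\Omega:=[a,b]$. First I would observe that $\varphi$ is properly lsc relative to $[a,b]$ (it is the sum of such an $f$ and a function that is Lipschitz relative to $[a,b]$, hence continuous there), so Proposition~\ref{thm2} gives $0\in\widehat\partial^{\epsilon}_{[a,b]}\varphi(c)$ for every $\epsilon\ge0$, and in particular, for any prescribed sequence $\epsilon_k\downarrow0$, one has $0\in\widehat\partial^{\epsilon_k}_{[a,b]}\varphi(c)$ for each $k$. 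Now I would split $\varphi=f+g$ with $g(x):=\frac{f(b)-f(a)}{\|b-a\|}\|x-b\|$ and apply the fuzzy sum rule (Theorem~\ref{thm-sumrule}, with $f_1:=g$ locally Lipschitz relative to $[a,b]$ and $f_2:=f$ lsc relative to $[a,b]$; the convexity requirement on $J(T(x,[a,b]))$ holds since $[a,b]$ is convex and, working in the reflexive uniformly convex setting, $T(x,[a,b])$ is a convex cone and $J$ is single-valued and positively homogeneous, so its image of a convex cone is again a convex cone). This produces, for each $k$, points $x_k\xrightarrow[f]{[a,b]}c$, a point $\bar x_{1k}$ near $c$ on $[a,b]$ with $\|\bar x_{1k}-c\|$ small, $x_k^*\in\widehat\partial^{\tilde\epsilon_k}_{[a,b]}f(x_k)$, a subgradient $u_k^*\in\widehat\partial^{\eta_k}_{[a,b]}g(\bar x_{1k})$ and an error term $\bar\gamma_k\B_{X^*}$ with $\tilde\epsilon_k,\eta_k,\bar\gamma_k\to0$, such that $0\in x_k^*+u_k^*+\bar\gamma_k\B_{X^*}$.

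Next I would compute the relative subdifferential of the Lipschitz term $g$ on the segment. Writing $\beta:=\frac{f(b)-f(a)}{\|b-a\|}$ and $d:=\frac{b-a}{\|b-a\|}$ for the unit direction, at any interior point $x$ of $[a,b]$ the tangent cone $T(x,[a,b])$ equals $\R d$, and on the segment $\|x-b\|=\|b-a\|-\langle d,x-a\rangle$ is affine, so $g$ restricted to the segment is the affine function $x\mapsto\beta(\|b-a\|-\langle d,x-a\rangle)$; intersecting the regular subdifferential of this affine restriction with $J(\R d)$ pins $u_k^*$ (up to the $\eta_k$-slack) to a vector of the form $-\beta d+o(1)$ measured against directions in $\R d$. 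More precisely, I would extract the clean estimate that $\langle u_k^*,d\rangle\to-\beta$, equivalently $\langle u_k^*,b-a\rangle\to-(f(b)-f(a))$, and likewise $\langle u_k^*,b-\bar x_{1k}\rangle$ is controlled. Feeding this back into $x_k^*=-u_k^*+\bar\gamma_k\B_{X^*}$ and testing against the fixed vector $b-a$ yields $\liminf_k\langle x_k^*,b-a\rangle\ge f(b)-f(a)$, which is~\eqref{thm33a-eq00}; and testing against $b-x_k$ (using $\|b-x_k\|\to\|b-c\|$, and that on the segment $\langle d,b-x_k\rangle=\|b-x_k\|$) gives the ratio $\frac{\|b-c\|}{\|b-a\|}$ in front of $f(b)-f(a)$, which is~\eqref{thm33a-eq0}. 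For the last assertion, when $c\ne a$ the point $c$ lies in the interior of $[a,b]$ (since already $c\ne b$), so both $+d$ and $-d$ are tangent directions at $x_k$ for $k$ large; applying the two-sided $\epsilon$-normal estimate at $x_k$ in both tangent directions squeezes $\langle x_k^*,b-a\rangle$ from above as well, upgrading the $\liminf$ inequality to the genuine limit~\eqref{thm33a-eq000}.

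The main obstacle is the bookkeeping in the fuzzy sum rule: Theorem~\ref{thm-sumrule} moves the base point of $f$ from $c$ to a nearby $x_k$ and introduces the error ball $\bar\gamma_k\B_{X^*}$, so I must carefully verify that the quantities $\tilde\epsilon_k$, $\eta_k$, $\bar\gamma_k$ can all be driven to $0$ \emph{simultaneously} as $\epsilon_k\downarrow0$, and that $f(x_k)\to f(c)$ so that the convergence $x_k\xrightarrow[f]{[a,b]}c$ really holds (this is where the lsc-relative-to-$[a,b]$ hypothesis is used). The second delicate point is the computation of $\widehat\partial^{\eta}_{[a,b]}g$: I must confirm that intersecting with $J(T(x,[a,b]))$ does not destroy the directional information $-\beta d$, which relies on $J(\R d)$ being a full line (single-valuedness and homogeneity of $J$) rather than a proper subcone; once that is in hand, the three displayed inequalities follow by routine manipulation. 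I expect no further difficulties, as the interior-versus-endpoint dichotomy for $c$ is exactly what separates the one-sided estimates~\eqref{thm33a-eq00}--\eqref{thm33a-eq0} from the two-sided conclusion~\eqref{thm33a-eq000}.
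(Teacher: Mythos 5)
Your proposal is correct and follows essentially the same route as the paper: both decompose $\varphi=f+g$ with $g(x):=\frac{f(b)-f(a)}{\Vert b-a\Vert}\Vert x-b\Vert$, invoke the approximate sum optimality near $c$ (the paper applies Theorem~\ref{thm33} directly to $\min\{f+g\}$ over $[a,b]$, you reach the same approximate decomposition of $0$ via Proposition~\ref{thm2} plus the fuzzy sum rule of Theorem~\ref{thm-sumrule}, whose proof itself rests on Theorem~\ref{thm33}), and then exploit the affine behaviour of $g$ along the segment --- the paper by testing the convex subgradient inequality at $x=b$ and $x=a$, you by the equivalent one- or two-sided directional estimate in $\pm d$. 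The only slip is in your closing sentence: the two-sided squeeze when $c\ne a$ must be applied to $u_k^*\in\widehat{\partial}^{\eta_k}_{[a,b]}g(\bar x_{1k})$ and transferred to $x_k^*$ through $x_k^*\in -u_k^*+\bar\gamma_k\B_{X^*}$ (since $f$ itself, being merely lsc, admits no two-sided estimate at $x_k$), but that is exactly what your second paragraph already does.
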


\begin{proof}
Since $c$ is a local solution to problem $\min_{x\in[a,b]}\varphi(x)$, for each $\epsilon_k\downarrow 0$, we find, by  Theorem~\ref{thm33}, sequences $\tilde\epsilon_k\in \Big(0,4\epsilon_k(1+\frac{f(b)-f(a)}{\Vert b-a\Vert})\Big),$ $\{x_{ik}\}\subset\B(c, \epsilon_k)\cap [a,b],$ $i=1,2$ and $x_{k}^*\in \widehat{\partial}^{\tilde\epsilon_k}_{[a,b]}f(x_{1k})\cap[-\widehat{\partial}^{\tilde\epsilon_k}_{[a,b]}g(x_{2k})]$ with $g(x):=\frac{f(b)-f(a)}{\Vert b-a\Vert}\Vert x-b\Vert$ for any $x\in X.$ This implies that $-x_k^*\in \widehat{\partial}^{\tilde\epsilon_k}g_{[a,b]}(x_{2k}).$ According to the convexity of $g,$ the definition of geometric $\epsilon$-subdifferentials, and \cite[Proposition~1.3]{Mor06}, one has
\begin{equation}\label{thm33a-eq1}
\langle -x_k^*, x-x_{2k}\rangle\le g(x)-g(x_{2k})+\tilde\epsilon_k(\Vert x-x_{2k}\Vert+\vert g(x)-g(x_{2k})\vert)\; \forall x\in [a,b].
\end{equation}
Let $\lambda_{ik}\in (0,1]$ satisfy $x_{ik}:=\lambda_{ik}a+(1-\lambda_{ik}b)$ for $i=1,2.$ Then $b-x_{ik}=\lambda_{ik}(b-a)$ for $i=1,2.$ By taking $x=b$ into \eqref{thm33a-eq1}, we derive
\begin{align*}
\lambda_{2k}\langle x_k^*,b-a\rangle&\ge g(x_{2k})-g(b)-\tilde\epsilon_k(\lambda_{2k}\Vert b-a\Vert+\vert g(b)-g(x_{2k}\vert)\\
&=\lambda_{2k}\dfrac{f(b)-f(a)}{\Vert b-a\Vert}\Vert b-a\Vert -\tilde\epsilon_k\lambda_{2k}\left(\Vert b-a\Vert+\left\vert \dfrac{f(b)-f(a)}{\Vert b-a\Vert}\Vert b-a\Vert\right\vert\right)\\
&=\lambda_{2k}\left(f(b)-f(a)-\tilde\epsilon_k(\Vert b-a\Vert+\vert f(b)-f(a)\vert)\right),
\end{align*} which is equivalent to
\begin{align}\label{thm33a-eq2}
\langle x_k^*,b-a\rangle&\ge f(b)-f(a)-\tilde\epsilon_k(\Vert b-a\Vert+\vert f(b)-f(a)\vert).
\end{align}
This combines with the lsc and bounded property on $[a,b]$ of the function $\frac{\Vert b-\cdot\Vert}{\Vert b-a\Vert}$ to imply that
$$\liminf_{k\to\infty}\langle x_k^*,b-a\rangle\ge f(b)-f(a).$$ Thus \eqref{thm33a-eq00} holds true.

We now prove \eqref{thm33a-eq0}. Since $(b-x_{1k})\Vert b-a\Vert = \Vert b-x_{1k}\Vert(b-a),$ according to \eqref{thm33a-eq2}, we have
\begin{align*}
\liminf_{k\to\infty}\langle x^*, b-x_{1k}\rangle&=\liminf_{k\to\infty}\left(\dfrac{\Vert b-x_{1k}\Vert}{\Vert b-a\Vert}\langle x^*_k, b-a\rangle\right)\\
&=\liminf_{k\to\infty}\left(\dfrac{\Vert b-x_{1k}\Vert}{\Vert b-a\Vert}\left(f(b)-f(a)-\tilde\epsilon_k(\Vert b-a\Vert+\vert f(b)-f(a)\vert)\right)\right)\\
&=\dfrac{\Vert b-c\Vert}{\Vert b-a\Vert}(f(b)-f(a)).
\end{align*}
Thus,   inequality \eqref{thm33a-eq0} holds true with $x_{k}:=x_{1k}$ for all $k\in \N$. If $c\ne a,$ then we can assume that $\B(c,\sqrt{\epsilon_k})\cap [a,b]\subset (a,b)$ and that $x_{ik}=t_{ik}a+(1-t_{ik})b$, $t_{ik}\in (\sqrt{\epsilon_k},1-\sqrt{\epsilon_k})$ for $i=1,2.$ This implies that
\[
a-x_{ik}=(1-t_{ik})(a-b) \text{ \rm and } b-x_{ik}=t_{ik}(b-a) \text{ \rm for } i=1,2.
\]
Taking $x=a$ into \eqref{thm33a-eq1}, we have
\begin{align*}
&(1-t_{2k})\langle x_k^*,b-a\rangle=\langle -x_k^*,a-x_{2k}\rangle\\
&\le g(a)-g(x_{2k})+\tilde\epsilon_k(\Vert a-x_{2k}\Vert+\vert g(a)-g(x_{2k})\vert)\\
&= \dfrac{f(b)-f(a)}{\Vert b-a\Vert}\left(\Vert b-a\Vert-\Vert b-x_{2k}\Vert\right)(1+\tilde\epsilon_k) +\tilde\epsilon_k\Vert a-x_{2k}\Vert\\
&= \dfrac{f(b)-f(a)}{\Vert b-a\Vert}\left(\Vert b-a\Vert-t_{2k}\Vert b-a\Vert\right)(1+\tilde\epsilon_k)+\tilde\epsilon_k\Vert a-x_{2k}\Vert\\
& = (1-t_{2k})(f(b)-f(a))(1+\tilde\epsilon_k)+\tilde\epsilon_k\Vert a-x_{2k}\Vert.
\end{align*}
It follows that \begin{equation}\label{thm33a-eq02}
\limsup_{k\to\infty}\langle x_k^*, b-a\rangle \le f(b)-f(a).
\end{equation}
Take $x-b$ into \eqref{thm33a-eq1}, we derive that
\begin{align*}
&-t_{2k}\langle x_k^*,b-a\rangle=\langle -x_k^*,b-x_{2k}\rangle\\
&\le g(b)-g(x_{2k})+\tilde\epsilon_k(\Vert b-x_{2k}\Vert+\vert g(b)-g(x_{2k})\vert)\\
&= \dfrac{f(b)-f(a)}{\Vert b-a\Vert}\left(-\Vert b-x_{2k}\Vert\right)+\tilde\epsilon_k(\Vert b-x_{2k}\Vert+\vert g(b)-g(x_{2k})\vert)\\
&= \dfrac{f(b)-f(a)}{\Vert b-a\Vert}\left(-t_{2k}\Vert b-a\Vert\right)(1+\tilde\epsilon_k)+\tilde\epsilon_k\Vert b-x_{2k}\Vert\\
& = -t_{2k}(f(b)-f(a))(1+\tilde\epsilon_k) +\tilde\epsilon_k\Vert b-x_{2k}\Vert,
\end{align*} which implies that
$$
\langle x_k^*,b-a\rangle\ge (f(b)-f(a))(1+\tilde\epsilon_k)-\frac{\tilde\epsilon_k}{\sqrt{\epsilon_k}}\Vert b-x_{2k}\Vert.
$$
Passing to the infimum limit as $k\to\infty,$ we conclude $\liminf_{k\to\infty}\langle x_k^*,b-a\rangle\ge f(b)-f(a)$, which together with
  \eqref{thm33a-eq02}  obtain following equality
$$\lim_{k\to\infty}\langle x_k^*,b-a\rangle\ge f(b)-f(a),$$
which means that \eqref{thm33a-eq000} holds.
\end{proof}
%%
%%%%
%%%%%%
\begin{cor}[Mean value inequality for Lipschitzian functions]\label{MVITheorem}
Let $f: X\to \bar{\R}$ be locally Lipschitz continuous relative to $[a,b]$ around any $x\in [a,b]$, where $a\in X$ and $b\in X.$  Then,
\begin{equation}\label{MVI-theorem-1}
\langle x^*, b-a\rangle\ge f(b)-f(a)\; \text{ \rm for some } x^*\in \partial_{[a,b]}f(c), \; c\in [a,b).
\end{equation}
\end{cor}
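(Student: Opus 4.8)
The plan is to derive the statement from Theorem~\ref{thm33a}. First I would check its hypotheses for the present $f$ on the segment $[a,b]$ (which is compact, being the image of $[0,1]$ under $t\mapsto (1-t)a+tb$, and where we may assume $a\ne b$). Since $f$ is locally Lipschitz relative to $[a,b]$ around every point of $[a,b]$, it is finite on $[a,b]$ (in particular at $a$ and $b$) and continuous relative to $[a,b]$, hence properly lsc relative to $[a,b]$, so the standing assumptions of Theorem~\ref{thm33a} are met. Next I would produce a point $c\in[a,b)$ at which $\varphi(x):=f(x)+\frac{f(b)-f(a)}{\Vert b-a\Vert}\Vert x-b\Vert$ attains its minimum on $[a,b]$: the function $\varphi$ is continuous relative to the compact set $[a,b]$, so it attains its minimum at some $c_0\in[a,b]$; a direct computation gives the boundary identity $\varphi(a)=f(a)+\frac{f(b)-f(a)}{\Vert b-a\Vert}\Vert a-b\Vert=f(b)=\varphi(b)$, so if $c_0=b$ we may simply take $c:=a\in[a,b)$, and otherwise $c:=c_0\in[a,b)$.

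Then I would invoke Theorem~\ref{thm33a} with, say, $\epsilon_k:=1/k$, obtaining sequences $\tilde\epsilon_k\to 0^+$, $x_k\xrightarrow[f]{[a,b]}c$ and $x_k^*\in\widehat{\partial}^{\tilde\epsilon_k}_{[a,b]}f(x_k)$ with $\liminf_{k\to\infty}\langle x_k^*,b-a\rangle\ge f(b)-f(a)$ by \eqref{thm33a-eq00}. To pass to the limit I need $\{x_k^*\}$ bounded, and this is exactly where Lemma~\ref{lem31}(ii) enters: $f$ is locally Lipschitz relative to $[a,b]$ around $c$ with some modulus $\ell\ge 0$, so there is $\eta>0$ with $\Vert x^*\Vert\le\ell+\epsilon(1+\ell)$ for all $x^*\in\widehat{\partial}^{\epsilon}_{[a,b]}f(x)$, $x\in\B(c,\eta)\cap[a,b]$, $\epsilon\ge 0$; since $x_k\to c$ with $x_k\in[a,b]$, for all large $k$ we get $\Vert x_k^*\Vert\le\ell+\tilde\epsilon_k(1+\ell)$, which is bounded. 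By reflexivity of $X$ (weak* sequential compactness of $\B_{X^*}$), some subsequence $x_{k_j}^*\rightharpoonup^* x^*$ for some $x^*\in X^*$; passing, if necessary, to a further subsequence along which $\tilde\epsilon_{k_j}$ is strictly decreasing, the defining formula \eqref{prop1-eq2} for $\partial_{[a,b]}f(c)$ yields $x^*\in\partial_{[a,b]}f(c)$.

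Finally, weak* convergence gives $\langle x_{k_j}^*,b-a\rangle\to\langle x^*,b-a\rangle$, and since a subsequential limit is at least the liminf of the whole sequence, $\langle x^*,b-a\rangle\ge\liminf_{k\to\infty}\langle x_k^*,b-a\rangle\ge f(b)-f(a)$, which is \eqref{MVI-theorem-1} with this $c\in[a,b)$. The only genuinely delicate points are the reduction to a minimizer of $\varphi$ lying in $[a,b)$ rather than at $b$ (handled by the boundary identity $\varphi(a)=\varphi(b)$) and the uniform bound on $\{x_k^*\}$ needed to extract the weak* limit (handled by Lemma~\ref{lem31}(ii)); the rest is routine bookkeeping with the definitions.
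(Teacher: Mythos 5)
Your proof is correct and follows essentially the same route as the paper: invoke Theorem~\ref{thm33a} to produce $c$ and the sequences $x_k\xrightarrow[f]{[a,b]}c$, $x_k^*\in\widehat{\partial}^{\tilde\epsilon_k}_{[a,b]}f(x_k)$, bound $\{x_k^*\}$ via Lemma~\ref{lem31}(ii), extract a weak$^*$ limit $x^*\in\partial_{[a,b]}f(c)$, and pass to the limit in \eqref{thm33a-eq00}. The only difference is that you explicitly verify the hypothesis that $\varphi$ attains its minimum at some $c\in[a,b)$ (via the identity $\varphi(a)=\varphi(b)$), a detail the paper's proof leaves implicit.
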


\begin{proof}
Using Theorem~\ref{thm33a}, we find $c\in [a,b)$ and sequences $\epsilon_k\to 0^+,$ $x_k\xrightarrow[f]{[a,b]} c$ and $x_k^*\in \widehat{\partial}^{\epsilon_k}_{[a,b]}f(x_k)$ satisfying
\begin{equation}\label{MVI-theorem-eq1}
\liminf_{k\to\infty}\langle x_k^*, b -a\rangle\ge f(b)-f(a).
\end{equation}
Since $f$ is locally Lipschitz continuous relative to $[a,b]$ around $x_k$ for all $k\in\N,$ we see that $\{x_k^*\}$ is bounded due to Lemma~\ref{lem31}~(ii). Without loss of generality, we can assume that $x_k^*\rightharpoonup^* x^*$ for some $x^*\in X^*$ and $\epsilon_k\downarrow 0,$ which implies that $x^*\in \partial_{[a,b]}f(\bar x).$ Taking \eqref{MVI-theorem-eq1} into account, we have
$\langle x^*, b-a\rangle\ge f(b)-f(a),$
which means that \eqref{MVI-theorem-1} holds. Hence, the proof is complete.
\end{proof}

Corollary~\ref{MVITheorem} is a generalized version of \cite[Corollary~3.51]{Mor06} as the following example shows.

\begin{exam}
Consider the indicator function $\delta_{[0,1]}.$ Then $\delta_{[0,1]}$ is locally Lipschitz continuous relative to $[a,b]$ around any $x\in [a,b].$ By Corollary~\ref{MVITheorem}, we find $c\in [0,1)$ and $x^*\in \partial_{[0,1]}\delta_{[0,1]}(c)$ such that
\begin{equation}\label{example-MVItheorem-eq1}
\langle x^*, b-a\rangle\ge \delta_{[0,1]}(1)-\delta_{[0,1]}(0)=0.
\end{equation}
By direct computation, we can see that for any $c\in (0,1)$ and $x^*\in \partial_{[0,1]}\delta_{[0,1]}(c)$, the inequality \eqref{example-MVItheorem-eq1} holds.

However, since $\delta_{[0,1]}$ is not locally Lipschitz continuous on any open set containing $[0,1],$ \cite[Corollary~3.51]{Mor06} is not applicable  for $\delta_{[0,1]}$ on $[0,1].$
\end{exam}

It is known by \cite[Proposition 2.107]{Bonnans2000} that if $f:X\to \bar \R$ is convex and continuous at $x\in X$, then $f$ is locally Lipschitz around $x.$ A similar result for convex functions on a set is presented in the following proposition.

\begin{pro}\label{pro5}
Let $f:X\to \bar \R$ be convex on $[a,b]$ and continuous relative to $[a,b]$ with $b\ne a$, where $a\in X$ and $b\in X$.  Then $f$ is locally Lipschitz relative to $[a,b]$ around any $x\in [a,b].$
\end{pro}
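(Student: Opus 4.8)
The first move is to collapse the statement onto the line carrying the segment. Parametrize $[a,b]$ by $\gamma:[0,1]\to X$, $\gamma(t):=(1-t)a+tb$; since $\|\gamma(t)-\gamma(s)\|=|t-s|\,\|b-a\|$ and $b\ne a$, the map $\gamma$ is a bi-Lipschitz homeomorphism of $[0,1]$ onto $[a,b]$. Put $g:=f\circ\gamma$. Convexity of $f$ on $[a,b]$ makes $g$ convex on $[0,1]$, and continuity of $f$ relative to $[a,b]$ together with finiteness of $f$ on $[a,b]$ makes $g$ finite and continuous on $[0,1]$. Because $\gamma$ and $\gamma^{-1}$ are bi-Lipschitz, a Lipschitz estimate for $g$ on a relative neighborhood of $t_0$ in $[0,1]$ transports (with modulus scaled by $\|b-a\|^{\pm1}$) to a Lipschitz estimate for $f$ on a relative neighborhood of $\gamma(t_0)$ in $[a,b]$. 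Hence it suffices to show that a finite continuous convex function $g:[0,1]\to\R$ is locally Lipschitz relative to $[0,1]$ around every $t_0\in[0,1]$.

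For an interior point $t_0\in(0,1)$ this is classical, and I would either invoke \cite[Proposition~2.107]{Bonnans2000} applied on the affine line through $a$ and $b$, or argue directly with the three-chord inequality. Concretely, fix $0<\alpha'<\alpha<t_0<\beta<\beta'<1$ and a relative neighborhood of $t_0$ contained in $[\alpha,\beta]$; for any $s<t$ in $[\alpha,\beta]$ the monotonicity of slopes of a convex function yields
\begin{equation*}
\frac{g(\alpha)-g(\alpha')}{\alpha-\alpha'}\ \le\ \frac{g(t)-g(s)}{t-s}\ \le\ \frac{g(\beta')-g(\beta)}{\beta'-\beta},
\end{equation*}
and the two outer quotients are finite constants; so $g$ is Lipschitz on $[\alpha,\beta]$ with a modulus independent of $s,t$, which transports back to local Lipschitzness of $f$ relative to $[a,b]$ at $\gamma(t_0)$.

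The genuine difficulty is the endpoint case $t_0=0$ (and, symmetrically, $t_0=1$), where one arm of the three-chord estimate is lost and only the approach from inside the segment is available. The plan is again to use monotonicity of difference quotients: for $0<s<t<1$ convexity gives
\begin{equation*}
\frac{g(s)-g(0)}{s}\ \le\ \frac{g(t)-g(s)}{t-s}\ \le\ \frac{g(1)-g(t)}{1-t},
\end{equation*}
and, for fixed $\delta<1$, the right-hand side stays bounded as $t$ ranges over $[0,\delta]$ (it tends to $g(1)-g(0)$), so difference quotients on $[0,\delta]$ are bounded above. The one thing that can fail is a lower bound: $s\mapsto\frac{g(s)-g(0)}{s}$ is nondecreasing, so its infimum as $s\downarrow0$ is the inward one-sided derivative $g'_+(0)\in[-\infty,\,g(1)-g(0)]$, and one needs $g'_+(0)>-\infty$. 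Granting that, difference quotients on $[0,\delta]$ are two-sidedly bounded, hence $g$ is Lipschitz on $[0,\delta]$; combined with the interior case this handles $t_0=0$, and the mirror argument handles $t_0=1$, completing the proof after transporting through $\gamma$.

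The step I expect to be the main obstacle is precisely the finiteness of the inward slope $g'_+(0)$ of $g$ at the endpoint $0$ (equivalently, of $f$ at $a$): mere continuity relative to $[a,b]$ does not by itself rule out slopes blowing up to $-\infty$ at $a$, so this is where convexity on $[a,b]$ and the relative-continuity hypothesis have to be exploited jointly, and it is conceivable that the finiteness of $f$ at $a$ and $b$ or the standing assumptions of the section must be brought in to close it. Everything else in the argument is the routine convex-analysis bookkeeping sketched above.
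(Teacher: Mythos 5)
Your interior-point argument is correct, and your reduction to the scalar convex function $g=f\circ\gamma$ is a legitimate alternative to the paper's route: the paper works directly in $X$, using relative continuity to get $|f|\le M:=|f(x)|+1$ on $\B(x,\delta)\cap[a,b]$ and then writing $v=(1-t)u+t(u+y)$ with $\Vert y\Vert=\delta/2$ and $t=\tfrac{2}{\delta}\Vert v-u\Vert$ to extract a one-sided slope bound from convexity. Both are instances of the classical ``locally bounded convex implies locally Lipschitz'' scheme, so for interior points of the segment the two proofs are essentially equivalent in content.

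The endpoint obstruction you flag is a genuine gap, and in fact it cannot be closed, because the proposition as stated is false at the endpoints. Take $X=\R$, $[a,b]=[0,1]$ and $f(x)=-\sqrt{x}$ for $x\in[0,1]$ (extended by $+\infty$ outside): $f$ is convex on $[0,1]$ and continuous relative to $[0,1]$, yet $|f(t)-f(0)|/t=t^{-1/2}\to\infty$ as $t\downarrow 0$, so $f$ is not locally Lipschitz relative to $[0,1]$ around $0$. This is exactly the failure mode you isolated: $g'_+(0)=-\infty$. The paper's own proof commits the error you anticipated rather than resolving it: its auxiliary point $u+y$ lies on the line through $a$ and $b$ at distance $\delta/2$ from $u$, and when $u$ is within $\delta/2$ of an endpoint this point need not belong to $[a,b]$, so neither the convexity inequality $f(u+ty)\le(1-t)f(u)+tf(u+y)$ (convexity is assumed only on $[a,b]$) nor the bound $f(u+y)\le M$ is justified there. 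So your proposal proves everything that is actually true here; completing the endpoint case would require an additional hypothesis (for instance, convexity and continuity of $f$ relative to a strictly larger segment containing $[a,b]$ in its relative interior, or finiteness of the one-sided derivatives of $t\mapsto f((1-t)a+tb)$ at $t=0$ and $t=1$), not a missing idea.
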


\begin{proof}
Fix $x\in [a,b]$. We find, from the continuity relative to $[a,b]$, $\delta>0$ such that
$$\vert f(y)-f(x)\vert\le 1 \text{ \rm for any } y\in\B(x,\delta)\cap [a,b],$$ which follows that
$\vert f(y)\vert\le \vert f(x)\vert+1=:M$  for all  $y\in \B(x,\delta)\cap [a,b].$ For any $u\ne v\in \B(x, \frac{\delta}{4})\cap [a,b],$ we denote $y:=\frac{\delta}{2}\Vert  v-u\Vert^{-1}(v-u)$ and $t:=\frac{2}{\delta}\Vert v-u\Vert.$ One has $t\in [0,1]$ and $v=u+ty$, so
$$f(u+ty)=f((1-t)u+t(y+u))\le (1-t)f(u)+tf(u+y)\le f(u)+t2M,$$ which in turn yields that
$f(v)-f(u)\le \frac{M}{\delta}\Vert v-u\Vert.$ As $u$ and $v$ are arbitrarily taken in $\B(x, \frac{\delta}{4})\cap [a,b],$ one has
$\vert f(v)-f(u)\vert\le \frac{M}{\delta}\Vert v-u\Vert,$ which means that $f$ is locally Lipschitz relative to $[a,b]$ around $x.$
\end{proof}

The next theorem provides relationships between the convexity on a set of a mapping with the monotonicity of its relative subdifferentials.

\begin{thm}[Subdifferential monotonicity and convexity on an interval of lsc functions] \label{thm34} Let $f:X\to\bar{\R}$ be proper and continuous relative to a closed interval $[a,b]\subset \dom f$, where $a\in X$ and $b\in X$. Then
the following statements are equivalent:

{\rm (i)} $f$ is convex on $[a,b]$;

{\rm (ii)} $\partial f_{[a,b]}: X\to X^*$ is monotone;

{\rm (iii)} $\partial f_{[a,b]}: X\to X^*$ is monotone and $f$ is locally Lipschitz relative to $[a,b]$ around any $x\in [a,b]$;

{\rm (iv)} $\partial_{[a,b]}f: X\to X^*$ is monotone and $f$ is locally Lipschitz relative to $[a,b]$ around any $x\in [a,b].$
\end{thm}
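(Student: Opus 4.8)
The plan is to run the two short ``forward'' chains $(i)\Rightarrow(iii)\Rightarrow(ii)$ and $(i)\Rightarrow(iii)\Rightarrow(iv)$, and then close the equivalences with $(iv)\Rightarrow(i)$ and $(ii)\Rightarrow(i)$, which I would treat by one common scheme. For $(i)\Rightarrow(iii)$: if $f$ is convex on $[a,b]$ then $f_{[a,b]}$ is proper, lsc (since $f$ is continuous relative to the closed set $[a,b]$) and convex, so $\partial f_{[a,b]}$ coincides with its convex subdifferential and is therefore monotone (see, e.g., \cite{Mor06}), while Proposition~\ref{pro5} upgrades continuity relative to $[a,b]$ to local Lipschitz continuity relative to $[a,b]$ around every point. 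Then $(iii)\Rightarrow(ii)$ is trivial, and $(iii)\Rightarrow(iv)$ is immediate from the inclusion $\partial_{[a,b]}f(x)\subseteq\partial f_{[a,b]}(x)$ of Remark~\ref{rem31}(ii), which transports monotonicity from the larger graph to the smaller one while leaving the Lipschitz clause intact. Once $(ii)\Rightarrow(i)$ is available, Proposition~\ref{pro5} again returns the Lipschitz clause, giving $(ii)\Rightarrow(iii)$ and hence the equivalence of all four statements.

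For $(iv)\Rightarrow(i)$ --- and the same argument gives $(ii)\Rightarrow(i)$ with $\partial f_{[a,b]}$ in place of $\partial_{[a,b]}f$ --- I argue by contradiction. If $f$ is not convex on $[a,b]$, pick $u,v\in[a,b]$ and the affine $\ell$ interpolating $f$ at $u,v$ with $f-\ell$ positive somewhere on $[u,v]$; replacing $[u,v]$ by a connected component of $\{x\in[u,v]:f(x)>\ell(x)\}$ (same $\ell$), I may assume $h:=f-\ell\ge 0$ on $[u,v]$, $h(u)=h(v)=0$, $h>0$ on $(u,v)$, and $h$ attains its maximum $h(c)>0$ at some $c\in(u,v)$. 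Extend $\ell$ to $X$ so that $\nabla\ell=g_0:=\tfrac{f(v)-f(u)}{\|v-u\|^{2}}\,J(v-u)$, which lies on the line $\R\,J(v-u)=J(T(x,[u,v]))$ for $x$ in the relative interior of $[u,v]$. Minimize $\varphi_1(x):=h(x)+\tfrac{h(c)}{\|c-u\|}\|x-c\|$ over $[u,c]$ and $\varphi_2(x):=h(x)+\tfrac{h(c)}{\|c-v\|}\|x-c\|$ over $[c,v]$: each is lsc on a compact segment, so attains its minimum at some $c_1\in[u,c]$, $c_2\in[c,v]$, and $\varphi_i$ takes the common value $h(c)$ at the two endpoints of its segment. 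In the principal case $c_1\in(u,c)$, $c_2\in(c,v)$ the point $c_i$ is a local minimum of the $+\infty$-extension of $\varphi_i$ to $X$, so Fermat's rule, the exact Fr\'echet sum rule (the smooth summand $-\ell+\tfrac{h(c)}{\|c-u\|}\|\cdot-c\|$ is Fr\'echet differentiable at $c_i\ne c$, the norm being Fr\'echet differentiable off the origin), and the local coincidence near interior points of the profile mappings of $f_{[u,c]}$, $f_{[c,v]}$ and $f_{[a,b]}$ give $q_1:=g_0+\tfrac{h(c)}{\|c-u\|\,\|v-u\|}J(v-u)\in\widehat\partial_{[a,b]}f(c_1)$ and $q_2:=g_0-\tfrac{h(c)}{\|c-v\|\,\|v-u\|}J(v-u)\in\widehat\partial_{[a,b]}f(c_2)$ --- the alignment of $g_0$ with $J(v-u)$ being exactly what puts $q_i$ in $J(T(c_i,[u,v]))$, hence in $\widehat\partial_{[a,b]}f(c_i)\subseteq\partial_{[a,b]}f(c_i)$. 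Parametrizing $w(s)=(1-s)u+sv$ with $s_1<s_c<s_2$ the parameters of $c_1,c,c_2$, the $g_0$-terms cancel in $q_1-q_2=\tfrac{h(c)}{\|v-u\|}\big(\tfrac1{\|c-u\|}+\tfrac1{\|c-v\|}\big)J(v-u)$, so $\langle q_1-q_2,v-u\rangle>0$, whereas $c_1-c_2=(s_1-s_2)(v-u)$ with $s_1-s_2<0$; hence $\langle q_1-q_2,c_1-c_2\rangle<0$, contradicting monotonicity of $\partial_{[a,b]}f$ (resp.\ of $\partial f_{[a,b]}$). Since only genuine local minimizers are used, no Lipschitz hypothesis and no weak$^{*}$ limits of subgradient sequences enter, so the same computation also closes $(ii)\Rightarrow(i)$; alternatively the points $c_i$ and the estimates can be obtained from the mean value inequality of Corollary~\ref{MVITheorem}.

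The step I expect to be the main obstacle is the degenerate case in which one (or both) of the minimizers $c_i$ is forced to a segment endpoint --- equivalently $h$ lies above its own chord on $[u,c]$ or on $[c,v]$. The fix is to re-run the scheme on a strictly smaller subsegment abutting $c$; this recursion either terminates at an interior minimizer (and we are back in the principal case), or it stabilizes on a subsegment along which $h$, hence $f$, is concave, where the classical fact that a continuous concave function on an interval is differentiable off a countable set yields two interior points whose relative subgradients (singletons $\lambda\,J(v-u)$ with $\lambda$ nonincreasing along the segment) again violate the monotonicity inequality. Carrying out this case analysis carefully, together with the precise verification that the constructed $q_i$ lie in $\widehat\partial_{[a,b]}f(c_i)$ (which rests on the exact Fr\'echet sum rule and on $g_0\in J(T(c_i,[u,v]))$), is where most of the technical work sits; the rest is the bookkeeping indicated in the first paragraph.
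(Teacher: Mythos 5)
Your forward chain $(i)\Rightarrow(iii)\Rightarrow(ii)$, $(iii)\Rightarrow(iv)$ matches the paper (Proposition~\ref{pro5} plus the cited monotonicity characterization of convex lsc functions, and the inclusion of Remark~\ref{rem31}(ii)), and your principal-case computation for $(iv)\Rightarrow(i)$ is correct as far as it goes: the Fermat rule, the exact Fr\'echet sum rule with the smooth summand, the local coincidence of $f_{[u,c]}$ and $f_{[a,b]}$ near relative-interior points, and the membership $q_i\in\R\,J(v-u)=J(T(c_i,[a,b]))$ all check out, and $\langle q_1-q_2,c_1-c_2\rangle<0$ does contradict monotonicity. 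This is a genuinely different route from the paper, which never argues by contradiction: it uses its relative mean value theorem (Theorem~\ref{thm33a}) together with monotonicity to show that every $y^*\in\partial_{[a,b]}f(y)$ is a \emph{global} subgradient on $[a,b]$, and then invokes the nonemptiness result Theorem~\ref{theo-non} (this is exactly where the Lipschitz clause of (iv) is used) at $v=\lambda x+(1-\lambda)y$ to read off convexity; the equivalence $(i)\Leftrightarrow(ii)$ is simply outsourced to \cite[Theorem~3.56]{Mor06}.

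The genuine gap is the degenerate case you flag yourself, and your proposed repair does not close it. If the minimum of $\varphi_1$ over $[u,c]$ is attained only at the endpoints, then $h$ lies strictly above the chord joining $(u,0)$ to $(c,h(c))$; iterating on subsegments $[u',c]$ only accumulates the condition ``$h$ lies above every chord with right endpoint pinned at $c$,'' i.e. (parametrizing the segment) $\frac{h(y)-h(x)}{y-x}\ge\frac{h(c)-h(x)}{c-x}$ for $x<y<c$. This is only one of the three slope inequalities defining concavity and does not imply it: for instance $h(s)=s+\varepsilon(1-s)\psi(s)$ on $[0,1]$ with $c=1$ satisfies the pinned-chord condition for every nondecreasing $\psi\ge0$, and a suitable (non-concave-inducing) $\psi$ makes $h$ strictly convex on a subinterval. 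So the recursion need neither produce an interior minimizer nor stabilize on a concave piece, and the fallback via a.e.\ differentiability of concave functions is unavailable. This is not a removable technicality: for merely lsc (or merely relatively continuous) data, the possibility that penalized minimizers escape to the boundary is precisely the obstruction that forces the classical proofs of ``monotone subdifferential $\Rightarrow$ convex'' through an approximate mean value theorem of Zagrodny type. The paper has already built exactly that tool (Theorem~\ref{thm33a}, resting on the extremal-principle machinery of Theorem~\ref{thm33}), and its proof of $(iv)\Rightarrow(i)$ uses it; your sketch would be repaired most naturally by replacing the exact interior minimizers $c_1,c_2$ with the approximate minimizing sequences that Theorem~\ref{thm33a} supplies on $[u,c]$ and $[c,v]$, at the cost of reintroducing the weak$^*$ limits (and, for the passage from $\widehat\partial^{\epsilon_k}$ to $\partial_{[a,b]}f$, the Lipschitz hypothesis) that you hoped to avoid.
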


\begin{proof}  We first prove (i) $\Leftrightarrow$ (ii). Since $f$ is proper continuous relative to $[a,b]$ and convex on $[a,b],$ $f_{[a,b]}$ is proper, lsc, and convex, which is equivalent to the fact that $\partial f_{[a,b]}$ is monotone, due to \cite[Theorem~3.56]{Mor06}. Thus (i) $\Leftrightarrow$ (ii).

 From Proposition~\ref{pro5} and  fact  (i) $\Leftrightarrow$ (ii), one has (i) $\Leftrightarrow$ (iii). To complete the proof, one proves that (iii) $\Rightarrow$ (iv) $\Rightarrow$ (i).

Note that (iii) $\Rightarrow$ (iv) is obvious from the definition. It remains to prove (iv) $\Rightarrow$ (i). To see this, we need to prove that
\begin{equation}\label{thm34-eqa1}
\partial_{[a,b]}f(u)\subset\left\{u^*\in X^*\mid \langle u^*, x-u\rangle\le f(x)-f(u)\; \forall x\in [a,b]\right\}
\end{equation}
for any $u\in [a,b].$ Let $x,y\in [a,b]$ and $y^*\in \partial_{[a,b]}f(y).$ By Theorem~\ref{thm33a}, we find  $c\in [x,y)$ and sequences $\epsilon_k\to 0^+,$ $x_k\xrightarrow{[a,b]} c$ and $x_k^*\in \widehat{\partial}^{\epsilon_k}_{[a,b]}f(x_k)$ such that
\begin{equation}\label{thm34-eqa0a}
\frac{\Vert y-x\Vert}{\Vert y-c\Vert}\liminf_{k\to\infty}\langle x_k^*, y-x_k\rangle \ge f(y)-f(x).
\end{equation}
Since $f$ is locally Lipschitz relative to $[a,b]$ around $x_k,$ without loss of generality, we can assume that $x_k^*\rightharpoonup^* x^*$, which implies from \eqref{thm34-eqa0a} that $x^*\in \partial_{[a,b]}f(c)$ and
\begin{equation}\label{thm34-eqa0b}
\frac{\Vert y-x\Vert}{\Vert y-c\Vert}\langle x^*, y-c\rangle \ge f(y)-f(x).
\end{equation}
From the monotonicity of $\partial_{[a,b]}f$, we have $\langle x^*-y^*, c-y\rangle\ge 0,$  which is equivalent to
$\langle x^*, y-c\rangle\le \langle y^*,y-c\rangle.$ Taking  \eqref{thm34-eqa0b} into account and $\Vert y-c\Vert(y-x)=\Vert y-x\Vert(y-c)$, we obtain
$$\langle y^*, y-x\rangle=\frac{\Vert y-x\Vert}{\Vert y-c\Vert}\langle y^*, y-c\rangle\ge \frac{\Vert y-x\Vert}{\Vert y-c\Vert}\langle x^*, y-c\rangle \ge f(y)-f(x),$$
which yields that $\langle y^*, x-y\rangle \le f(x)-f(y)$ for all $x\in [a,b].$ Thus \eqref{thm34-eqa1} holds.

We continue to prove that $f$ is convex on $[a,b].$ Let $x,y\in [a,b]$ and $\lambda\in (0,1).$ Put $v:=\lambda x+(1-\lambda)y\in (a,b)$. Then $f$ is locally Lipschitz relative to $[a,b]$ around $v$.  In view of  Theorem~\ref{theo-non}, we can find $x^*\in\partial_{[a,b]}f(v)$. From \eqref{thm34-eqa1}, we have
$$f(y)-f(v)\ge\langle x^*, y-v\rangle \text{ \rm and } f(x)-f(v)\ge \langle x^*, x-v\rangle,
$$ so
$$(1-\lambda) f(y)+\lambda f(x)\ge (1-\lambda) f(v)+\langle x^*, (1-\lambda) y-(1-\lambda) v\rangle+\lambda f(v)+\langle x^*, \lambda x-\lambda v\rangle=f(v).$$
The proof of theorem is complete.
\end{proof}

\begin{remark}
Since the convexity of $f:X\to\bar\R$ on $[a,b]$  is equivalent to the monotonicity of $\partial f_{[a,b]}$, we see that the convexity of $f$ entails the monotonicity of $\partial_{[a,b]}f$. However, the opposite implication is another case because $\partial_{[a,b]}f(x)$ is actually contained in $\partial f_{[a,b]}(x)$ for $x\in X$. In many cases, $\partial_{[a,b]}f(x)$ for $x\in X$ is only a singleton, while $\partial f_{[a,b]}(x)$ could be a large set as we can see in the following  example.
\end{remark}

\begin{exam}\label{example34}
Consider the function $f$ defined by $f(x):=\frac{1}{3}x^3$ for $x\in\R.$ Then $f$ is convex on $[0,1].$ It is clearly that $f$ is locally Lipschitz relative to $[0,1]$ around any $x\in [0,1].$ Moreover, for any $x\in [0,1],$
$$\partial f_{[0,1]}(x):=\begin{cases}
(-\infty,0]&\text{ \rm if } x=0;\\
\{x^2\}& \text{ \rm if } x\in (0,1);\\
[1,\infty)& \text{ \rm if } x=1;\\
\emptyset& \text{ \rm otherwise},
\end{cases} \; \text{ \rm and }\;  \partial_{[0,1]}f(x):=\begin{cases}
\{x^2\}& \text{ \rm if } x\in [0,1);\\
\emptyset& \text{ \rm otherwise}.
\end{cases}$$
It is clear that $\gph\partial_{[0,1]}f\subsetneq \gph\partial f_{[0,1]}$ and   checking the monotonicity of $\partial_{[0,1]}f$ is much easier than   checking the monotonicity of $\partial f_{[0,1]}.$
\end{exam}

\section{Concluding Remarks}

In this paper, we  introduced new notions of relative subdifferentials for a function in reflexive Banach spaces. We proved that, under the locally Lipschitz continuity relative to  a set of a function around a reference point, the relative limiting subdifferential is not empty at this point.  We also provided the fuzzy sum rule for relative $\epsilon$-regular subdifferentials and the exact sum rule for the relative limiting subdifferentials. We then employed the relative subdifferentials  to derive  necessary optimality conditions for non-subdifferential sum optimization problems. Numerical examples were given to illustrate that the  optimality conditions obtained in this paper work better and sharper than some earlier results do.

With the help of the relative subdifferentials, we established new mean value theorems under very mild assumptions on the relative lsc and relative Lipschitz continuity and provided characterizations of the convexity on a set of a function through the monotonicity of the proposed relative subdifferentials. It would be interesting to see how the  relative subdifferentials can be applied to derive optimality principles for solving  wider classes of nonsmooth optimization problems including problems involving data uncertainties.


\begin{thebibliography}{99}%in appearance order.



\bibitem{ACL1995}
{\sc D.~Aussel, J.~N. Corvellec, and M.~Lassonde}, {\em Mean value property and
  subdifferential criteria for lower semicontinuous functions}, Trans. Amer. Math. Soc. 347 (1995), 4147-4161.

\bibitem{Bonnans2000}
{\sc J.~F. Bonnans and A.~Shapiro}, {\em Perturbation Analysis of Optimization Problems}, Springer-Verlag, New York,   2000.

\bibitem{CLSW98}
{\sc E.~H. Clarke, Y.~S. Ledyaev, R.~J. Stem, and R.~R. Wolenski}, {\em   Nonsmooth Analysis and Control Theory}, Springer-Verlag, New York, 1998.

\bibitem{Clarke1983}
{\sc F.~H. Clarke}, {\em Optimization and Nonsmooth Analysis},  Wiley, New York, 1983.

\bibitem{CL1994}
{\sc F.~H. Clarke and Y.~S. Ledyaev}, {\em Mean value inequality in Hilbert spaces}, Trans. Amer. Math. Soc. 344 (1994), 307-324.

\bibitem{DNV2010}
{\sc N.~Dinh, T.~T.~A. Nghia, and G.~Vallet}, {\em A closedness condition and its applications to dc programs with convex constraints}, Optimization, 59  (2010), 541-560.

\bibitem{Hir1980}
{\sc J.~B. Hiriart-Urruty}, {\em A note on the mean-value theorem for convex functions}, Boll. Unione Mat. Ital. 17B (1980), 765–775.

\bibitem{HT1999}
{\sc R.~Horst and N.~V. Thoai}, {\em DC programming: Overview}, J. Optim. Theory Appl. 103 (1999), 1-43.

\bibitem{Lebourg1975}
{\sc G.~Lebourg}, {\em Valeur moyenne pour un gradient generalise}, C. R. Acad. Bulgare Sci. 281 (1975), 795-797.

\bibitem{Mor1988}
{\sc B.~S. Mordukhovich}, {\em Approximation Methods in Problems of Optimization and Control}, Nauka, Moscow,  1988.

\bibitem{Mor93}
{\sc B.~S. Mordukhovich}, {\em Complete characterization of openness, metric   regularity, and lipschitzian properties of multifunctions}, Trans. Amer. Math. Soc.  340 (1993), 1-35.

\bibitem{Mor94}
{\sc B.~S. Mordukhovich}, {\em Generalized differential calculus for nonsmooth  and set-valued mappings}, J. Math. Anal. Appl.
  183 (1994), 250-288.

\bibitem{Mor06}
{\sc B.~S. Mordukhovich}, {\em Variational Analysis and Generalized Differentiation I and II: Basic Theory and Applications}, Springer-Verlag,
  Berlin, 2006.

\bibitem{Mor2018}
{\sc B.~S. Mordukhovich}, {\em Variational Analysis and Applications}, Springer,  Cham, 2018.

\bibitem{MWY2023b}
{\sc B.~S. Mordukhovich, B.~Wu, and X.~Yang}, {\em Relative well-posedness of constrained systems with applications to variational inequalities},  (2023),
  \url{https://arxiv.org/pdf/2212.02727.pdf}.

\bibitem{MWY2023a}
{\sc B.~S. Mordukhovich, B.~Wu, and X.~Yang}, {\em Stability criteria and calculus rules via conic contingent coderivatives in Banach spaces},  (2023),
  \url{https://arxiv.org/pdf/2401.04907v1.pdf}.

\bibitem{Penot1997}
{\sc J.~P. Penot}, {\em Mean-value theorem with small subdifferentials},  J. Optim. Theory Appl. 94 (1997),  209–221.

\bibitem{Penot1988}
{\sc J.~P. Penot}, {\em On the mean-value theorem}, Optimization, 19 (1998), 147-156.

\bibitem{PS1997}
{\sc J.~P. Penot and P.~H. Sach}, {\em Generalized monotonicity of  subdifferentials and generalized convexity}, J. Optim. Theory Appl. 94 (1997), 251–262.

\bibitem{TQY01-ZJNU}
{\sc X.~Qin, V.~D. Thinh, and J.~C. Yao}, {\em New generalized differentials  with respect to a set and their applications}, Set-Valued Var. Anal.  (2024).
\newblock (DOI: 10.1007/s11228-024-00729-z).

\bibitem{Roc96}
{\sc R.~T. Rockafellar}, {\em Convex Analysis}, Princeton University Press,  Princeton,   1996.

\bibitem{Roc98}
{\sc R.~T. Rockafellar and R.~B. Wet}, {\em Variational Analysis}, Springer,  Berlin, 1998.

\bibitem{DCtao-an-97}
{\sc P.~D. Tao and L.~T.~H. An}, {\em Convex analysis approach to d.c.   programming: theory, algorithms and applications}, Acta Math. Vietnam. 22 (1997), 289-355.


\bibitem{TZ1995}
{\sc L.~Thibault and D.~Zaorodny}, {\em Integration of subdifferentials of lower semicontinuous functions on banach spaces}, J. Math. Anal. Appl. 189 (1995), 33-58.

%\bibitem{TQY03-ZJNU}{\sc V.~D. Thinh, X.~Qin, and J.~C. Yao}, {\em A variational approach to  stability with respect to a set of single-valued and set-valued mappings},  ESAIM Control Optim. Calc. Var. (2024).\newblock (A revised version submitted).

\bibitem{TQY02-ZJNU}
{\sc V.~D. Thinh, X.~Qin, and J.~C. Yao}, {\em Formulas for calculating of
  generalized differentials with respect to a set and their applications},
  J. Optim. Theory Appl. (A revised version
  submitted).
\newblock arXiv:2311.08741 [math.OC].

\bibitem{Trang2012}
{\sc N.~T.~Q. Trang}, {\em A note on an approximate mean value theorem for  fr\'{e}chet subgradients}, Nonlinear Anal. 75 (2012),  380-383.

\bibitem{Wegge1974}
{\sc L.~L. Wegge}, {\em Mean-value theorem for convex functions}, J.  Math. Econom. 1 (1974), 207-208.

\bibitem{Zaorodny1988}
{\sc D.~Zaorodny}, {\em Appropriate mean-value theorem for upper  subderivatives}, Nonlinear Anal. 12  (1988),  1413-1428.

\bibitem{Zaorodny1990}
{\sc D.~Zaorodny}, {\em A note on the equivalence between the mean-value  theorem for the dini derivative and the clarke-rockafellar derivative},
  Optimization, 21 (1990),  179-183.



\end{thebibliography}
\end{document}